\newcommand{\bF}{\mathbb{F}}
\newcommand{\bN}{\mathbb{N}}
\newcommand{\bQ}{\mathbb{Q}}\newcommand{\bR}{\mathbb{R}}
\newcommand{\bZ}{\mathbb{Z}}
\newtheorem{theorem}{Theorem}[section]
\newtheorem{lemma}[theorem]{Lemma}
\newtheorem{proposition}[theorem]{Proposition}
\newtheorem{corollary}[theorem]{Corollary}
\theoremstyle{definition}
\newtheorem{definition}[theorem]{Definition}
\newtheorem{example}[theorem]{Example}
\newtheorem{remark}[theorem]{Remark}
\newtheorem{convention}[theorem]{Convention}
\newtheorem{digression}[theorem]{Discussion}
\newcommand{\Z}{\mathbb{Z}}
\newcommand{\Q}{\mathbb{Q}}
\newcommand{\F}{\mathbb{F}}
\newcommand{\R}{\mathbb{R}}
\newcommand{\FI}{\cat{FI}}
\newcommand{\cat}[1]{\mathsf{#1}}
\newcommand{\mr}[1]{{\rm #1}}
\newcommand{\fS}{\mathfrak{S}}
\newcommand{\Homz}[1]{\mr{Hom}_{\Z}(#1,\Z)}
\newcommand{\Homq}[1]{\mr{Hom}_{\Z}(#1,\Q)}
\newcommand{\Extz}[1]{\mr{Ext}^1_{\Z}(#1,\Z)}
\newcommand{\Diff}{\mr{Diff}}
\newcommand{\Homeo}{\mr{Homeo}}
\newcommand{\I}{\cat{I}}
\title{Representation stability for homotopy groups of configuration spaces}
\author{Alexander Kupers}
\thanks{Alexander Kupers is supported by a William R. Hewlett Stanford Graduate Fellowship, Department of Mathematics, Stanford University, and was partially supported by NSF grant DMS-1105058.}
\author{Jeremy Miller}
\date{\today}
\begin{document}

\begin{abstract}We prove that the dual rational homotopy groups of the configuration spaces of a $1$-connected manifold of dimension at least 3 are uniformly representation stable in the sense of \cite{CF}, and that their derived dual integral homotopy groups are finitely generated as $\cat{FI}$-modules in the sense of \cite{CEF}. This is a consequence of a more general theorem relating properties of the cohomology groups of a $1$-connected co-$\FI$-space to properties of its dual homotopy groups. We also discuss several other applications, including free Lie and Gerstenhaber algebras.\end{abstract} %% Just for papers exceeding 50 pages.

\maketitle

\tableofcontents

\section{Introduction}Let $F_k(M)$ denote the ordered configuration space of $k$ distinct points in a manifold $M$ and $C_k(M)$ the unordered configuration space. That is, we have 
\[F_k(M) = \{(m_1,m_2, \ldots, m_k) \,|\, m_i \neq m_j \text{ for } i\neq j\} \subset M^k \quad \text{ and } \quad  C_k(M)=F_k(M)/\fS_k\]
where in the second case the symmetric group on $k$ letters $\fS_k$ permutes the terms of the product. The goal of this paper is to prove a stability theorem for $\pi_i(C_k(M))$ for $i \geq 2$ in the case that $M$ is a $1$-connected (connected and simply connected) manifold of dimension at least $3$. 

\begin{convention}We will assume that our manifolds are of finite type. This is in particular the case for interiors of manifolds that admit a finite handle decomposition.\end{convention}

\subsection{Homological stability and lack of homotopical stability for configuration spaces}

One says that a sequence of spaces $X_k$ exhibits homological stability if the isomorphism types of the homology groups $H_i(X_k)$ are independent of $k$ for $k \gg i$. In \cite{Mc1}, McDuff showed that unordered configuration spaces of points in non-compact manifolds exhibit homological stability. In \cite{Ch}, this was generalized by Church to include closed  orientable manifolds if one works with rational coefficients. This result is especially surprising given the fact that there are no natural maps between unordered configuration spaces of points in a closed manifold. 

Do the homotopy groups also stabilize? Interpreted literally, the answer is no. For example, $\pi_1(C_k(\bR^2))$ is the braid group on $k$ strands and when $M$ is of dimension at least $3$, $\pi_1(C_k(M))$ is the wreath product $\pi_1(M) \wr \fS_k$. For higher homotopy groups, it follows from the work of Cohen in III.4 of \cite{CLM} that $\pi_{n-1}(C_k(\bR^n))  \cong \bZ^{k \choose 2}$ for $n>2$, so these similarly do not stabilize.

\subsection{Representation stability for rational homotopy groups of configuration spaces}

Note that although the fundamental groups do not stabilize, in dimensions at least $3$ they admit a uniform description in terms of symmetric groups. We will show that when $M$ is $1$-connected, not only does the fundamental group admit a uniform description, but so do the higher homotopy groups eventually. This description will be in terms of representations of symmetric groups. Indeed, if $M$ is $1$-connected and of dimension at least $3$, then the action of the fundamental group on the higher homotopy groups makes $\pi_n(C_k(M)) \otimes \bQ$ into a rational $\fS_k$-representation since now $\pi_1(C_k(M)) \cong \fS_k$.

In \cite{CF} Church and Farb defined representation stability for a sequence $V_k$ of rational $\fS_k$-representations. First one gives names to irreducible $\fS_k$-representations that do not depend on $k$. For example, the trivial and standard representations make sense for all $\fS_k$. This allows one to compare the multiplicities of $\fS_k$-representations for different $k$. One then says that the sequence $V_k$ has representation stability if the multiplicities of the irreducible subrepresentations of $V_k$ are eventually constant, see Definition \ref{defrepstab}.

Since the multiplicities can stabilize without the dimensions stabilizing, representation stability gives a notion of stability where traditional homological stability fails. A striking example of representation stability is Church's result that the cohomology groups of the ordered configuration space $F_k(M)$ have representation stability \cite{Ch}. Since then, representation stability has been proven for the cohomology of many other sequences of spaces \cite{Wi} \cite{JR} \cite{CEF}. One of the main results of this paper is the first result concerning representation stability for homotopy groups:

\begin{theorem}
\label{config}

Let $M$ be a $1$-connected manifold of dimension at least $3$. For $i \geq 2$, the dual rational homotopy groups $\Homq{\pi_i(C_k(M))}$ are representation stable with range $k \geq 4(i-1)$. If we additionally assume that $M$ is non-compact, this can be improved to $k \geq 2(i-1)$.

\end{theorem}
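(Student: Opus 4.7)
The plan is to bootstrap from Church's representation stability for the rational cohomology of the ordered configuration spaces $F_k(M)$, using the general theorem alluded to in the abstract that transfers representation stability from cohomology to dual rational homotopy groups for a $1$-connected co-$\FI$-space.

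First I would reduce the statement about $C_k(M)$ to one about $F_k(M)$. Since $M$ is $1$-connected of dimension at least $3$, iterating the Fadell--Neuwirth fibration $F_{k-1}(M \setminus \{p\}) \to F_k(M) \to M$ (together with the fact that removing a finite set from a $1$-connected manifold of dimension $\geq 3$ preserves simple-connectedness) shows that $F_k(M)$ is $1$-connected. The covering map $F_k(M) \to C_k(M)$ has deck group $\fS_k$ and, by $1$-connectedness of the total space, is the universal cover of $C_k(M)$. Hence $\pi_1(C_k(M)) \cong \fS_k$ and $\pi_i(C_k(M)) \cong \pi_i(F_k(M))$ for $i \geq 2$, the identification intertwining the $\fS_k$-action on higher homotopy groups coming from $\pi_1(C_k(M))$ with the one coming from deck transformations on $F_k(M)$.

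Next, the assignment $k \mapsto F_k(M)$ is a $1$-connected co-$\FI$-space, since an injection $\mathbf{m} \hookrightarrow \mathbf{k}$ induces a ``forget points'' map $F_k(M) \to F_m(M)$. Its rational cohomology groups $H^i(F_\bullet(M); \bQ)$ assemble into $\FI$-modules, and Church's theorem (refined by Church--Ellenberg--Farb) gives their representation stability, with sharper ranges when $M$ is non-compact. Feeding this input into the paper's general transfer theorem for $1$-connected co-$\FI$-spaces would yield representation stability of the dual rational homotopy groups $\Homq{\pi_i(F_\bullet(M))}$, which by the previous paragraph coincide with $\Homq{\pi_i(C_\bullet(M))}$ for $i \geq 2$.

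The main obstacle is the general transfer theorem itself. Concretely one needs a functorial rational homotopy model on co-$\FI$-spaces in which the dual rational homotopy groups appear as indecomposables; the natural candidate is a functorial minimal Sullivan model, or a bar/cobar construction expressing $\Homq{\pi_i(X)}$ as $\mr{Tor}$ or $\mr{Ext}$ of the rational cochain algebra of $X$. One then must verify that representation stability is preserved by these homological-algebra constructions and carefully track the degradation of the range. I expect the factors $2(i-1)$ and $4(i-1)$ to emerge from two sources: the shift of $i-1$ reflects that an element contributing to $\pi_i$ can be assembled from cohomological pieces of total degree roughly $i-1$ under the bar construction, and the coefficient $2$ versus $4$ mirrors Church's stability ranges for non-compact versus closed manifolds. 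The reduction via covering space theory and the appeal to cohomological representation stability are essentially formal; the homological-algebra range tracking is the delicate part.
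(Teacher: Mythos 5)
Your outline matches the paper's proof: reduce from $C_k(M)$ to $F_k(M)$ using $1$-connectedness of $F_k(M)$ (the paper cites the codimension-$\geq 3$ fat diagonal, equivalently your Fadell--Neuwirth argument), quote that the $\FI$-$\bQ$-module $H^i(F(M);\bQ)$ has weight and stability degree $\leq i$, and apply the cohomology-to-dual-homotopy transfer theorem, which the paper implements just as you guess via the Eilenberg--Moore (cobar) spectral sequence for $\Omega X$ together with the Milnor--Moore identification of $\Homq{\pi_{i+1}(X)}$ with indecomposables of $H^i(\Omega X;\bQ)$, tracking weight and stability degree through tensor products and spectral sequences. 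One small inaccuracy: the better range for non-compact $M$ does not come from a sharper cohomological input (the bounds $\leq i$ hold in both cases), but from the extra $\FI\#$-module structure on $H^*(F(M);\bQ)$ available when $M$ is non-compact (McDuff's ``bring in a point from infinity'' maps), under which stability degree collapses to weight, so that only the factor of $2$ from the bar construction is lost rather than the stability degree additionally degrading to $4(i-1)$.
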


Note that rational $\fS_k$-representations are self-dual in the sense that they are non-canonically isomorphic to their duals. Thus the theorem implies that the multiplicities of the irreducible representations in $\pi_i(C_k(M)) \otimes \bQ$ also stabilize. 

\subsection{$\FI$-modules and integral results}

It is natural to ask if a similar statement holds for the integral homotopy groups. Because the theory of $\bZ[\fS_k]$-modules is much more complicated than the theory of $\bQ[\fS_k]$-modules, there is no obvious analogue of the definition of representation stability for sequences of abelian groups with symmetric group actions. However, in \cite{CEF}, Church, Ellenberg and Farb introduced the machinery of $\FI$-modules to streamline proofs of representation stability results and gave a condition that does not involve naming irreducible representations, but implies representation stability when working over $\bQ$.

Let $\cat{FI}$ denote the category of finite sets and injective maps. For a ring $R$, an $\FI$-$R$-module is a functor from $\cat{FI}$ to the category of $R$-modules. Finite generation of an $\cat{FI}$-$R$-module means that a finite subset of its elements generates the entire $\cat{FI}$-$R$-module under the structure maps. The value of an $\FI$-$R$-module on $\{1,\ldots,k\}$ naturally comes with an action by $\fS_k$, because these are the automorphisms of the set $\{1,\ldots,k\}$. In this way, the data of an $\FI$-$\bQ$-module includes the data of a sequence of $\fS_k$-representations. If the $\FI$-$\Q$-module is finitely generated, Church, Ellenberg and Farb show that the sequence of representations exhibits  representation stability \cite{CEF}. 

Let us return to configuration spaces. Forgetting the ordering gives a $k!$-sheeted covering map $F_k(M) \to C_k(M)$ and hence $\pi_i(F_k(M)) \cong \pi_i(C_k(M))$ for $i \geq 2$. Moreover, when $M$ is a $1$-connected manifold of dimension at least $3$, the action of the fundamental group on the higher homotopy groups of $C_k(M)$ agrees with the $\fS_k$-action on $\pi_i(F_k(M))$ induced by the action of $\fS_k$ on the space $F_k(M)$ by permuting the ordering of the points. Therefore, Theorem \ref{config} can be rephrased as representation stability for the dual rational homotopy groups of $F_k(M)$. The spaces $F_k(M)$ naturally have the structure of a contravariant functor from $\FI$ to spaces and so their integral cohomology groups are $\FI$-$\bZ$-modules. %In fact, the definition of $\FI$-$\bZ$-module was designed to axiomatize the ``forget the $i$th point'' maps from $F_{k+1}(M)$ to $F_k(M)$. 

Because the theory of $\FI$-$\bZ$-modules is better understood than the theory of co-$\FI$-$\bZ$-modules, we follow the rational case by considering dual homotopy groups instead of homotopy groups. In the integral setting we should take derived duals, that is, consider both $\Homz{-}$ and $\Extz{-}$. This has the effect of separating the free and torsion parts. Due to base point issues, the dual homotopy groups of a co-$\FI$-space are not naturally an $\FI$-$\bZ$-module. However, when the co-$\FI$-space is $1$-connected (the value on each finite set is $1$-connected), the dual homotopy groups can uniquely be given the structure of $\FI$-$\bZ$-modules. The second main theorem of this paper is the following integral analogue of Theorem \ref{config}:

\begin{theorem}
Let $M$ be a $1$-connected manifold of dimension at least $3$. For $i \geq 2$, the $\FI$-$\bZ$-modules $\Homz{\pi_i(C_k(M))}$ and $\Extz{\pi_i(C_k(M))}$ are finitely generated. \label{configint}
\end{theorem}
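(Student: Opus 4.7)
My plan is to reduce Theorem \ref{configint} to the general co-$\FI$-space result promised in the introduction, and then sketch how that result would be proved. For $i \geq 2$ the covering map $F_k(M) \to C_k(M)$ induces isomorphisms $\pi_i(F_k(M)) \cong \pi_i(C_k(M))$, and since $M$ is $1$-connected of dimension at least $3$ each $F_k(M)$ is itself $1$-connected. Restriction of configurations along injections makes $k \mapsto F_k(M)$ into a $1$-connected co-$\FI$-space, and as in the discussion preceding the theorem this equips $\Homz{\pi_i(F_k(M))}$ and $\Extz{\pi_i(F_k(M))}$ with canonical $\FI$-$\bZ$-module structures. It therefore suffices to prove that these $\FI$-$\bZ$-modules are finitely generated.

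For this I would invoke the general theorem advertised in the abstract: for a $1$-connected co-$\FI$-space $X$, if each cohomology $\FI$-$\bZ$-module $H^j(X;\bZ)$ is finitely generated, then so are the derived dual homotopy $\FI$-$\bZ$-modules $\Homz{\pi_i(X)}$ and $\Extz{\pi_i(X)}$ for every $i \geq 2$. The hypothesis holds for $X = F_\bullet(M)$: finite generation of $H^*(F_k(M);\bZ)$ as an $\FI$-$\bZ$-module follows from \cite{CEF} and its subsequent extensions to the closed case.

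To prove the general theorem I would run an induction on $n$ using a Postnikov tower $\cdots \to X^{(n)} \to X^{(n-1)} \to \cdots \to X^{(2)}$ built levelwise in the category of co-$\FI$-spaces, where each $X^{(n)} \to X^{(n-1)}$ is a principal $K(\pi_n(X), n)$-fibration with $k$-invariant in $H^{n+2}(X^{(n-1)}; \pi_n(X))$. For the base case $n=2$, the dual Hurewicz theorem and universal coefficients identify $\Homz{\pi_2(X)} \cong H^2(X;\bZ)$ and realize $\Extz{\pi_2(X)}$ as a subquotient of $H^3(X;\bZ)$, both finitely generated by hypothesis. The inductive step uses the Serre spectral sequence of the fibration $K(\pi_n(X), n) \to X^{(n)} \to X^{(n-1)}$ and the known integral cohomology of Eilenberg-MacLane spaces to extract $\Homz{\pi_n(X)}$ and $\Extz{\pi_n(X)}$ as subquotients and extensions among the already-controlled groups. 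Finite generation then propagates through the resulting kernels, cokernels, and extensions thanks to Noetherianity of the category of finitely generated $\FI$-$\bZ$-modules.

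The main obstacle is organizing this Postnikov construction functorially across the co-$\FI$-diagram and controlling integral torsion. Rationally one could instead replace $X$ by a minimal Sullivan model and read off the dual homotopy groups directly, but over $\bZ$ one must simultaneously track $\Homz{-}$ and $\Extz{-}$, verify that the $k$-invariants and transgression differentials assemble into honest morphisms of $\FI$-$\bZ$-modules, and check that every extension problem produced by the Serre spectral sequence and the universal coefficient theorem remains within the finitely generated class.
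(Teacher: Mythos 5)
Your first two paragraphs correctly reproduce the paper's reduction: pass from $C_k(M)$ to $F_k(M)$ via the covering map, observe that $F(M)$ is a $1$-connected co-$\FI$-space (the paper works hard in Section~\ref{secbasepoints} to deal with basepoints, but the conclusion is the same), invoke finite generation of $H^*(F(M);\bZ)$ from \cite{CEFN}, and then feed this into the general cohomology-to-dual-homotopy theorem. This is exactly how the paper deduces Theorem~\ref{configint}.

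Your sketch of the general theorem, however, uses the Postnikov tower $P_\bullet(X)$, whereas the paper uses the \emph{Whitehead} tower $W_\bullet(X)$ for the direction you need (cohomology finitely generated $\Rightarrow$ dual homotopy finitely generated); the Postnikov tower is used by the paper only for the converse. This is not a cosmetic difference, and your version has a real gap. In the Serre spectral sequence for $K(\pi_n(X),n)\to P_n(X)\to P_{n-1}(X)$, the group $\Homz{\pi_n(X)}$ sits in $E_2^{0,n}$, but the differential $d_{n+1}$ hits $H^{n+1}(P_{n-1}(X))$, and the extension problems involve $H^{>n}(P_{n-1}(X))$ as well. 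Knowing that $H^*(X;\bZ)$ is a finitely generated $\FI$-$\bZ$-module only controls $H^j(P_{n-1}(X))$ for $j\le n-1$ (where the map $X\to P_{n-1}(X)$ is an isomorphism); the higher cohomology of a Postnikov stage is governed by $k$-invariants and is not directly controlled. The paper sidesteps this by inducting over the Whitehead tower: $W_{n-1}(X)$ is $(n-1)$-connected, so Hurewicz and universal coefficients give $H^n(W_{n-1}(X))\cong\Homz{\pi_n(X)}$ and exhibit $\Extz{\pi_n(X)}$ as a subgroup of $H^{n+1}(W_{n-1}(X))$, after which Proposition~\ref{propemfg} (finite generation for cohomology of Eilenberg--Mac Lane spaces) and the Serre spectral sequence of $K(\pi_n(X),n-1)\to W_n(X)\to W_{n-1}(X)$ propagate finite generation of $H^*(W_n(X))$ in \emph{all} degrees. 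To salvage your Postnikov version you would essentially have to first prove the converse implication (finitely generated dual homotopy $\Rightarrow$ finitely generated $H^*(P_n(X))$) and interleave the two inductions; at that point you would have reconstructed the paper's two-lemma structure.

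One smaller point: you attribute finite generation of $H^*(F(M);\bZ)$ to \cite{CEF}, but the integral statement for general (including closed) manifolds is Theorem~A / Application~2 of \cite{CEFN}, which relies on the Noetherianity of $\FI$-$\bZ$-modules proved there; \cite{CEF} alone gives the rational and non-compact cases.
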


If $M$ is non-compact, one can use Proposition \ref{propfishdualz} to deduce a similar statement about $\pi_i(C_k(M))$. See Remark \ref{remdim2} for the situation in dimension $\leq 2$. From this finite generation result, we can among other things deduce the following, using Propositions \ref{proptheoremc} and \ref{propfgtorsann}.

\begin{corollary} \label{intconfigcor} Let $M$ be a $1$-connected manifold of dimension at least $3$ and let $i \geq 2$. \begin{enumerate}[(i)]
\item There is a natural number $N_i$ such that multiplication by $N_i$ annihilates all of the torsion in $\pi_i(C_k(M))$ for all $k$.
\item For all fields $\bF$ there exists a polynomial $P_i$ such that the function $\dim_\bF(\pi_i(C_k(M)) \otimes \bF)$ is equal to $P_i(k)$ if $k$ is sufficiently large 
\end{enumerate}
\end{corollary}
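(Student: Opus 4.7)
The strategy is to combine Theorem \ref{configint} with the structure theorem for finitely generated abelian groups. Since $M$ is $1$-connected of dimension at least $3$, the spaces $F_k(M)$ are $1$-connected, and the finite-type hypothesis together with Serre's theorem imply that $\pi_i(F_k(M)) \cong \pi_i(C_k(M))$ is finitely generated for every $i \geq 2$. Writing such a group as $\bZ^r \oplus T$ with $T$ finite, we obtain non-canonical identifications $\Homz{\pi_i(C_k(M))} \cong \bZ^r$ and $\Extz{\pi_i(C_k(M))} \cong T$, so $\Extz{\pi_i(C_k(M))}$ captures exactly the torsion subgroup and $\Homz{\pi_i(C_k(M))}$ captures exactly the free part.

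For part (i), I would apply Proposition \ref{propfgtorsann} to the finitely generated $\FI$-$\bZ$-module $k \mapsto \Extz{\pi_i(C_k(M))}$ furnished by Theorem \ref{configint}. This produces a natural number $N_i$ annihilating every $\Extz{\pi_i(C_k(M))}$; combined with the identification just recalled, the same $N_i$ annihilates the torsion subgroup of $\pi_i(C_k(M))$ for every $k$.

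For part (ii), I would use the elementary identity
\[ \dim_\bF(A \otimes_\bZ \bF) = \dim_\bF(\Homz{A} \otimes_\bZ \bF) + \dim_\bF(\Extz{A} \otimes_\bZ \bF), \]
valid for any finitely generated abelian group $A$ and any field $\bF$, which follows immediately from the decomposition $A \cong \bZ^r \oplus T$. Taking $A = \pi_i(C_k(M))$, the two summands are the dimensions over $\bF$ of the values at $k$ of the finitely generated $\FI$-$\bF$-modules obtained by base-changing the $\FI$-$\bZ$-modules $\Homz{\pi_i(C_k(M))}$ and $\Extz{\pi_i(C_k(M))}$ along $\bZ \to \bF$. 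Proposition \ref{proptheoremc} applied to each of these $\FI$-$\bF$-modules gives eventually polynomial behaviour in $k$, and a sum of two eventually polynomial functions is again eventually polynomial.

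The main difficulty in this corollary sits entirely inside the cited inputs: the finite generation of the dual $\FI$-modules (Theorem \ref{configint}) and the $\FI$-module structural statements (Propositions \ref{proptheoremc} and \ref{propfgtorsann}). Given these, the argument above is essentially direct bookkeeping, and the only slight subtlety is to notice that one should tensor the fundamental $\FI$-$\bZ$-modules with $\bF$ (which automatically preserves finite generation) rather than attempt to extract the $p$-torsion of $\Extz{\pi_i(C_k(M))}$, which would require a Noetherianness argument.
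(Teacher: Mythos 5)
Your proof is correct and is essentially the intended one: both parts follow from Theorem \ref{configint} via the two propositions the paper cites, part (i) by applying Proposition \ref{propfgtorsann} to $\Extz{\pi_i(C_k(M))}$ (which is entirely torsion and abstractly isomorphic to the torsion subgroup of $\pi_i(C_k(M))$), and part (ii) by applying Proposition \ref{proptheoremc} to each of $\Homz{\pi_i(C_k(M))}$ and $\Extz{\pi_i(C_k(M))}$ and summing, using the elementary dimension identity you state. The paper leaves this unpacking implicit, and your account supplies exactly the bookkeeping that is meant.
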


\subsection{Other fundamental groups} 
It is natural to ask if these results also apply to the case of manifolds that are connected but not simply connected. In that case the fundamental groups of the unordered configuration spaces are no longer isomorphic to symmetric groups. In dimension at least $3$, the fundamental group is instead the wreath products of the symmetric group with the fundamental group of the manifold. These can be treated in a similar framework to $\FI$-modules by replacing $\FI$ with $\FI(G)$, a category introduced by Sam and Snowden in \cite{SS2}. In Section \ref{secG}, we prove a generalization of Theorem \ref{configint} addressing the case that the manifold has polycyclic-by-finite fundamental group. It is interesting to ask whether Theorem \ref{configint} can be generalized to all connected manifolds of dimension of at least $3$ and if so, what form this generalization takes.

\subsection{Relationship between finite generation for cohomology and dual homotopy groups of $\FI$-modules}

In \cite{CEFN}, Church, Ellenberg, Farb and Nagpal proved that the cohomology groups of ordered configuration spaces form finitely generated $\FI$-modules. We will deduce our results about homotopy groups from their results about cohomology groups by proving that a $1$-connected finitely generated co-$\FI$-space has finitely generated cohomology if and only if it has finitely generated dual (in the derived sense) homotopy groups. This is done using the main technical theorem of this paper, proven in Section \ref{secmain} (Theorems \ref{thmequivint} and \ref{thmequivrat} to be precise). The second half of it uses stability degree and weight, whose definition we will recall in Section \ref{secFI}. They should be thought of as different ways of making finite generation quantitative when working over the rationals.

\begin{theorem}Let $X$ be a $1$-connected co-$\FI$-space of finite type.
\label{thmequiv}
\begin{enumerate}[(i)]

\item If $H^i(X;\bZ)$ is a finitely generated $\FI$-$\bZ$-module for all $i$, then so are $\Homz{\pi_i(X)}$ and $\Extz{\pi_i(X)}$  for all $i$.
\item Conversely, if $\Homz{\pi_i(X)}$ and $\Extz{\pi_i(X)}$  are finitely generated $\FI$-$\bZ$-modules for all $i$, then so is $H^i(X;\bZ)$ for all $i$.

\end{enumerate}

It is possible to give explicit ranges if we work rationally. Fix $c \in \bQ$.
\begin{enumerate}[(i)]\setcounter{enumi}{2}
\item Suppose that $H^i(X;\bQ)$ has weight $\leq ci$ and stability degree $\leq ci$, then $\mr{Hom}_\bZ(\pi_i(X),\bQ)$ has weight $\leq 2c(i-1)$ and stability degree $\leq 4c(i-1)$.
\item Suppose $\mr{Hom}_\bZ(\pi_i(X),\bQ)$ has weight $\leq ci$ and stability degree $\leq ci$, then $H^i(X;\bQ)$ has weight $\leq ci$ and stability degree $\leq c(2i+1)$.
\end{enumerate}

\end{theorem}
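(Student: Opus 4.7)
The plan is to use a functorial Postnikov tower of $X$. Functorial constructions (for instance via simplicial small-object arguments) applied to the co-$\FI$-space $X$ yield co-$\FI$-spaces $X_n$ with principal fibrations $K(\pi_n(X), n) \to X_n \to X_{n-1}$ and Whitehead-cover fibrations $X\langle n\rangle := \mr{fib}(X \to X_{n-1}) \to X \to X_{n-1}$; the resulting Serre spectral sequences are spectral sequences of $\FI$-$\bZ$-modules. Together with the Noetherianity of finitely generated $\FI$-$\bZ$-modules \cite{CEFN}, closure of finite generation under tensor, $\mr{Tor}$, symmetric, exterior, and divided-power functors, and the standard Hurewicz and universal coefficient theorems, these form the main computational tool. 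For the rational parts (iii) and (iv), I would instead use the Sullivan minimal model $(\Lambda V, d)$ with $V^i \cong \Homq{\pi_i(X)}$, which gives cleaner degree-by-degree control on weight and stability degree.

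For direction (ii), induct on $n$ assuming $H^*(X_{n-1}; \bZ)$ is finitely generated. Cartan's computation of $H^*(K(A, n); \bZ)$ is functorial in $A$, expressing it via tensor, symmetric, exterior, and divided powers of $\Homz{A}$ and $\Extz{A}$; finite generation of these transfers to $H^q(K(\pi_n(X), n); \bZ)$. The $E_2$-page $H^p(X_{n-1}; H^q(K(\pi_n(X), n); \bZ))$ is then finitely generated via UCT and tensor/Tor closure, Noetherianity passes this to $H^*(X_n; \bZ)$, and since $X \to X_i$ is an $(i+1)$-equivalence, $H^i(X; \bZ) \cong H^i(X_i; \bZ)$. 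Part (iv) follows analogously from $H^i(X; \bQ)$ being a subquotient of $(\Lambda V)^i$, whose weight and stability degree are bounded via subadditivity of these invariants under tensor products.

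For direction (i), use the Whitehead cover $X\langle i\rangle$, which is $(i-1)$-connected with $\pi_i(X\langle i\rangle) \cong \pi_i(X)$. Hurewicz and UCT identify $\Homz{\pi_i(X)} \cong H^i(X\langle i\rangle; \bZ)$ and $\Extz{\pi_i(X)} \hookrightarrow H^{i+1}(X\langle i\rangle; \bZ)$, reducing to finite generation of these two cohomology groups. Induct on $i$: by hypothesis $\Homz{\pi_j(X)}$ and $\Extz{\pi_j(X)}$ are finitely generated for $j < i$, so (ii) applied to $X_{i-1}$ gives that $H^*(X_{i-1}; \bZ)$ is finitely generated. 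In the Serre spectral sequence for $X\langle i\rangle \to X \to X_{i-1}$, the $(i-1)$-connectivity of the fiber kills $E_2^{p, q}$ for $1 \leq q \leq i-1$ and the 1-connectivity of $X_{i-1}$ kills $E_2^{1, q}$, so only finitely many bidegrees contribute in total degrees $i$ and $i+1$. Combining the resulting short exact sequences with finite generation of $H^*(X; \bZ)$ (hypothesis) and $H^*(X_{i-1}; \bZ)$ (from (ii)), Noetherianity forces $H^i(X\langle i\rangle; \bZ)$ and $H^{i+1}(X\langle i\rangle; \bZ)$ to be finitely generated. For (iii), the Sullivan model determines $V^i$ via a short exact sequence $0 \to A_i \to V^i \to B_i \to 0$ with $A_i$ a quotient of $H^i(X; \bQ)$ (weight $\leq ci$) and $B_i$ a submodule of $(\Lambda V^{<i})^{i+1}$; any monomial in $(\Lambda V^{<i})^{i+1}$ is a product of at least two factors $V^j$ with $j < i$, so by induction and weight-subadditivity $\mr{weight}((\Lambda V^{<i})^{i+1}) \leq 2c(i-1)$, giving $\mr{weight}(V^i) \leq \max(ci, 2c(i-1)) = 2c(i-1)$ for $i \geq 2$. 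The stability-degree bound $4c(i-1)$ follows analogously.

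The main obstacle is the integral direction (i): running the Serre spectral sequence of the Whitehead cover in reverse to extract finite generation of fiber cohomology from that of total space and base. This requires controlling every differential into and out of the bidegrees $(0, i)$ and $(0, i+1)$; the $(i-1)$-connectivity of the fiber and 1-connectivity of $X_{i-1}$ eliminate all but finitely many contributing bidegrees, making the analysis tractable. A secondary technical issue is strict functoriality of the Postnikov and Whitehead tower constructions in the $\FI$-variable, handled by working with functorial simplicial models.
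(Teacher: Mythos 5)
Your integral argument for part~(i) is sound and genuinely different from the paper's. The paper proves (i) by inducting \emph{forward} along the Whitehead tower (Lemma~\ref{whitehead}): at each stage the Hurewicz isomorphism $\pi_n(X)\cong H_n(W_{n-1}(X))$ and the universal coefficient sequence express the cohomology of the Eilenberg--Mac\,Lane fiber $B^{n-1}\pi_n(X)$ purely in terms of the already-computed $H^*(W_{n-1}(X))$, so the dual homotopy groups never need to be known in advance; the Serre spectral sequence is then run in the usual direction. You instead feed the inductive hypothesis through part~(ii) to obtain finite generation of $H^*(X_{i-1})$, and then run the Serre spectral sequence for $X\langle i\rangle\to X\to X_{i-1}$ \emph{backward}, using Noetherianity to pry out $H^i$ and $H^{i+1}$ of the fiber. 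This works: with the stated connectivity the only bidegrees connected by differentials to $(0,i)$ and $(0,i+1)$ are $(i+1,0)$, $(2,i)$, and $(i+2,0)$. One should note, though, that the ordering is delicate --- $(0,i)$ must be settled before $(0,i+1)$, since $E_2^{2,i}=H^2(X_{i-1};H^i(X\langle i\rangle))$ needs to be known to be finitely generated before the differential $d_2\colon E_2^{0,i+1}\to E_2^{2,i}$ can be controlled. Part~(ii) as you propose it is essentially the paper's Lemma~\ref{lempostnikov}, modulo your appeal to Cartan in place of the paper's explicit Proposition~\ref{propemfg}.

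The rational parts~(iii)/(iv), however, have real gaps. First, the Sullivan minimal model is not a functorial construction: given $f\colon X\to Y$ one obtains a map of minimal models only after choices and only up to homotopy, so it is not clear how to produce a semistrict co-$\FI$-object $(\Lambda V,d)$ whose $V^i$ are $\FI$-$\bQ$-modules. This is exactly why the paper works with the Eilenberg--Moore spectral sequence, Milnor--Moore, and Quillen's dg Lie algebra functor $\lambda$, all of which are honestly natural in pointed maps (and can be made natural without base points, as discussed in Section~\ref{secbasepoints}). Second, the claim that ``the stability-degree bound $4c(i-1)$ follows analogously'' is unjustified. Stability degree is not preserved by submodules or quotients (so ``$A_i$ is a quotient of $H^i(X;\bQ)$'' does not by itself bound its stability degree; one needs a $\ker/\mathrm{im}$ presentation as in Proposition~\ref{propkerimrat}), and it is not subadditive under tensor products: the correct statement (Proposition~\ref{proptensorrat}) bounds the stability degree of $F\otimes G$ by $\max(r_1+s_1,\,r_2+s_2,\,s_1+s_2)$, where the $r_j$ are stability degrees and the $s_j$ are weights. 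The factor of~$4$ in the target $4c(i-1)$ is an artifact of this interplay between weight and stability degree and cannot be recovered from the one-line subadditivity argument that works for weight.
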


The above theorem can be applied to examples other than configuration spaces. In particular, by considering a wedge of spheres, we will be able to deduce representation stability for free Lie algebras and free Gerstenhaber algebras (see Section \ref{algebra}).

\subsection{Organization of the paper}

In Section \ref{secFI}, we review the relevant properties of $\FI$-modules established in \cite{CEF} and \cite{CEFN}, and prove a few results missing from the literature. In Section \ref{secbasepoints}, we discuss technical issues involving base points. In Section \ref{secmain}, we prove Theorem \ref{thmequiv}.  In Section \ref{secap}, we discuss applications, such as Theorems \ref{config} and \ref{configint}, as well as stability results for homotopy groups of certain subgroups of automorphisms groups of manifolds, and for free Lie and Gerstenhaber algebras. 

\subsection{Acknowledgements} We would like to thank Martin Bendersky, Thomas Church, S{\o}ren Galatius, Steven Sam and Jennifer Wilson for several helpful conversations.

\section{$\FI$-modules and their properties}
\label{secFI}

To prove Theorem \ref{thmequiv}, we will need to show that various constructions in algebraic topology preserve finite generation of $\FI$-$R$-modules. This will require understanding how finite generation and related concepts interact with taking tensor products, images, kernels and extensions. Much of this section is a summary of results of \cite{CEF} and \cite{CEFN}, and we recommend those for more details and exposition.

\subsection{$\FI$-$\bZ$-modules and finite generation} We start by recalling the definition of $\FI$-modules and finite generation. After that, we describe $\FI\#$-modules, which were also introduced in \cite{CEF} and capture the extra structure that the cohomology of configuration spaces of ordered points in open manifolds possess. We also look at the implications of finite generation and discuss constructions that preserve finite generation.

\begin{definition}Let $\FI$ be the category of finite sets and injections. For a ring $R$, an \emph{$\FI$-$R$-module} is a functor from $\FI$ to the category of $R$-modules.\end{definition}

For $F$ an $\FI$-$R$-module, let $F_k$ denote the value of $F$ on $[k] := \{1,\ldots k\}$.

\begin{definition}
An $\FI$-$R$-module $F$ is \emph{finitely generated} if there is a finite set $S$ of elements in $\bigoplus_{k=0}^\infty F_k$ so that no proper sub-$\FI$-$R$-module of $F$ contains $S$.
\end{definition}

\begin{example}

Let $F$ be the $\FI$-$R$-module whose value on a set $S$ is the free $R$-module on that set. Let $a \in F([1])$ be non-zero and $b \in F([2])$ be an element not fixed by the $\fS_2$ action. One can check that these are $\FI$-module generators of $F$. 
\end{example}

\subsubsection{Tensor products, quotients, submodules and extensions}

Finite generation is well-behaved with respect to tensor products, quotients, submodules and extensions, as summarized in the following table. 

\begin{table}[h]
\begin{center}
\begin{tabular}{@{}llllllll@{}} \toprule
    {} & \phantom{abc}  & {finite generation}  \\ \midrule
    tensor products  &   & preserved  \\
    quotients  &   & preserved   \\
    submodules & & preserved if $R$ is Noetherian \\
    extensions  &   & preserved   \\ \bottomrule
\end{tabular}\end{center}
%\vspace{\baselineskip}
%\caption{The behavior of finite generation under various operations.}
\label{tabfg}
\end{table}

In more detail, we have the following proposition, which combines Proposition 2.3.6 of \cite{CEF}, Theorem A of \cite{CEFN} and two easy arguments implicit in \cite{CEF}.

\begin{proposition}\label{propfgprops}\begin{enumerate}[(i)]
\item If $F$ and $G$ are finitely generated $\FI$-$R$-modules, then so is $F \otimes_R G$.
\item If $G$ is a finitely generated $\FI$-$R$-module, $F$ is an arbitrary $\FI$-$R$-module and $F \hookrightarrow G$ is a injective map of $\FI$-$R$-modules, then the quotient $\FI$-$R$-module $G/F$ is finitely generated.
\item Suppose $R$ is a Noetherian ring, $F$ a sub-$\FI$-$R$-module of an $\FI$-$R$-module $G$. If $G$ is finitely generated, so is $F$.
\item If an $\FI$-$R$-module $F$ has a finite filtration $0 \subset F_1 \subset \ldots \subset F_k = F$ by $\FI$-$R$-modules such that $F_i/F_{i-1}$ is finitely generated, then $F$ is finitely generated.
\end{enumerate}\end{proposition}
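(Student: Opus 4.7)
The plan is to handle the four parts in the order (ii), (iv), (iii), (i), since the first three are short and set up ideas used for the last. For (ii), if $\{g_1,\dots,g_n\}$ is a finite generating set for $G$, then their images generate $G/F$: the sub-$\FI$-$R$-module they generate in $G/F$ pulls back to an $\FI$-$R$-submodule of $G$ containing $\{g_i\}$ plus all of $F$, hence equal to $G$. For (iv) I would induct on the length of the filtration, reducing to a short exact sequence $0 \to F_{k-1} \to F_k \to F_k/F_{k-1} \to 0$; taking a finite generating set of $F_{k-1}$ together with lifts to $F_k$ of a finite generating set of $F_k/F_{k-1}$ produces a finite generating set for $F_k$ by a standard diagram chase. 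For (iii) I would simply cite Theorem~A of \cite{CEFN}: over a Noetherian ring, finitely generated $\FI$-$R$-modules form a Noetherian category, so submodules of finitely generated $\FI$-$R$-modules are finitely generated.

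The substantive item is (i). The natural approach is to reduce to the case of the ``free'' or ``principal projective'' $\FI$-$R$-modules $M(m)$, where $M(m)_S = R[\mathrm{Hom}_{\FI}([m],S)]$ is the free $R$-module on the set of injections from $[m]$ to $S$. These have the property that any finitely generated $\FI$-$R$-module is a quotient of a finite direct sum $\bigoplus_i M(m_i)$: to see this, pick a finite generating set and use the universal property of $M(m)$ that maps $M(m) \to F$ correspond to elements of $F_m$. Given surjections $\bigoplus_i M(m_i) \twoheadrightarrow F$ and $\bigoplus_j M(n_j) \twoheadrightarrow G$, tensoring yields a surjection $\bigoplus_{i,j} M(m_i) \otimes_R M(n_j) \twoheadrightarrow F \otimes_R G$, and combined with part (ii) this reduces the problem to showing that each $M(m) \otimes_R M(n)$ is finitely generated.

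For the latter, I would give the explicit combinatorial decomposition $M(m) \otimes_R M(n) \cong \bigoplus_{k = \max(m,n)}^{m+n} M(k)^{\oplus c_{m,n,k}}$, where the coefficients count pairs of injections $[m] \hookrightarrow [k]$ and $[n] \hookrightarrow [k]$ whose images jointly cover $[k]$. Indeed, a pair of injections from $[m]$ and $[n]$ into a set $S$ is equivalent to choosing a subset $T \subseteq S$ of size between $\max(m,n)$ and $m+n$ together with such a jointly surjective pair into $T$; the sum is finite because $k \leq m+n$. Since each $M(k)$ is finitely generated by one element in degree $k$, the direct sum is finitely generated, completing (i).

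The main obstacle I anticipate is making the tensor product decomposition in (i) fully rigorous as a natural isomorphism of $\FI$-$R$-modules rather than just a pointwise isomorphism; once the natural description in terms of pairs of injections is unpacked, however, everything else is formal. If preferred, I would instead simply cite Proposition~2.3.6 of \cite{CEF} for (i) and (iv), and Theorem~A of \cite{CEFN} for (iii), noting that (ii) is the elementary observation above.
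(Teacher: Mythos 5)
Your proposal is correct and takes essentially the same route as the paper, which gives no self-contained proof but simply observes that the statement ``combines Proposition~2.3.6 of \cite{CEF}, Theorem~A of \cite{CEFN} and two easy arguments implicit in \cite{CEF}'' --- precisely your (i), (iii), and the pair (ii), (iv); your reduction of (i) to the decomposition of $M(m)\otimes_R M(n)$ is how \cite{CEF} proves Proposition~2.3.6. One small slip: as written, your $c_{m,n,k}$ counts all jointly covering pairs of injections into $[k]$, whereas the multiplicity of $M(k)$ should be the number of $\fS_k$-orbits of such pairs (the action is free because a jointly covering pair determines the identity of $[k]$), i.e. $c_{m,n,k}/k!$; since both quantities are finite this does not affect the conclusion.
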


A direct consequence of this proposition is that finite generation is preserved by spectral sequences. More precisely, we have the following corollary.

\begin{corollary}\label{corssfg} Suppose that $\{E^{p,q}_r(k)\}_{k \geq 0}$ is a spectral sequence of $\FI$-$R$-modules converging to an $\FI$-$R$-module $F^{p+q}(k)$ such that each filtration on $E_\infty$ has finitely many non-zero terms. Suppose furthermore that $R$ is Noetherian. If there is an $r \geq 1$ so that $E^{p,q}_r$ is finitely generated for all $p,q$, then $F^{p+q}(k)$ is finitely generated. A similar result holds in a range.\end{corollary}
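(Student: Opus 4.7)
The plan is to prove the corollary by induction on the page of the spectral sequence, followed by one final extension argument.

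First I would fix $(p,q)$ and show by induction on $s \geq r$ that $E^{p,q}_s$ is a finitely generated $\FI$-$R$-module. The inductive step uses the standard subquotient description
\[E^{p,q}_{s+1} \;=\; \ker\bigl(d_s\colon E^{p,q}_s \to E^{p+s,q-s+1}_s\bigr)\,\big/\,\mathrm{im}\bigl(d_s\colon E^{p-s,q+s-1}_s \to E^{p,q}_s\bigr).\]
The numerator is a sub-$\FI$-$R$-module of the finitely generated $\FI$-$R$-module $E^{p,q}_s$, so by Proposition~\ref{propfgprops}(iii) (which needs the Noetherian hypothesis on $R$) it is itself finitely generated. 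The denominator is an image of $d_s$, hence a sub-$\FI$-$R$-module of the numerator, and Proposition~\ref{propfgprops}(ii) gives that the quotient $E^{p,q}_{s+1}$ is finitely generated.

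Second, I would pass from the finite pages to $E_\infty$. The assumption that each filtration on $E_\infty$ has finitely many non-zero terms, together with the boundedness implicit in the convergence of the spectral sequence, means that for each fixed $(p,q)$ there is an $s_0$ with $E^{p,q}_{s_0} = E^{p,q}_\infty$ as $\FI$-$R$-modules; finite generation of $E^{p,q}_\infty$ then follows from the previous step. (If one prefers, one can avoid invoking eventual stabilization and instead observe that $E^{p,q}_\infty$ is always a subquotient of $E^{p,q}_r$, and apply Proposition~\ref{propfgprops}(ii)--(iii) directly using Noetherianity.)

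Third, the convergence of the spectral sequence equips $F^n$ with a filtration whose associated graded in degree $n$ is $\bigoplus_{p+q=n} E^{p,q}_\infty$; by hypothesis this filtration has only finitely many non-zero terms. Iterated application of Proposition~\ref{propfgprops}(iv) to the successive filtration quotients then yields that $F^n$ is finitely generated. The ``range'' version is identical: all of Proposition~\ref{propfgprops} has obvious range-quantified versions, and one just has to keep track of how the bounds propagate through the finitely many subquotient and extension steps.

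The main potential obstacle is conceptual rather than computational: ensuring that the spectral sequence machinery (differentials, $E_\infty$, and the filtration on $F^n$) actually lives in the category of $\FI$-$R$-modules, and that the boundedness needed to identify some $E^{p,q}_s$ with $E^{p,q}_\infty$ holds uniformly in the $\FI$-structure. Noetherianity of $R$ plays the crucial role at the kernel step; without it, the inductive hypothesis would break down already in passing from $E_r$ to $E_{r+1}$.
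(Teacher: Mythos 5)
Your proposal is correct and spells out exactly the argument the paper leaves implicit when it calls the corollary ``a direct consequence'' of Proposition~\ref{propfgprops}: the page-by-page (or, more cleanly, the direct subquotient) argument uses parts (ii) and (iii) with the Noetherian hypothesis to get finite generation of each $E^{p,q}_\infty$, and part (iv) handles the finite filtration on $F^{p+q}$. The parenthetical observation that $E^{p,q}_\infty$ is always a subquotient of $E^{p,q}_r$ is the right way to avoid any issue with whether the spectral sequence degenerates at a finite page, and your identification of where Noetherianity enters is accurate.
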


\subsubsection{$\FI\#$-modules and the functor $H_0$}

In \cite{CEF}, Church, Ellenberg and Farb also introduced a more restrictive notion than $\FI$-modules, $\FI\#$-modules. This structure axiomatizes the interaction between McDuff's maps bringing in a particle from infinity \cite{Mc1}, and the  maps that forget the $i$th point. 

\begin{definition}

Let $\FI\#$ denote the category whose objects are finite sets and whose morphisms are defined as follows. For $S$ and $T$ finite sets, let $\mr{Hom}_{\FI\#}(S,T)$ be the set of triples $(A,B,\phi)$ with $A \subset S$, $B \subset T$ and $\phi: A \to B$ a bijection. Composition of morphisms is given by composition of functions where the domain and codomain are the largest possible making the composition a well defined bijection. For a ring $R$, an \emph{$\FI\#$-$R$-module} is a functor from $\FI\#$ to the category of $R$-modules.
\end{definition}

Note that $\FI\#$-$R$-modules are naturally $\FI$-$R$-modules, via the inclusion $\FI \hookrightarrow \FI\#$ given by the identity on objects and sending an injection $\sigma: S \to T$ to $(S,\sigma(S),\sigma)$.

\begin{definition}
For an $\FI$-$R$-module $F$, let $H_0(F)_k$ be the $R[\fS_k]$-module $F_k/\mr{span}(F_{<k})_k$. Here $\mr{span}(F_{<k})$ is the intersection of all $\FI$-$R$-submodules of $F$ containing all $F_i$ for $i <k$.  
\end{definition} 

Note that $H_0(F)$ is finitely generated if and only if $F$ was. The following construction from \cite{CEF} will be used in the proof of Proposition \ref{proptensorrat} and plays a significant role in the classification of $\FI\#$-$R$-modules.

\begin{definition}
For $V$ a $R[\fS_n]$-module, let $M(V)$ be the $\FI\#$-$R$-module given as follows: $M(V)_k = R[\mr{Hom}_\FI([n],[k])] \otimes_{R[\fS_n]} V$.\end{definition}

If $\{V_n\}_{n \geq 0}$ is a sequence of $R[\fS_n]$-modules, we define $M(\{V_n\}_{n \geq 0}) = \bigoplus_n M(V_n)$. The classification of $\FI\#$-$R$-modules in Theorem 4.1.5 of \cite{CEF} says every $\FI\#$-$R$-module is of this form, by showing that there is a natural isomorphism $M(H_0(F)) \cong F$. The classification implies Theorem 4.1.7 of \cite{CEF}, part of which says that an $\FI\#$-$\bZ$-module $F$ is finitely generated if and only if $F_n$ is generated as an abelian group by $O(n^d)$ elements.

The category $\FI\#$ is self-dual, i.e. there is an equivalence of categories $\eta: \FI\# \to \FI\#^\mr{op}$ given by sending an object $S$ to itself and a morphism $(A,B,\phi)$ to $(B,A,\phi^{-1})$. This can be used in combination with Theorem 4.1.7 of \cite{CEF} to prove the following duality property of finitely generated $\FI\#$-$\Z$-modules.

\begin{proposition}\label{propfishdualz} An $\FI\#$-$\Z$-module $F$ is finitely generated if and only if  $\Homz{F} \circ\eta$ and $\Extz{F} \circ \eta$ are finitely generated.\end{proposition}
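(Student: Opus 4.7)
The plan is to combine the classification of $\FI\#$-modules (the conjunction of Theorems 4.1.5 and 4.1.7 of \cite{CEF}) with a lemma about derived duality for abelian groups. By Theorem 4.1.7, an $\FI\#$-$\Z$-module $F$ is finitely generated iff $F_k$ is generated as an abelian group by $O(k^d)$ elements for some $d$. Since $\eta$ is the identity on objects, the $\FI\#$-modules $\Homz{F}\circ\eta$ and $\Extz{F}\circ\eta$ have underlying sequences $\Homz{F_k}$ and $\Extz{F_k}$ of abelian groups; the role of $\eta$ is only to convert the contravariance of derived duality into covariant $\FI\#$-functoriality. So the whole proposition reduces to comparing the minimal numbers of generators of $F_k$, $\Homz{F_k}$ and $\Extz{F_k}$ as abelian groups.

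For the forward direction, if $F$ is finitely generated then Theorem 4.1.7 gives $F_k \cong \Z^{a_k}\oplus T_k$ with $T_k$ finite and $a_k + \mr{rank}(T_k) = O(k^d)$. The groups $\Homz{F_k} \cong \Z^{a_k}$ and $\Extz{F_k} \cong T_k$ are then finitely generated abelian groups with at most $O(k^d)$ generators, and a second application of Theorem 4.1.7 shows that $\Homz{F}\circ\eta$ and $\Extz{F}\circ\eta$ are finitely generated.

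The backward direction reduces, via Theorem 4.1.7 applied to the hypothesis, to the following lemma: if $\Homz{V}$ and $\Extz{V}$ are both finitely generated abelian groups, then so is $V$, with number of generators bounded by the sum of those of $\Homz{V}$ and $\Extz{V}$. I would prove the lemma in four steps. First, $\Extz{V}$ finitely generated forces $V$ to be reduced, since any non-trivial divisible subgroup $D$ would split off (divisible abelian groups are injective) and contribute an uncountable summand (as $\Extz{\Q} = \hat{\Z}/\Z$ and $\Extz{\Z/p^\infty} = \Z_p$ are uncountable) to $\Extz{V}$. Second, the four-term $\mr{Ext}$ sequence identifies $\Extz{V_{\mr{tors}}}$ with the Pontryagin dual $V_{\mr{tors}}^*$, which is a compact Hausdorff abelian group; a compact Hausdorff abelian group is finite or uncountable, so being finitely generated forces it to be finite, whence $V_{\mr{tors}}$ is finite by Pontryagin duality. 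Third, for the torsion-free quotient $W = V/V_{\mr{tors}}$, which is reduced with both $\Homz{W}$ and $\Extz{W}$ finitely generated, the double-dual map $W \to \Homz{\Homz{W}} \cong \Z^a$ (where $a = \mr{rank}(\Homz{W})$) has image $W'$ a finitely generated free abelian group, so the short exact sequence $0 \to K \to W \to W' \to 0$ splits as $W \cong K \oplus W'$ with $\Homz{K} = 0$. Fourth, I show $K = 0$: since $K$ is torsion-free, the Pontryagin dual $K^*$ is compact connected and therefore divisible (compact connected abelian groups are divisible); then $\Extz{K}$ is a divisible quotient of $K^*$, and being also finitely generated must vanish; the four-term sequence now collapses to $\mr{Hom}(K,\Q) \cong K^*$, but for reduced $K \neq 0$ some quotient $K/nK$ is non-zero and yields a non-trivial $n$-torsion element of $K^*$, contradicting the torsion-freeness of $\mr{Hom}(K,\Q)$.

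The main obstacle is this last step showing $K = 0$; it is precisely where the joint assumption on both $\Homz{V}$ and $\Extz{V}$ is essential, using Pontryagin duality and the divisibility of compact connected abelian groups to rule out pathologies such as $\Z_p$ or $\Z[1/p]$ which satisfy $\Homz{V} = 0$ but have enormous, non-finitely-generated $\Extz{V}$.
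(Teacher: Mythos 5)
Your forward direction coincides with the paper's, which simply reads off the polynomial growth of the number of generators via Theorem 4.1.7 of \cite{CEF}. For the backward direction, however, your argument is substantially more complete than the paper's. The paper's one-line proof --- that $\Homz{F}_k$ has number of generators equal to the rank of $F_k$ and $\Extz{F}_k$ has number of generators equal to the number of generators of the torsion of $F_k$ --- only makes sense once $F_k$ is already known to be a finitely generated abelian group, which is exactly what needs to be proved in the backward direction. You have correctly isolated the missing abelian-group lemma (finite generation of $\Homz{V}$ and $\Extz{V}$ forces finite generation of $V$) and supplied a genuine proof of it via reducedness, Pontryagin duality for the torsion part, and the double-dual splitting $W \cong K \oplus W'$. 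In the paper's intended application the $F_k$ are homotopy groups of finite-type spaces and hence automatically finitely generated, which is likely why the authors did not prove the lemma; still, for the proposition as literally stated your argument is the one that closes the gap.

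One technical imprecision in your final step is worth fixing: $\Extz{K}$ is the cokernel of $\mr{Hom}(K,\Q) \to \mr{Hom}(K,\Q/\Z)$, so it is a quotient of $\mr{Hom}(K,\Q/\Z)$, not of the full Pontryagin dual $K^* = \mr{Hom}(K,\R/\Z)$; likewise the collapsed four-term sequence gives $\mr{Hom}(K,\Q)\cong\mr{Hom}(K,\Q/\Z)$ rather than $\cong K^*$. This does not break the argument: $\mr{Hom}(K,\Q/\Z)$ is divisible on its own (injectivity of $\Q/\Z$ together with injectivity of multiplication by $n$ on the torsion-free group $K$), so $\Extz{K}$ is divisible and finitely generated, hence zero, and your torsion contradiction from $K/pK$ goes through with $\mr{Hom}(K,\Q/\Z)$ in place of $K^*$. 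A cleaner route that avoids Pontryagin duality in this step altogether is to apply $\mr{Hom}_{\Z}(-,\Z)$ to $0\to K\xrightarrow{n} K\to K/nK\to 0$ and use $\mr{gl.dim}\,\Z=1$ to see multiplication by $n$ is surjective on $\Extz{K}$ for every $n\geq 1$; then $\Extz{K}$ is divisible, hence zero, and the identification $\mr{Hom}(K,\Q)\cong\mr{Hom}(K,\Q/\Z)$ forces $K/pK=0$ for all $p$, so $K$ is divisible and, being reduced, trivial.
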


\begin{proof}It suffices to remark that $\Homz{F}_k$ has number of generators equal to the rank of $F_k$ and $\Extz{F}_k$ has number of generators equal to the number of generators of the torsion part of $F_k$.\end{proof}

\subsubsection{Consequences of finite generation over $\Z$}

In this section we describe some concrete consequences of finite generation for $\FI$-$\Z$-modules. There are more, but we find the following two esthetically pleasing. We start with Theorem B of \cite{CEFN}. 

\begin{proposition}\label{proptheoremc} Let $F$ be a finitely generated $\FI$-$\Z$-module and $\bF$ a field, then for all except finitely many values of $k$, $\dim F_k \otimes \bF$ is equal to the value of a polynomial in $k$. If $F$ is $\FI\#$, this is true for all values of $k$.
 \end{proposition}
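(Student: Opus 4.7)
The statement splits into two parts that I would handle separately: the $\FI\#$ case (polynomial for all $k$) has a direct, self-contained proof, while the general $\FI$ case is exactly Theorem B of \cite{CEFN}, which I would cite.

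For the $\FI\#$ case, the plan is to invoke the classification theorem (Theorem 4.1.5 of \cite{CEF}) and read off the dimension formula. Finite generation of $F$ as an $\FI\#$-$\Z$-module implies, via Theorem 4.1.7 of \cite{CEF}, that $H_0(F)_n = 0$ for $n$ sufficiently large and that each $H_0(F)_n$ is a finitely generated abelian group. The classification then provides a natural isomorphism $F \cong \bigoplus_n M(H_0(F)_n)$ of $\FI\#$-modules. Since $\fS_n$ acts freely on $\mr{Hom}_\FI([n],[k])$ with $\binom{k}{n}$ orbits, tensoring with $\bF$ yields
\[\dim_\bF(F_k\otimes\bF) \;=\; \sum_n \binom{k}{n}\,\dim_\bF(H_0(F)_n\otimes\bF),\]
a polynomial in $k$ valid for all $k\geq 0$, of degree equal to the largest $n$ with $H_0(F)_n\otimes\bF \neq 0$.

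For the general $\FI$-$\Z$-module case, the eventual polynomiality is precisely Theorem B of \cite{CEFN}, which I would invoke directly. To indicate its flavor: the proof in \cite{CEFN} uses noetherianity (Theorem A of \cite{CEFN}) to surject $F$ from a finitely generated free $\FI\#$-module $P$, whose kernel is again finitely generated, and then runs an inductive argument using the shift functor. The $\FI\#$ case handles $\dim_\bF(P_k\otimes\bF)$, and a shift-and-difference argument shows that the correction contributed by the kernel agrees with a polynomial for all but finitely many $k$.

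The main obstacle is exactly this general $\FI$ case: without an $\FI\#$ structure one cannot produce a closed-form presentation, so one must extract polynomial growth from a resolution whose terms are only controlled via noetherianity. This is also the reason the exceptional set of $k$ cannot in general be eliminated, whereas the $\FI\#$ classification forces it to be empty.
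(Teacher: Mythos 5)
Your proposal matches the paper's own treatment: the paper gives no proof here and simply cites Theorem~B of \cite{CEFN}, exactly as you do for the eventual-polynomiality in the general $\FI$ case. Your added self-contained argument for the $\FI\#$ case via the classification $F\cong\bigoplus_n M(H_0(F)_n)$ and the count $\dim_\bF M(V)_k\otimes\bF=\binom{k}{n}\dim_\bF(V\otimes\bF)$ is correct; the only small quibble is that the facts ``$H_0(F)_n=0$ for $n\gg0$ and each $H_0(F)_n$ finitely generated'' follow directly from the definition of finite generation rather than needing Theorem~4.1.7 of \cite{CEF}.
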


The following is a consequence of Theorem C of \cite{CEFN}. 
 
\begin{proposition}\label{propfgtorsann} Let $F$ be a finitely generated $\FI$-$\Z$-module. There exists an integer $N \geq 1$ such that $N$ annihilates all of the torsion of $F_k$ for all $k \geq 0$.
\end{proposition}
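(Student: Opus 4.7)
The plan is to isolate the torsion as a sub-$\FI$-$\Z$-module, apply Noetherianity to reduce to finitely many generators, and then exploit the fact that $\FI$-module morphisms are $\Z$-linear to promote an element-wise annihilation statement to one that holds for the whole module.

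First, I would define the torsion sub-$\FI$-module $T(F)$ by setting $T(F)_k \subset F_k$ to be the $\Z$-torsion subgroup. This is indeed an $\FI$-$\Z$-submodule of $F$: for any morphism $f \colon [k] \to [l]$ in $\FI$ and any $x \in T(F)_k$ with $n x = 0$, we have $n \cdot F(f)(x) = F(f)(n x) = 0$, so $F(f)(x) \in T(F)_l$. Thus $T(F)$ inherits a well-defined $\FI$-$\Z$-module structure from $F$.

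Next, since $\Z$ is Noetherian, Proposition \ref{propfgprops}(iii) applies: the sub-$\FI$-$\Z$-module $T(F) \subset F$ is finitely generated. Choose a finite generating set $s_1, \ldots, s_m$ with $s_i \in T(F)_{k_i}$, and let $n_i$ be the (finite) additive order of $s_i$. Set
\[
N := \mr{lcm}(n_1, \ldots, n_m).
\]
By construction, $N s_i = 0$ for every $i$. Now, any $t \in T(F)_k$ can be written, using the $\FI$-$\Z$-module structure, as a finite $\Z$-linear combination $t = \sum_j a_j F(f_j)(s_{i_j})$ for suitable integers $a_j$ and $\FI$-morphisms $f_j \colon [k_{i_j}] \to [k]$. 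Since each $F(f_j)$ is a $\Z$-linear map, we obtain
\[
N t = \sum_j a_j F(f_j)(N s_{i_j}) = 0,
\]
so $N$ annihilates every $T(F)_k$, i.e.\ every torsion element of $F_k$ for all $k$.

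The only nontrivial input is Noetherianity of the category of $\FI$-$\Z$-modules, which is exactly the content of Theorem A of \cite{CEFN} (recorded in Proposition \ref{propfgprops}(iii)); everything else is formal from $\Z$-linearity of the structure maps. This is why the result can legitimately be advertised as a consequence of the deep finiteness theorems of \cite{CEFN}, even though the extraction itself is short. One could alternatively appeal directly to the more refined structural results of \cite{CEFN} on the torsion part of a finitely generated $\FI$-$\Z$-module, but the reduction above via Noetherianity is the most economical route.
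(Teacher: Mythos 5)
Your proof is correct, and it is actually simpler at the final step than the argument in the paper. Both proofs proceed identically through the first two moves: you form the torsion sub-$\FI$-$\Z$-module and invoke Noetherianity (Theorem A of \cite{CEFN}, via Proposition \ref{propfgprops}(iii)) to conclude it is finitely generated. Where the paper then appeals to the colimit description of finitely generated $\FI$-$\Z$-modules from Theorem C of \cite{CEFN} (expressing $F[\mathrm{tors}]_k$ as a colimit of abelian groups coming from small index sets, all annihilated by a common $N$), you instead argue directly from the definition of finite generation: the $\FI$-submodule generated by $s_1,\ldots,s_m$ has $k$-th piece equal to the $\Z$-span of all $F(f)(s_i)$ with $f$ an $\FI$-morphism into $[k]$, and since the structure maps are $\Z$-linear, the lcm $N$ of the orders of the $s_i$ kills every such element. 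This avoids Theorem C entirely, so your route is more elementary; it also makes explicit the bound $N = \mathrm{lcm}(n_1,\ldots,n_m)$ in terms of the generators. The paper's version has the mild advantage that the colimit description of Theorem C is a useful structural fact to have on record, but for this particular proposition your argument is the more economical one.
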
 
 
\begin{proof}Let $F[\mr{tors}]_k$ be the torsion subgroup of $F_k$. Since maps of abelian groups must send torsion elements to torsion elements $\{F[\mr{tors}]_k\}_{k \geq 0}$ is a sub-$\FI$-$\Z$-module of $F$. Because $F$ is finitely generated, Part (iii) of Proposition \ref{propfgprops} implies that $F[\mr{tors}]$ is finitely generated. Theorem C of \cite{CEFN} then implies that there exists an integer $M$ such that $F[\mr{tors}]_k$ can be written as a colimit of a diagram indexed by subsets of cardinality $\leq M$ of $\{1,\ldots,k\}$ of abelian groups isomorphic to $F[\mr{tors}]_i$ for $i \leq M$. Suppose that $N$ annihilates all $F[\mr{tors}]_i$ for $i \leq M$, then $N$ annihilates this colimit.
\end{proof}

\subsection{$\FI$-$\bQ$-modules, stability degree, weight and generation degree} In contrast with the integral case, working over $\bQ$ allows us to make quantitative statements. This is because, unlike $\bZ[\fS_k]$-modules, finite-dimensional $\bQ[\fS_k]$-modules can be completely classified. Quantitative statements can made in terms of one of three related concepts; weight (having to do with the complexity of the representations), stability degree (having to do with the multiplicity of the representations) and degree of generation (being a mix of both of these concepts).

The definition of stability degree will involve the functors $\Phi_q$, whose definition involves the coinvariants $V_G$, defined as the quotient of $V$ by the subspace generated by $v-g\cdot v$.

\begin{definition} For $F$ an $\FI$-$\bQ$-module and $k \geq 0$, let $\Phi_q(F)_k=(F_{k+q})_{\fS_k}$ and let $\Phi_q(F)=\bigoplus_{k \geq 0} \Phi_q(F)_k$. Let $T: \Phi_q(F)_k \to  \Phi_q(F)_{k+1}$ be the stabilization map induced by the standard inclusion of $[q+k]$ into $[q+k+1]$.
\end{definition}

To define weight, we recall that there is a bijective correspondence between irreducible rational $\fS_k$-representations and partitions of $k$. A partition of $k$ is a collection of integers $\lambda= (\lambda_1 \geq \ldots \geq \lambda_l >0)$ satisfying $\sum_{i=1}^l \lambda_i = k$. For $\lambda$ a partition of $k$, let $|\lambda|$ denote $k$.  These partitions can be visualized by Young tableaux. Our notation follows \cite{CF}:

\begin{definition}\label{defirrepnot} Let $\lambda$ be a partition of $k$. \begin{enumerate}[(i)]
\item  Let $V_\lambda$ denote the irreducible representation of $\fS_k$ corresponding to the partition $\lambda = (\lambda_1 \geq \ldots \geq \lambda_l > 0)$ of $k$.
\item For $n \geq k + \lambda_1$, let $V(\lambda)_n$ denote the irreducible representation of $\fS_n$ corresponding to the partition $(n-k \geq \lambda_1 \geq \ldots \geq \lambda_l > 0)$ of $n$.
\end{enumerate}
\end{definition}

The families of representations $V(\lambda)_n$ are often familiar. For example, $V(0)_n$ is the trivial representation and $V(1)_n$ is the standard representation, the $(n-1)$-dimensional subrepresentation of the permutation representation obtained by removing the trivial subrepresentation. We will now give the definition of weight, stability degree and degree of generation.

\begin{definition}Let $F$ be an $\FI$-$\Q$-module.
\begin{enumerate}[(i)]
\item We say $F$ has \emph{stability degree $\leq r$} if for all $q \geq 0$ and $k \geq r$ the map $T:\Phi_q(F)_k \to \Phi_q(F)_{k+1}$ is an isomorphism.
\item We say $F$ has \emph{weight $\leq s$} if for every $n \geq 0$ and every irreducible constituent $V(\lambda)_n$ of $F_n$, we have $|\lambda| \leq s$.
\item We say $F$ is \emph{finitely generated in degree $\leq d$} if there is a finite set $S$ of elements in $\bigoplus_{n=0}^d F_n$ so that no proper sub-$\FI$-$\Q$-module of $F$ contains $S$.
\end{enumerate} 
\end{definition}

The next table compiles results that have been proven in \cite{CEF}. Our goal in the remainder of this section is to recall precise statements for each of the entries and conditionally fill in the remaining gaps in Lemma \ref{lemkerimgendeg} and Proposition \ref{proptensorrat}.

\begin{table}[h]
\begin{center}
\begin{tabular}{@{}llllllll@{}} \toprule
    {} & \phantom{abc}  & {stability degree} & {weight} & {generated below degree} \\ \midrule
    tensor products  & &  & additive & additive \\
    $\ker/\mr{im}$  &   & preserved  & preserved &  \\
    extensions  &   & preserved  & preserved & preserved &  \\ \bottomrule
\end{tabular}\end{center}
%\vspace{\baselineskip}
%\caption{The behavior of stability degree, weight and degree of generation under various operations.}
\label{tabrat}
\end{table}

\subsubsection{Representation stability} To fill in the gaps in the previous table and describe a concrete consequence of finite generation, we recall the definition of representation stability. 

A collection $\{V_n\}_{n \in \bN}$ of rational $\fS_n$-representations and maps $\phi_n: V_n \to V_{n+1}$ is said to be \emph{consistent} if each $\phi_n$ is a $\fS_n$-equivariant map $V_n \to V_{n+1}$, viewing $V_{n+1}$ as a $\fS_{n}$-representation via the standard inclusion $\fS_n \to \fS_{n+1}$. An $\FI$-$\Q$-module $F$ naturally gives rise to a consistent sequence by setting $V_n = F([n])$ and $\phi_n: V_n \to V_{n+1}$ equal to the map induced by the standard injection $[n] \hookrightarrow [n+1]$.

\begin{definition}
\label{defrepstab}
A consistent sequence of rational $\fS_n$-representations $V_n$ is said to be \emph{uniformly representation stable} with range $n \geq N$ if it has the following properties for $n \geq N$:
\begin{enumerate}[(i)]
\item the maps $\phi_n: V_n \to V_{n+1}$ are injective,
\item the image $\phi_n(V_n)$ spans $V_{n+1}$ as a $\fS_{n+1}$-module,
\item for each partition $\lambda$ the multiplicity of $V(\lambda)_n$ in $V_n$ is independent of $n$.
\end{enumerate}

%It is said to be monotone with range $n \geq N$ if we have
%\begin{enumerate}[(i)] \setcounter{enumi}{3}
%\item Monotonicity: for any $n \geq N$ and each subrepresentation $W \subset V_n$ isomorphic to $V(\lambda)_n^{\oplus k}$ the $\fS_{n+1}$-span of $W$ contains $V(\lambda)_{n+1}^{\oplus k}$ as a subrepresentation.
%\end{enumerate}

\end{definition}

Here is the key lemma describing the interplay of stability degree, weight and degree of generation.

\begin{lemma}
\label{FIrepstab}

Let $F$ be an $\FI$-$\Q$-module. If $F$ has stability degree $\leq r$ and weight $\leq s$, then it is  uniformly representation stable with range $\geq r+s$ and generated in degree $\leq r+s$.
\end{lemma}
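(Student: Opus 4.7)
The plan is to deduce both the generation degree bound and the three conditions of representation stability from a Pieri-rule calculation relating the coinvariants $\Phi_q(F)_k$ to the multiplicities of the irreducibles $V(\lambda)_n$ appearing in $F_n$. By the weight hypothesis we may write $F_n \cong \bigoplus_{|\lambda|\le s} c_\lambda(n)\,V(\lambda)_n$ for $n$ large. For $\fS_k \subset \fS_{k+q}$ acting on the first $k$ letters, the Littlewood--Richardson rule identifies
\[
(V(\lambda)_{k+q})_{\fS_k} \;\cong\; \bigoplus_{\nu\vdash q,\; \lambda[k+q]/\nu \text{ a horizontal $k$-strip}} V_\nu
\]
as $\fS_q$-modules, where $\lambda[k+q]=(k+q-|\lambda|,\lambda_1,\ldots)$. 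For $k$ sufficiently large in terms of $\lambda$, this isomorphism class stabilizes to an $\fS_q$-module $\tau_q(\lambda)$ depending only on $\lambda$ and $q$, and summing over $\lambda$ gives $\Phi_q(F)_k \cong \bigoplus_{|\lambda|\le s} c_\lambda(k+q)\,\tau_q(\lambda)$.

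The stability-degree hypothesis forces the left-hand side to be constant in $k$ for $k \ge r$ as an $\fS_q$-module. Letting $q$ range over $\{0,1,\ldots,s\}$ produces a triangular linear system in the multiplicities $c_\lambda$ (for $|\lambda|\le s$), which can be inverted to conclude that each $c_\lambda(n)$ is constant for $n \ge r+s$. This is condition (iii) of uniform representation stability.

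The generation degree bound comes from applying the same framework to $H_0(F)$: the short exact sequence $0 \to N \to F \to H_0(F) \to 0$ with $N = \operatorname{span}(F_{<\bullet})$ shows via Table 2 that $H_0(F)$ inherits weight $\le s$ and stability degree $\le r$, and since all transition maps of $H_0(F)$ vanish by construction the hypothesis strengthens to $(H_0(F)_m)_{\fS_k}=0$ for $k \ge r$. A summand $V(\lambda)_m$ in $H_0(F)_m$ with $m-r \ge |\lambda|+\lambda_1$ would, by the Pieri computation, produce a summand $V(\lambda)_{m-r}$ in $(V(\lambda)_m)_{\fS_r}$, contradicting this vanishing; combined with multiplicity constancy, this forces $H_0(F)_m = 0$ for $m > r+s$, i.e., $F$ is generated in degree $\le r+s$. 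Condition (ii) of representation stability is then immediate: for $n \ge r+s$ every element of $F_{n+1}$ lies in $\fS_{n+1}\cdot\phi_n(F_n)$. Condition (i) is obtained by running the same vanishing argument on $K = \{\ker(\phi_n)\}$, a sub-$\FI$-module of $F$ inheriting weight $\le s$ and stability degree $\le r$ (Table 2) whose transition maps $K_n \to K_{n+1}$ vanish by construction. The principal obstacle is the Pieri step combined with the triangular inversion: verifying both that $[\tau_q(\lambda)]$ stabilizes in the claimed range of $k$ and that the linear system produced by varying $q\in\{0,1,\ldots,s\}$ is invertible on the subspace spanned by $\{c_\lambda\}_{|\lambda|\le s}$.
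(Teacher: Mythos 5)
The paper's proof is a two-line deduction: it simply cites Proposition 3.3.3 of \cite{CEF} for uniform representation stability with range $\geq r+s$, and then observes that condition~(ii) of that definition (image spans for $n \geq r+s$) is exactly the statement that $H_0(F)_m = 0$ for $m > r+s$, i.e.\ generation in degree $\leq r+s$. Your proposal instead reconstructs that cited proposition from scratch via the branching rule, and then runs the deduction in the reverse direction (generation first, then (ii)). The broad Pieri-rule strategy is indeed what \cite{CEF} does under the hood, so this is a legitimate alternative if carried out carefully — but as written there are several gaps.

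First, the appeal to ``Table 2'' to conclude that $H_0(F)$ (and likewise $K = \ker\phi_\bullet$) inherits stability degree $\leq r$ is unjustified: Proposition~\ref{propkerimrat} requires \emph{all three} terms of the complex to have stability degree $\leq r$, and nothing is known about the stability degree of $N = \operatorname{span}(F_{<\bullet})$ or of $K$. The fact you actually need can be extracted directly: the transition map $T\colon \Phi_q(F)_k \to \Phi_q(F)_{k+1}$ factors through $\Phi_q(N)_{k+1}$ because the image of $F_{k+q}$ in $F_{k+q+1}$ lies in $N_{k+q+1}$ by definition of $N$; combining this with the surjectivity of $T$ for $k\geq r$ and of $\Phi_q(F)_{k+1}\twoheadrightarrow \Phi_q(H_0(F))_{k+1}$ forces $\Phi_q(H_0(F))_{k+1}=0$ for $k\geq r$. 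Second, in the contradiction step your choice $q=m-r$ requires $\lambda[m-r]$ to be a genuine partition, which needs $m-r\geq|\lambda|+\lambda_1$, so that route only rules out $m\geq r+|\lambda|+\lambda_1$, which can be as large as $r+2s$; the clean choice is $q=|\lambda|$ and $k=m-|\lambda|$, for which the horizontal strip $\lambda[m]/\lambda$ exists unconditionally and gives $m\leq r+|\lambda|\leq r+s$ directly. Third, you assert without justification that $K_n=\ker(\phi_n)$ forms a sub-$\FI$-module of $F$; this requires checking that $\tau_*$ sends $\ker(\phi_n)$ into $\ker(\phi_m)$ for an arbitrary injection $\tau\colon[n]\to[m]$, which is not automatic from the definitions, and if one instead takes the standard $\FI$-module-theoretic torsion/kernel construction the claimed stability degree bound needs its own argument. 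The paper sidesteps all of these by outsourcing to \cite{CEF}, which is precisely why its proof is so short.
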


\begin{proof}The first half is Proposition 3.3.3 of \cite{CEF} and the second half follows by noting that property (ii) of being uniformly representation stable implies generation in degree $\leq r+s$.\end{proof}

\subsubsection{$\FI\#$-$\Q$-modules} $\FI\#$-$\bQ$-modules are useful to consider, because for them all the three different quantitative notions of stability collapse to weight.

\begin{proposition}
\label{propdegweight}
If $F$ is an $\FI$-$\bQ$-module generated in degrees $\leq d$ then $F$ has weight $\leq d$. If $F$ also has the structure of an $\FI\#$-$\bQ$-module, then the converse is also true. 
\end{proposition}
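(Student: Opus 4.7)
The plan is to deduce both directions from Pieri's rule, with Theorem 4.1.5 of \cite{CEF} (the classification of $\FI\#$-modules) supplying the converse. Throughout, I write $V_\mu$ for the irreducible $\fS_{|\mu|}$-representation indexed by a partition $\mu$, as in Definition \ref{defirrepnot}(i).

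For the forward direction, suppose $F$ is generated in degrees $\leq d$. For each $n$, the $\bQ[\fS_n]$-module $F_n$ is a quotient of $\bigoplus_{k \leq d} \bQ[\mr{Hom}_\FI([k],[n])] \otimes_{\bQ[\fS_k]} F_k$. The $k$-th summand is exactly the induced representation $\mr{Ind}_{\fS_k \times \fS_{n-k}}^{\fS_n}(F_k \boxtimes \mr{triv})$, so by Pieri's rule every irreducible constituent $V_\lambda$ of $F_n$ is obtained from some irreducible constituent $V_\mu$ of $F_k$ with $k \leq d$ by adding a horizontal strip of size $n-k$ to $\mu$. The horizontal strip condition forces $\lambda_i \leq \mu_{i-1}$ for $i \geq 2$, so
\[ n - \lambda_1 \;=\; \sum_{i \geq 2} \lambda_i \;\leq\; \sum_{j \geq 1} \mu_j \;=\; k \;\leq\; d. \]
In the notation of Definition \ref{defirrepnot}(ii), $V_\lambda = V(\lambda')_n$ with $|\lambda'| = n - \lambda_1 \leq d$, so $F$ has weight $\leq d$.

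For the converse, assume $F$ is an $\FI\#$-$\bQ$-module of weight $\leq d$. By Theorem 4.1.5 of \cite{CEF}, $F \cong M(H_0(F)) = \bigoplus_k M(H_0(F)_k)$, so it suffices to show that each summand $M(V_\mu)$ with $\mu \vdash k$ has weight exactly $k$; this forces $H_0(F)_k = 0$ for $k > d$, which is generation in degrees $\leq d$. The forward direction already gives weight $\leq k$. For the matching lower bound, $M(V_\mu)_{k+\mu_1} = \mr{Ind}_{\fS_k \times \fS_{\mu_1}}^{\fS_{k+\mu_1}}(V_\mu \boxtimes \mr{triv})$, and one horizontal strip of size $\mu_1$ that may legitimately be added to $\mu = (\mu_1,\ldots,\mu_l)$ places $\mu_{i-1} - \mu_i$ cells in row $i$ for $i = 2, \ldots, l+1$ (with $\mu_{l+1} := 0$), producing the partition $\lambda = (\mu_1, \mu_1, \mu_2, \ldots, \mu_l) \vdash k + \mu_1$. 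Pieri then produces a constituent $V_\lambda = V(\lambda')_{k+\mu_1}$ in $M(V_\mu)_{k+\mu_1}$ with $|\lambda'| = (k + \mu_1) - \mu_1 = k$, as required.

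The step likely to demand the most care is the bookkeeping between the two indexing conventions for irreducibles in Definition \ref{defirrepnot} and applying Pieri's rule to the correct partition; after that, both halves reduce to the single combinatorial identity $\sum_{i \geq 2} \lambda_i \leq \sum_j \mu_j$ for horizontal strips, and the sharpness construction in the converse is the only place where the $\FI\#$-hypothesis (via the classification) is used.
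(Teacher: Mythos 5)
The paper states this Proposition without proof, implicitly citing \cite{CEF} (it combines CEF's Propositions 3.2.4 and 3.2.5 with the $\FI\#$-classification, Theorem 4.1.5). Your proof fills in that citation correctly and via the standard route: the identification $M(V)_n \cong \mr{Ind}_{\fS_k \times \fS_{n-k}}^{\fS_n}(V \boxtimes \mr{triv})$, Pieri's rule giving $\sum_{i\geq 2}\lambda_i \leq |\mu|$ for any horizontal strip $\lambda/\mu$, and the classification $F \cong M(H_0(F))$ for the converse. I checked the sharpness construction for the converse: $\lambda = (\mu_1,\mu_1,\mu_2,\ldots,\mu_l)$ is indeed a partition of $|\mu|+\mu_1$ with $\lambda/\mu$ a horizontal $\mu_1$-strip, and the resulting $V(\lambda')_{|\mu|+\mu_1}$ has $|\lambda'|=|\mu|$, which is exactly what's needed to force $H_0(F)_k=0$ for $k>d$. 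Both directions are correct; this is essentially the argument one finds in \cite{CEF}, so there is nothing to flag.
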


Since rational $\fS_k$-representations are self-dual, sending an $\FI\#$-$\bQ$-module $F$ to $\mr{Hom}_\bQ(F,\bQ) \circ \eta$ preserves weight:

\begin{proposition}\label{propweightdualrat} An $\FI\#$-$\Q$-module $F$ has weight $\leq d$ if and only if $\mr{Hom}_\bQ(F,\bQ) \circ \eta$ has weight $\leq d$.\end{proposition}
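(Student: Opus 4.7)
The plan is to reduce everything to the classification of $\FI\#$-$\bQ$-modules (Theorem 4.1.5 of \cite{CEF}) and match the dualization $\mr{Hom}_\bQ(-,\bQ)\circ\eta$ with the usual $\Q$-linear dual on the generators $V_n$.

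First I would invoke Proposition \ref{propdegweight}: for an $\FI\#$-$\bQ$-module $F$, having weight $\leq d$ is equivalent to being generated in degrees $\leq d$, which in turn is equivalent to $V_n := H_0(F)_n = 0$ for all $n > d$. Under the natural isomorphism $F \cong M(H_0(F)) = \bigoplus_n M(V_n)$, the statement therefore reduces to showing that
\[
\mr{Hom}_\bQ(F,\bQ)\circ\eta \;\cong\; \bigoplus_n M(V_n^*),
\]
since then the dualized module is generated exactly in the degrees where $V_n^*\neq 0$, which (over $\bQ$) is the same as the degrees where $V_n\neq 0$.

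The main step is the computation $\mr{Hom}_\bQ(M(V_n),\bQ)\circ\eta \cong M(V_n^*)$ as $\FI\#$-$\bQ$-modules. I would carry this out by choosing, for each $n$-subset $S\subseteq [k]$, a bijection $[n]\to S$, so that $M(V_n)_k \cong \bigoplus_{|S|=n,\,S\subseteq[k]} V_n$ as $\bQ$-vector spaces. Under this identification, an $\FI\#$-morphism $(A,B,\psi):[k_1]\to[k_2]$ sends the summand labeled by $S\subseteq[k_1]$ to the summand labeled by $\psi(S)\subseteq[k_2]$ if $S\subseteq A$, and to $0$ otherwise (twisted by the change-of-bijection in $\fS_n$). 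Unwinding the definition of $\eta$, the morphism of $\mr{Hom}_\bQ(M(V_n),\bQ)\circ\eta$ corresponding to $(A,B,\psi):[k_1]\to[k_2]$ is the dual of the $\FI\#$-action of $(B,A,\psi^{-1}):[k_2]\to[k_1]$ on $M(V_n)$. A direct inspection shows this dual sends the summand $V_n^*$ indexed by $T\subseteq[k_1]$ to the summand indexed by $\psi(T)\subseteq[k_2]$ when $T\subseteq A$ and to $0$ otherwise, which is precisely the $\FI\#$-action on $M(V_n^*)$. One has to check that the $\fS_n$-twistings arising from changing bijections match on both sides, but this is straightforward because $\eta$ is the identity on $\fS_n$-automorphisms.

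With that isomorphism in hand the proposition follows: $F$ has weight $\leq d$ iff $V_n=0$ for $n>d$ iff $V_n^*=0$ for $n>d$ iff $\mr{Hom}_\bQ(F,\bQ)\circ\eta\cong\bigoplus_n M(V_n^*)$ has weight $\leq d$, again via Proposition \ref{propdegweight}. The main obstacle is the bookkeeping in the key computation, specifically verifying that the symmetric-group twists introduced by $\eta$ coincide with those encoded in the Schur-functor-like construction $M(-)$; this is purely combinatorial but requires care to set up cleanly.
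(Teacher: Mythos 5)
Your proposal is correct in outcome but takes a genuinely different (and substantially more elaborate) route than the paper. The paper's proof is essentially the one-sentence remark preceding the proposition: weight is a purely level-wise notion, defined by which irreducibles $V(\lambda)_n$ appear in $F_n$ for each $n$. Since $(\mr{Hom}_\bQ(F,\bQ)\circ\eta)_n$ is precisely the contragredient $\fS_n$-representation $F_n^*$, and every irreducible rational $\fS_n$-representation is isomorphic to its own dual, the multiset of constituents of $F_n^*$ equals that of $F_n$, so the weights agree. No classification theorem, no $M(V)$, no degree of generation, and the $\FI\#$-structure is used only to make $\mr{Hom}_\bQ(F,\bQ)\circ\eta$ an $\FI\#$-module at all. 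Your route instead converts weight to degree of generation via Proposition \ref{propdegweight}, invokes the classification $F\cong\bigoplus_n M(V_n)$, and proves $\mr{Hom}_\bQ(M(V_n),\bQ)\circ\eta\cong M(V_n^*)$. This is valid and has the mild advantage of not actually needing self-duality of $\fS_n$-irreducibles (you only use $V_n=0\iff V_n^*=0$), but it is more work and relies on structure theory that the direct argument sidesteps.

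One small but genuine error in your reasoning: you assert the $\fS_n$-twistings match ``because $\eta$ is the identity on $\fS_n$-automorphisms.'' It is not: $\eta$ sends $(S,S,\sigma)$ to $(S,S,\sigma^{-1})$, i.e.\ it inverts automorphisms. What saves the computation is that this inversion composes with the contravariance of $\mr{Hom}_\bQ(-,\bQ)$ (which is an anti-homomorphism $\fS_n\to\mr{GL}(V_n^*)$) to produce exactly the contragredient left action $(\sigma\cdot\phi)(v)=\phi(\sigma^{-1}v)$ on $V_n^*$. So the conclusion $\mr{Hom}_\bQ(M(V_n),\bQ)\circ\eta\cong M(V_n^*)$ is right, but the stated justification for the twist-matching is not.
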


\subsubsection{Kernels, images and extensions}

In this section, we discuss the behavior of finite generation, weight, and stability degree under kernel mod image and extensions. The following combines Lemma 3.1.6 of \cite{CEF} and a direct consequence of the definition of weight.

\begin{proposition}\label{propkerimrat}
\begin{enumerate}[(i)]
\item If $F \overset{f}{\to} G \overset{g}{\to} H$ is a sequence of $\FI$-$\Q$-modules with $g \circ f = 0$ and $F$, $G$ and $H$ have stability degree $\leq r$, then $\ker(g)/\mr{im}(f)$ has stability degree $\leq r$.
\item If $F \overset{f}{\to} G \overset{g}{\to} H$ is a sequence of $\FI$-$\Q$-modules with $g \circ f =0$ and $G$ has weight $\leq s$, then $\ker(g)/\mr{im}(f)$ has weight $\leq s$.
\end{enumerate}
\end{proposition}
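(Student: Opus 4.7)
Part (ii) is quickest, so I would dispose of it first. For each $n$, $(\ker g)_n$ is a $\bQ[\fS_n]$-subrepresentation of $G_n$ and $(\mr{im}\, f)_n \subset (\ker g)_n$ is a subrepresentation, so $(\ker(g)/\mr{im}(f))_n$ is a subquotient of the $\fS_n$-representation $G_n$. By Maschke's theorem (finite group, characteristic $0$), $G_n$ is semisimple, hence every irreducible constituent of a subquotient of $G_n$ is already an irreducible constituent of $G_n$. Thus the partitions $\lambda$ indexing the irreducible $V(\lambda)_n$-constituents of $\ker(g)/\mr{im}(f)$ form a subset of those for $G$, and the bound $|\lambda|\leq s$ is inherited.

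For part (i), the core observation is that the functor $\Phi_q$ is exact on $\FI$-$\bQ$-modules. Indeed, $\Phi_q(F)_k=(F_{k+q})_{\fS_k}$, and coinvariants under a finite group in characteristic $0$ are exact (via the averaging isomorphism $V_G \cong V^G$ on $\bQ[G]$-modules). Applied degree-wise, this makes $\Phi_q$ an exact functor from $\FI$-$\bQ$-modules to graded $\bQ$-modules that commutes with the formation of kernels, images, and quotients. In particular, $\Phi_q(\ker(g)/\mr{im}(f)) \cong \ker(\Phi_q g)/\mr{im}(\Phi_q f)$.

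Next I would use naturality of the stabilization map $T$. The commutative ladder
\[
\xymatrix{
\Phi_q(F)_k \ar[r] \ar[d]^T & \Phi_q(G)_k \ar[r] \ar[d]^T & \Phi_q(H)_k \ar[d]^T \\
\Phi_q(F)_{k+1} \ar[r] & \Phi_q(G)_{k+1} \ar[r] & \Phi_q(H)_{k+1}
}
\]
has all three vertical arrows isomorphisms as soon as $k\geq r$, by the hypothesis that $F$, $G$, and $H$ each have stability degree $\leq r$. A diagram chase (or a direct application of the functoriality of $\ker/\mr{im}$) then shows that the induced map $\ker(\Phi_q g)_k/\mr{im}(\Phi_q f)_k \to \ker(\Phi_q g)_{k+1}/\mr{im}(\Phi_q f)_{k+1}$ is an isomorphism for $k\geq r$. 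Via the identification above, this is exactly the stabilization map $T: \Phi_q(\ker(g)/\mr{im}(f))_k \to \Phi_q(\ker(g)/\mr{im}(f))_{k+1}$, so $\ker(g)/\mr{im}(f)$ has stability degree $\leq r$.

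\textbf{Main obstacle.} There is no substantial obstacle; both parts are formal consequences of complete reducibility for $\bQ[\fS_n]$-modules. The only point that deserves explicit verification is the exactness of $\Phi_q$, since the definition is phrased in terms of coinvariants rather than the (a priori different) invariants functor; this is where the hypothesis that we work over $\bQ$ is essential and where a careless reader might get tripped up.
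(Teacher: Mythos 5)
Your proof is correct and follows essentially the same route as the paper, which for part (i) simply cites Lemma 3.1.6 of \cite{CEF} (whose proof rests on exactness of $\Phi_q$ over $\bQ$, the point you verify), and for part (ii) invokes exactly the semisimplicity/subquotient observation you give. Both parts check out; the ladder argument for (i) is the standard one, and the identification $\Phi_q(\ker g/\mr{im}\, f)\cong \ker(\Phi_q g)/\mr{im}(\Phi_q f)$ follows from exactness as you say.
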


We remark that weight is preserved by taking submodules and quotients. This is not true for stability degree or degree of generation, but we can still deduce the following:

\begin{lemma}\label{lemkerimgendeg} If $F \overset{f}{\to} G \overset{g}{\to} H$ is a sequence of $\FI$-$\Q$-modules with $g \circ f = 0$ and $F$, $G$ and $H$ have stability degree $\leq r$ and are generated in degree $\leq d$, then $\ker(g)/\mr{im}(f)$ is generated in degree $\leq d+r$.\end{lemma}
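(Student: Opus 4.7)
The plan is to chain together the results already stated just above the lemma, so that the bound on the degree of generation of $\ker(g)/\mr{im}(f)$ pops out of the corresponding bounds on its weight and stability degree via Lemma \ref{FIrepstab}.

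First, I would upgrade the hypothesis ``generated in degree $\leq d$'' on $F$, $G$, $H$ to the statement that all three have weight $\leq d$. This is exactly Proposition \ref{propdegweight}. Combined with the assumed stability degree bound, this means each of $F$, $G$, $H$ has stability degree $\leq r$ and weight $\leq d$.

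Second, I would apply Proposition \ref{propkerimrat} to the sequence $F \overset{f}{\to} G \overset{g}{\to} H$. Part (i) gives that $\ker(g)/\mr{im}(f)$ has stability degree $\leq r$, and part (ii), applied using the weight bound on $G$ alone, gives that $\ker(g)/\mr{im}(f)$ has weight $\leq d$.

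Finally, I would invoke Lemma \ref{FIrepstab} for $\ker(g)/\mr{im}(f)$: an $\FI$-$\Q$-module with stability degree $\leq r$ and weight $\leq d$ is generated in degree $\leq r + d$. This gives the desired bound. There is no real obstacle here; the one thing to be careful about is that we cannot simply invoke preservation of ``generated in degree $\leq d$'' under subquotients (which fails in general), so the argument has to go through weight plus stability degree, and then reconstruct a generation bound at the end via Lemma \ref{FIrepstab}.
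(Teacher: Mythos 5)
Your proof matches the paper's argument step for step: pass from generation degree to weight via Proposition \ref{propdegweight}, apply Proposition \ref{propkerimrat} to bound the stability degree and weight of $\ker(g)/\mr{im}(f)$, then use Lemma \ref{FIrepstab} to recover a generation bound. Your closing caveat about subquotients not preserving generation degree is precisely why the detour through weight and stability degree is needed.
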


\begin{proof}By Proposition \ref{propdegweight}, we have that $F,G,H$ have weight $\leq d$. Since taking $\ker(g)/\mr{im}(f)$ preserves the stability degree and weight by Proposition \ref{propkerimrat}, it has stability degree $\leq r$ and weight $\leq d$. By Lemma \ref{FIrepstab} we have that $\ker(g)/\mr{im}(f)$ is generated in degree $\leq d+r$.\end{proof}

Next we consider extensions. The following proposition is implicit in \cite{CEF} and easy to prove. In particular, note that Part (i) follows from exactness of $\Phi_a$ over $\bQ$.

\begin{proposition}Suppose an $\FI$-$\Q$-module $F$ has a finite filtration $0 \subset F_1 \subset \ldots \subset F_k = F$ by $\FI$-$\Q$-modules. \label{propextrat}
\begin{enumerate}[(i)]
\item If each filtration quotient $F_i/F_{i-1}$ has stability degree $\leq r$, then so has $F$.
\item If each filtration quotient $F_i/F_{i-1}$ has weight $\leq s$, then so has $F$.
\item If each filtration quotient $F_i/F_{i-1}$ is generated in degree $\leq d$, then so is $F$.
\end{enumerate}
\end{proposition}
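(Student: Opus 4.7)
The plan is to reduce all three parts to the case of a short exact sequence by induction on the filtration length $k$. Given a filtration $0 \subset F_1 \subset \cdots \subset F_k = F$, the inductive hypothesis applied to $F_{k-1}$ with its inherited filtration of length $k-1$ shows that $F_{k-1}$ enjoys the relevant property; it then remains to pass that property across the short exact sequence
\[0 \to F_{k-1} \to F \to F/F_{k-1} \to 0.\]
So throughout, I will work with a short exact sequence $0 \to F' \to F \to F'' \to 0$ in which both $F'$ and $F''$ are assumed to satisfy the hypothesis.

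For part (i), the key input is that $\Phi_q$ is exact over $\bQ$: evaluation at $[k+q]$ is exact, and taking $\fS_k$-coinvariants is exact in characteristic zero by Maschke's theorem. Applying $\Phi_q$ degreewise thus produces a ladder of short exact sequences
\[\xymatrix@R=1em{0 \ar[r] & \Phi_q(F')_k \ar[r] \ar[d]^T & \Phi_q(F)_k \ar[r] \ar[d]^T & \Phi_q(F'')_k \ar[r] \ar[d]^T & 0 \\ 0 \ar[r] & \Phi_q(F')_{k+1} \ar[r] & \Phi_q(F)_{k+1} \ar[r] & \Phi_q(F'')_{k+1} \ar[r] & 0.}\]
For $k \geq r$ the outer vertical maps are isomorphisms by hypothesis, so the five lemma forces the middle one to be as well; hence $F$ has stability degree $\leq r$.

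For part (ii), each short exact sequence $0 \to F'_n \to F_n \to F''_n \to 0$ of finite-dimensional $\bQ[\fS_n]$-modules splits by Maschke, so the multiset of irreducible constituents of $F_n$ is the disjoint union of those of $F'_n$ and $F''_n$. In particular, if every $V(\lambda)_n$ appearing in $F'_n$ or $F''_n$ satisfies $|\lambda|\leq s$, the same holds for $F_n$. For part (iii), I will lift generators: pick a finite generating set $\{x_\alpha\}$ of $F'$ with each $x_\alpha$ of degree $\leq d$, and a finite generating set $\{\bar y_\beta\}$ of $F''$ likewise, then lift each $\bar y_\beta$ to $y_\beta \in F_{\deg \bar y_\beta}$ using surjectivity of $F \twoheadrightarrow F''$ in each degree. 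Given any $f \in F_n$, its image in $F''_n$ lies in the $\FI$-span of the $\bar y_\beta$; subtracting off a suitable $\FI$-combination of the $y_\beta$ lands us in $F'_n$, which lies in the $\FI$-span of the $x_\alpha$. Thus $\{x_\alpha\}\cup\{y_\beta\}$ generates $F$ in degrees $\leq d$.

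\textbf{Main obstacle.} None of the three parts poses a real obstacle once the reduction to short exact sequences is made; the only genuinely essential input is the exactness of $\Phi_q$ over $\bQ$ in part (i), which is where the characteristic zero hypothesis plays a role and why no analogous integral statement is being asserted. Parts (ii) and (iii) are purely formal consequences of semisimplicity and right-exactness of the value functors, respectively.
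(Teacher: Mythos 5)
Your proof is correct and follows exactly the approach the paper indicates; the paper itself leaves the proof to the reader, remarking only that it is ``implicit in \cite{CEF} and easy to prove'' and that part (i) follows from exactness of $\Phi_q$ over $\bQ$. Your reduction to short exact sequences by induction, the $\Phi_q$-exactness-plus-five-lemma argument for (i), the Maschke semisimplicity argument for (ii), and the generator-lifting argument for (iii) are all the intended (and correct) fillings-in of that remark.
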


These results imply that one can use spectral sequences to prove bounds on weight or stability degree. In Proposition \ref{proplooprat} and Theorem \ref{thmequivrat}, we will apply this observation to the Eilenberg-Moore spectral sequence and the Quillen spectral sequence respectively.

\subsubsection{Tensor products} We next discuss tensor products of $\FI$-$\bQ$-modules. Two of the cases are dealt with by Propositions 2.3.6 and 3.2.2 of \cite{CEF}, which say:

\begin{proposition}\label{proptensorgeneweight} Let $F,G$ be $\FI$-$\Q$-modules.
\begin{enumerate}[(i)]
\item If $F$ and $G$ are generated in degrees $\leq d_1,d_2$ respectively, then $F \otimes G$ is generated in degree $\leq d_1+d_2$.
\item If $F$ and $G$ have weight $\leq s_1,s_2$ respectively, then $F \otimes G$ has weight $\leq s_1+s_2$.
\end{enumerate}
\end{proposition}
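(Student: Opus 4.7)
The plan is to handle the two parts by separate methods: part (i) is a purely functorial/combinatorial argument about injections between finite sets, while part (ii) reduces to a classical fact in the representation theory of the symmetric groups.

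For part (i), I would unwind the definition of finite generation directly. Pick an element $\xi \in (F \otimes G)_k = F_k \otimes_\bQ G_k$ with $k > d_1 + d_2$. Writing $\xi$ as a sum of pure tensors $f \otimes g$ and using that $F$ is generated in degree $\leq d_1$, I can express each $f$ as a sum of pushforwards $F(\phi)(f')$ along injections $\phi : [m] \hookrightarrow [k]$ with $m \leq d_1$ and $f' \in F_m$; similarly each $g$ is a sum of terms $G(\psi)(g')$ with $\psi : [n] \hookrightarrow [k]$ and $n \leq d_2$. The key observation is that for each resulting pair $(\phi, \psi)$, the union of their images has cardinality at most $m+n \leq d_1 + d_2 < k$, so both $\phi$ and $\psi$ factor as $\iota \circ \phi'$ and $\iota \circ \psi'$ through a common injection $\iota : [\ell] \hookrightarrow [k]$ with $\ell \leq d_1 + d_2$. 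By functoriality of $F \otimes G$, the corresponding summand is the image under $(F \otimes G)(\iota)$ of $F(\phi')(f') \otimes G(\psi')(g') \in (F \otimes G)_\ell$, proving generation of $F \otimes G$ in degree $\leq d_1 + d_2$.

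For part (ii), the statement reduces, by decomposing $F_n$ and $G_n$ into irreducibles, to the following claim: for any partitions $\lambda$ and $\mu$, every irreducible $V(\nu)_n$ appearing in the Kronecker product $V(\lambda)_n \otimes V(\mu)_n$ satisfies $|\nu| \leq |\lambda| + |\mu|$. One clean way to establish this is to observe that the irreducible $V(\lambda)_n$ embeds as a $\fS_n$-subrepresentation of the $|\lambda|$-fold tensor power of the permutation representation $\bQ^n$. Consequently $V(\lambda)_n \otimes V(\mu)_n$ is a subrepresentation of $(\bQ^n)^{\otimes(|\lambda|+|\mu|)}$, and a direct analysis via Schur--Weyl duality (or via the description of $(\bQ^n)^{\otimes c}$ as induced from trivial representations of Young subgroups) shows that the only irreducibles appearing in this tensor power have the form $V(\nu)_n$ with $|\nu| \leq |\lambda|+|\mu|$.

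The main obstacle is part (ii): although it amounts to a classical stability statement about reduced Kronecker coefficients, giving a genuinely self-contained proof requires nontrivial input from the representation theory of symmetric groups (either Murnaghan's theorem or an explicit Schur--Weyl/character argument). Part (i), by contrast, is purely formal and relies only on the pigeonhole fact that two injections whose combined image sizes are less than $k$ must factor through a proper subset of $[k]$. Both statements, as noted in the paper, appear as Propositions 2.3.6 and 3.2.2 of \cite{CEF}, and a careful proof would either reproduce or cite the representation-theoretic bound in part (ii).
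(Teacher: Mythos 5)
Both parts of your argument are correct, but a comparison with ``the paper's own proof'' is not quite possible: the paper records this statement as Propositions~2.3.6 and~3.2.2 of~\cite{CEF} and gives no proof of its own. So what follows assesses your argument against the standard (CEF) arguments.

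Part~(i) is the right argument and is essentially what appears in~\cite{CEF}: after expanding $f$ and $g$ in terms of generators pushed forward along injections $\phi,\psi$, the combined image $\mathrm{im}(\phi)\cup\mathrm{im}(\psi)$ has at most $d_1+d_2$ elements, so both injections factor through a common $\iota\colon[\ell]\hookrightarrow[k]$ with $\ell\le d_1+d_2$, and functoriality of $F\otimes G$ then exhibits the summand as a pushforward from degree~$\ell$. One small point you should make explicit: this shows $F\otimes G$ is \emph{spanned} by elements of degree $\le d_1+d_2$, and you then get a \emph{finite} generating set because each $(F\otimes G)_m=F_m\otimes_\bQ G_m$ is finite-dimensional (as $F$ and $G$ are finitely generated). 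Part~(ii) correctly reduces the claim to the classical stability bound on Kronecker products: every constituent $V(\nu)_n$ of $V(\lambda)_n\otimes V(\mu)_n$ has $|\nu|\le|\lambda|+|\mu|$. Your route---embed $V(\lambda)_n$ into $(\bQ^n)^{\otimes|\lambda|}$, hence $V(\lambda)_n\otimes V(\mu)_n$ into $(\bQ^n)^{\otimes(|\lambda|+|\mu|)}$, and then decompose $(\bQ^n)^{\otimes c}$ into permutation modules $\bQ[\fS_n/\fS_{n-j}]$ with $j\le c$, each of whose constituents are $V(\nu)_n$ with $|\nu|\le j$---is a clean and correct way to establish that bound, and it is close in spirit to the CEF approach (which controls weight via the modules $M(m)$). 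You are right that filling in all the details of~(ii) requires nontrivial symmetric-group representation theory; as written it is an honest sketch, and the sketch is sound.
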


Next, we explain how stability degree ranges behaves under tensor products.
If $A$ is any topological abelian group, then one can define a simplicial bar construction $B^\mr{simp} A$. This is the geometric realization $|B_\bullet A|$ of the simplicial space $B_\bullet A$ given by $B_k A = A^{k}$. The face maps $d_i: B_k A \to B_{k-1} A$ for $0 \leq i \leq k$ are given by
\[d_i(a_1,\ldots,a_k) = \begin{cases} (a_2,\ldots,a_k) & \text{if $i=0$} \\
(a_1,\ldots,a_i+a_{i+1},\ldots,a_k) & \text{if $1 \leq i \leq k-1$} \\
(a_1,\ldots,a_{k-1}) & \text{if $i = k$}\end{cases}\]
and degeneracy maps by including identities. This is natural in continuous homomorphisms.  If $A$ was abelian, then $BA$ is again an abelian topological group and we can iterate the construction. If we use $B$ without decoration, it will mean $B^\mr{simp}$. Later, we will use other superscripts to denote other models of the bar construction. The following result is well-known, but we include a proof to stress its naturality.

\begin{lemma}\label{lemcohfree} Let $A$ be a finitely generated free abelian group. There is a natural isomorphism between $H^*(BA)$ and $\Lambda_\bZ \Homz{A}[1]$, the free graded-commutative algebra on $\Homz{A}$ in degree $1$.\end{lemma}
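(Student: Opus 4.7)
The plan is to build a natural algebra map $\Phi_A \colon \Lambda_\Z \Homz{A}[1] \to H^*(BA;\Z)$ and check it is an isomorphism by reducing to the rank-one case via Künneth.

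First, I construct $\Phi_A$. For any topological group $G$, the simplicial bar construction satisfies $\pi_1(BG) \cong \pi_0(G)$ naturally, so for $A$ a discrete abelian group we get a natural isomorphism $A \cong \pi_1(BA)$. Composing with the Hurewicz homomorphism $\pi_1(BA) \to H_1(BA;\Z)$ (an isomorphism since $\pi_1(BA)$ is already abelian and $BA$ is path-connected), and then dualizing via the universal coefficient theorem (which for $H_0 = \Z$ free simply gives $H^1(BA;\Z) \cong \Homz{H_1(BA;\Z)}$), yields a natural isomorphism $\Homz{A} \cong H^1(BA;\Z)$. Graded-commutativity of cup product means classes of degree $1$ have vanishing square even over $\Z$, so the universal property of the exterior algebra extends this to a natural algebra homomorphism $\Phi_A$.

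Next I verify $\Phi_A$ is an isomorphism. A direct inspection of the simplicial formula $B_k A = A^k$ shows that the simplicial bar construction commutes with finite products at the level of simplicial abelian groups, and geometric realization commutes with finite products (the simplicial sets involved are of finite type so the standard caveats do not bite), giving a natural homeomorphism $B(A \oplus A') \cong BA \times BA'$ of topological abelian groups. Choosing a basis presents $A \cong \Z^n$, hence $BA \cong (B\Z)^n$. For $A = \Z$ the simplicial bar construction $B\Z$ is a standard CW model of $S^1$, so $H^*(B\Z;\Z)$ is the exterior algebra on one degree-$1$ generator, and the map $\Phi_\Z$ is the identification sending the dual of $1 \in \Z$ to the fundamental class. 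Since the cohomology of each factor is a free $\Z$-module of finite rank in each degree, the Künneth theorem yields $H^*((B\Z)^n;\Z) \cong \bigotimes_{i=1}^n H^*(B\Z;\Z) = \Lambda_\Z \Z^n[1]$, and this matches $\Phi_{\Z^n}$ under the product-compatibility of $\Phi$.

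The point requiring the most care is compatibility of $\Phi$ with products: one must check that the diagram relating $\Phi_{A \oplus A'}$ to $\Phi_A \otimes \Phi_{A'}$ via $B(A \oplus A') \cong BA \times BA'$ and the Künneth isomorphism actually commutes. This follows once one notes that both constructions are determined on degree-$1$ classes by functoriality applied to the two projections $A \oplus A' \to A$ and $A \oplus A' \to A'$, and degree-$1$ classes generate both targets multiplicatively. Naturality of $\Phi_A$ in $A$ is automatic from the construction, since every ingredient (bar construction, Hurewicz, universal coefficients, cup product) is functorial in the continuous homomorphism $A \to A'$.
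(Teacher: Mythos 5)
Your proof is essentially correct and takes a genuinely different route from the paper's. The paper's proof replaces the simplicial bar construction $B^{\mathrm{simp}}A$ by the toroidal model $B^{\mathrm{tor}}A = (A\otimes_\bZ\bR)/A$ via a natural group homomorphism that is a homotopy equivalence, and then identifies $H^*(B^{\mathrm{tor}}A;\bZ)$ with the integer-valued translation-invariant forms inside de Rham cohomology, yielding $\Lambda_\bZ \Homz{A}[1]$ with no basis ever chosen. You instead build the natural algebra map $\Phi_A$ by hand from $\pi_1$, Hurewicz, and universal coefficients, and then check it is an isomorphism by a K\"unneth reduction to $B\bZ \simeq S^1$; the basis choice is harmless because a natural transformation that is an objectwise isomorphism is a natural isomorphism. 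Both strategies are valid; the paper's is slicker on the naturality side, yours is more elementary in that it avoids de Rham theory and the toroidal model entirely.

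There is one gap in your justification. You claim that graded-commutativity of the cup product forces $x^2 = 0$ for $x \in H^1(BA;\bZ)$, which you need in order to invoke the universal property of the exterior algebra. Over $\bZ$, graded-commutativity only gives $x^2 = -x^2$, i.e.\ $2x^2 = 0$; it does not force $x^2 = 0$ in general (the free graded-commutative algebra on a degree-$1$ generator over $\bZ$ has a nonzero $2$-torsion class $x^2$, and is strictly larger than the exterior algebra). The assertion $x^2=0$ for an integral degree-$1$ class is nevertheless true, but for a different reason: the class $x$ is represented by a map $f\colon BA \to K(\bZ,1) = S^1$ with $x = f^*\iota$ for $\iota$ the fundamental class, hence $x^2 = f^*(\iota^2)$ vanishes because $H^2(S^1;\bZ) = 0$. (Alternatively, you could postpone the definition of $\Phi_A$ until after the K\"unneth computation shows $H^2(BA;\bZ)$ is torsion-free, at which point $2x^2=0$ does imply $x^2=0$, but as written your order of argument needs the $S^1$ reason.) With that justification substituted, the proof goes through.
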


\begin{proof}If $A$ is a (discrete) finitely generated free abelian group, there is another model for the classifying space. This toroidal model $B^\mr{tor} A$ is the topological abelian group given by $(A \otimes_\bZ \bR)/A$. There is a natural continuous group homomorphism $B^\mr{simp} A \to B^\mr{tor} A$, given by sending $(\vec{t},a_1,\ldots,a_k) \in B^\mr{simp} A$ with $\vec{t} = (0\leq t_1 \leq \ldots \leq t_k \leq 1) \in \Delta^k$ to $\sum_i a_i \otimes t_i$, with sum taken in $A \otimes_\bZ \bR$. It induces an isomorphism on $\pi_1$ and hence is a homotopy equivalence. 

Note that $B^\mr{tor} A$ is a manifold, and we can identify $H^*(B^\mr{tor}A)$ with the subgroup of the de Rham cohomology $H^*_\mr{dR}(B^\mr{tor}A)$ having integer values on all integral homology classes. By averaging, $H^*_\mr{dR}(B^\mr{tor}A)$ can be identified with the exterior algebra on the cotangent space at $0$, placed in degree 1. This cotangent space can be identified with $\mr{Hom}_{\bZ}(A,\bR)$ and the integral-valued forms with $\Homz{A}$. All these identifications are natural.\end{proof}

\begin{proposition}\label{propemfg}
Let $F$ be a finitely generated $\FI$-$\Z$-module. For all $n \geq 1$ and all $i\geq 0$, the $\FI$-$\Z$-modules $H^i(B^n \Homz{F})$  and $H^i(B^n \Extz{F})$ are finitely generated. The same is true for $\FI\#$-$\Z$-modules.
\label{EM}
\end{proposition}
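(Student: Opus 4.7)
The plan is to induct on $n$ and reduce to the case where $F$ is levelwise free. For $n=1$ with arbitrary $F$, the $\Homz{F}$ part is immediate from Lemma \ref{lemcohfree}: naturally as $\FI$-$\Z$-modules, $H^*(B\Homz{F}) \cong \Lambda_\Z(F/T)[1]$, where $T$ is the torsion sub-$\FI$-$\Z$-module of $F$; since $F/T$ is a finitely generated quotient of $F$ (Proposition \ref{propfgprops}(ii)) and exterior powers are quotients of tensor powers (Proposition \ref{propfgprops}(i)), this is finitely generated. For $n \geq 2$ and $F$ levelwise free, I would apply the simplicial bar spectral sequence
\[E_1^{p,q} = H^q((B^{n-1}\Homz{F})^p) \Rightarrow H^{p+q}(B^n\Homz{F});\]
by K\"unneth over $\Z$, each $E_1^{p,q}$ is a direct sum of tensor and $\mr{Tor}^1_\Z$ terms built from $H^*(B^{n-1}\Homz{F})$, which is finitely generated by induction. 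Tensor products of finitely generated $\FI$-$\Z$-modules are finitely generated by Proposition \ref{propfgprops}(i), and Tor of finitely generated $\FI$-$\Z$-modules is too (resolve one factor by a finitely generated free module and use Noetherianity, Proposition \ref{propfgprops}(iii)). Corollary \ref{corssfg} then yields finite generation of $H^*(B^n\Homz{F})$ in the levelwise free case.

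To handle general $F$, I would take a short exact sequence $0 \to G \to F_0 \to F \to 0$ with $F_0$ finitely generated free; Noetherianity then makes $G$ finitely generated, and both $F_0$ and $G$ are levelwise free. Applying $\Homz{-}$ gives the four-term exact sequence
\[0 \to \Homz{F} \to \Homz{F_0} \to \Homz{G} \to \Extz{F} \to 0.\]
Let $K$ denote the image of the middle map; $K$ is itself levelwise free, and dualizing the short exact sequence $0 \to \Homz{F} \to \Homz{F_0} \to K \to 0$ (using $\Homz{\Homz{A}} = A$ for $A$ levelwise free, together with $\Homz{\Homz{F}} = F/T$) produces $0 \to \Homz{K} \to F_0 \to F/T \to 0$. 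Hence $H := \Homz{K}$ is a sub-$\FI$-$\Z$-module of the finitely generated $F_0$ and is therefore finitely generated by Noetherianity, and $K \cong \Homz{H}$. The levelwise free case established above then implies that $H^*(B^m K) = H^*(B^m \Homz{H})$ is finitely generated for every $m$.

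Delooping the two short exact sequences yields the fibrations $B^n\Homz{F} \to B^n\Homz{F_0} \to B^n K$ and $B^n K \to B^n\Homz{G} \to B^n \Extz{F}$. For $H^i(B^n\Homz{F})$ with $n \geq 2$, I would extend the first fibration by one step of the Puppe construction, obtaining $B^{n-1}K \to B^n\Homz{F} \to B^n\Homz{F_0}$, and run the Serre spectral sequence $E_2^{p,q} = H^p(B^n\Homz{F_0}; H^q(B^{n-1}K)) \Rightarrow H^{p+q}(B^n\Homz{F})$; the base is simply connected (as $n \geq 2$), so coefficients are trivial, and both base and fiber cohomologies are finitely generated, allowing Corollary \ref{corssfg} to conclude. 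For $H^i(B^n\Extz{F})$ with $n \geq 1$, I would extend the second fibration at \emph{level $n+1$} by two steps of Puppe, producing the fibration $B^n\Homz{G} \to B^n\Extz{F} \to B^{n+1}K$; the Serre spectral sequence $E_2^{p,q} = H^p(B^{n+1}K; H^q(B^n\Homz{G})) \Rightarrow H^{p+q}(B^n\Extz{F})$ has simply connected base $B^{n+1}K$ (since $n+1 \geq 2$), and both base and fiber cohomologies are finitely generated, so Corollary \ref{corssfg} concludes. The $\FI\#$-statement follows because every construction---free resolutions, duality, classifying spaces, Puppe sequences, and Serre spectral sequences---respects the $\FI\#$-structure.

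The principal obstacle I foresee is establishing that the Puppe and Serre constructions, applied to our fibrations of semistrict co-$\FI$-spaces coming from short exact sequences of topological abelian groups, genuinely produce spectral sequences of $\FI$-$\Z$-modules. This is what Section \ref{secbasepoints} is set up to provide, but one must verify that our particular fibrations fit into that framework and that compatible path-lifting functions exist (which ultimately follows from the translation action of the base on the total space). A secondary but routine concern is ensuring that K\"unneth decompositions with their Tor-terms are computed as $\FI$-$\Z$-modules in a manner compatible with all the relevant functoriality.
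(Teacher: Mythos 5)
Your argument is correct, but it takes a genuinely different route from the paper's. Both proofs begin by computing $H^*(B\Homz{F})$ via Lemma~\ref{lemcohfree} and then iterate the geometric-realization (bar) spectral sequence with K\"unneth, and both obtain finite generation of $\mathrm{Tor}_\Z$ by resolving one factor by a finitely generated levelwise-free $\FI$-$\Z$-module and invoking Noetherianity. Where you diverge is the Ext part: the paper splits $F$ into its torsion and free parts, resolves $F[\mathrm{tors}]$ by $0 \to N \to M \to F[\mathrm{tors}] \to 0$ to get the three-term sequence $0 \to \Homz{M} \to \Homz{N} \to \Extz{F} \to 0$, and then computes $H^*(B\Extz{F})$ by the Rothenberg--Steenrod spectral sequence for the inclusion $B\Homz{M}\hookrightarrow B\Homz{N}$; higher $n$ is handled again by the bar spectral sequence. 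You instead resolve $F$ itself, introduce the intermediate image $K$, and shift the resulting delooped sequences by Puppe so as to put $B^n\Extz{F}$ in the total-space position of a Serre spectral sequence with base $B^{n+1}K$. The price is that you then need to control $H^*(B^m K)$ for a levelwise-free \emph{co}-$\FI$-$\Z$-module $K$, which you do by the neat double-dual observation $K \cong \Homz{\Homz{K}}$ with $\Homz{K}$ finitely generated by Noetherianity inside $F_0$. This works, and it buys you a single spectral-sequence tool (Serre via Puppe) in place of Rothenberg--Steenrod; it costs you the extra functoriality verification you flag, since a Puppe shift is only a homotopy fiber sequence and must be promoted to a genuine fibration of semistrict co-$\FI$-spaces, whereas Rothenberg--Steenrod applies directly to the short exact sequence of topological abelian groups. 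Two smaller remarks. First, your Serre spectral sequence $B^{n-1}K \to B^n\Homz{F} \to B^n\Homz{F_0}$ for the Hom part is not needed: $\Homz{F}$ is naturally isomorphic to $\Homz{F/F[\mathrm{tors}]}$, so the levelwise-free case you already established applies directly to $\Homz{F}$ for arbitrary finitely generated $F$. Second, you are right to carry Tor terms through the bar spectral sequence even on the Hom side; for $n\geq 4$ the groups $H^*(B^{n-1}\Homz{F})$ do have torsion, so the K\"unneth sequence is genuinely non-split, and your treatment is more careful on this point than the paper's parenthetical remark that ``no Tor-groups appear.''
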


\begin{proof}We only prove the $\FI$-$\Z$-module case, as exactly the same proof can be used for $\FI\#$-$\Z$-modules by replacing each mention of $\FI$ with $\FI\#$. We start by separating $F$ into its free and torsion parts. There is a natural short exact sequence of $\FI$-$\bZ$-modules
\[0 \to F[\mr{tors}] \to F \to F/F[\mr{tors}] =: F[\mr{free}] \to 0\]

Because $F$ is finitely generated, both $F[\mr{tors}]$ and $F[\mr{free}]$ are finitely generated by Proposition \ref{propfgprops}. There are natural identifications 
\[\Homz{F} \cong \Homz{F[\mr{free}]} \quad \text{and} \quad \Extz{F} \cong \Extz{F[\mr{tors}]}\]
so it suffices to prove that $ H^i(B^n \Homz{F[\mr{free}]})$ and $H^i(B^n \Extz{F[\mr{tors}]})$ are finitely generated. 

We first prove that each cohomology group $H^i(B\Homz{F[\mr{free}]})$ is finitely generated. By Lemma \ref{lemcohfree} $H^*(B\Homz{F[\mr{free}]})$ is naturally isomorphic to $\Lambda^*_\bZ F[\mr{free}]$, the free exterior algebra. A free exterior algebra on a finitely generated $\FI$-$\bZ$-module in degree $1$ is finitely generated in each degree, because the exterior algebra is a quotient of the tensor algebra and tensor products preserve finite generation by Proposition \ref{propfgprops}.

For the cohomology of $B^n\Homz{F[\mr{free}]}$, we do an induction over $n$ using the natural geometric realization spectral sequence for cohomology of $B^n\Homz{F[\mr{free}]}$ and applying Corollary \ref{corssfg}. This spectral sequence is given by
\[H^q(B^{n-1} \Homz{F[\mr{free}]}^{\times p+1}) \Rightarrow H^{p+q}(B^n\Homz{F[\mr{free}]})\]
and we note that no $\mr{Tor}$-groups appear when applying the K\"unneth theorem, since all cohomology is free.

For $\mr{Ext}_\Z^1(F[\mr{tors}],\Z)$. we recall $F[\mr{tors}]$ is finitely generated, and thus by Proposition 2.3.5 of \cite{CEF} there is a surjection $M \to F[\mr{tors}]$ from a levelwise free finitely generated $\FI$-$\bZ$-module $M$. Let $N$ denote the kernel. This is levelwise free because submodules of free $\Z$ modules are free and is finitely generated by Proposition \ref{propfgprops}. This gives rise to a long exact sequence of co-$\FI$-$\bZ$-modules:
\begin{align*}0 &\to \Homz{F[\mr{tors}]} \to \Homz{M} \to \Homz{N} \to \\
&\to \Extz{F[\mr{tors}]} \to \Extz{M} \to \Extz{N} \to 0\end{align*}
and since $F[\mr{tors}]$ takes values in torsion abelian groups and $M$ and $N$ take values in free abelian groups, this reduces to a short exact sequence:
\[0 \to \Homz{M} \to \Homz{N} \to \Extz{F[\mr{tors}]} \to 0 \]
which in turn gives rise a fiber sequence of co-$\FI$-spaces upon taking classifying spaces:
\[B\Homz{M} \to B\Homz{N} \to B\Extz{F[\mr{tors}]}\]

The above sequence consists of short exact sequences of connected topological abelian groups. For an injective continuous homomorphism $H \hookrightarrow G$ of topological abelian groups. the cohomological Rothenberg-Steenrod spectral sequence \cite{RSt} is given by 
\[E_2^{p,q} = \mr{Tor}_{p,q}^{H^*(G)}(H^*(H),\bZ) \Rightarrow H^{p+q}(G/H)\]
and this is natural in commutative diagrams of continuous homomorphisms of topological groups. Thus we obtain a sequence a spectral sequence of $\FI$-$\bZ$-modules and by Corollary \ref{corssfg} it suffices to prove that $\mr{Tor}_{p,q}^{H^*(G)}(H^*(H),\bZ)$ is finitely generated for $G = B\Homz{M}$ and $H = B\Homz{N}$. The first part of this proof shows that $H^p(B\Homz{M})$ and $H^q(B\Homz{N})$ are finitely generated for all $p$ and $q$. Finite generation of $\mr{Tor}_{p,q}^{H^*(G)}(H^*(H),\bZ)$ now follows: using Proposition \ref{propfgprops} it suffices to remark that, in the bar resolution computing $\mr{Tor}$, all entries are finitely generated.

For the cohomology of $B^n\Extz{F[\mr{tors}]}$ we perform an induction over $n$ using Corollary \ref{corssfg} and the geometric realization spectral sequence for the cohomology of $B^n \Extz{F[\mr{tors}]}$. Here we need to use the full version of the K\"unneth theorem, which states that there is a natural short exact sequence
\[0 \to \bigoplus_{i+j = n} H^i(X) \otimes H^i(Y) \to H^n(X \times Y) \to \bigoplus_{i+j = n-1} \mr{Tor}_\bZ(H^i(X),H^j(Y)) \to 0\]

Thus $H^*(X \times Y)$ will consist of finitely generated $\FI$-$\bZ$-modules if $H^*(X)$ and $H^*(Y)$ do, as soon as we prove that $\mr{Tor}$ of two finitely generated $\FI$-$\bZ$-modules is finitely generated. But this follows because a finitely generated $\FI$-$\bZ$-module has a projective resolution by finitely generated $\FI$-$\bZ$-modules, which can be seen by combining Proposition 2.3.5 and Remark 2.2.A of \cite{CEF} with Proposition \ref{propfgprops}.\end{proof}

\begin{lemma}
Let $X$ be a $1$-connected semistrict co-$\FI$-space with $H^i(X)$ finitely generated for all $i$. Then for each $i \geq 0$, $n \geq 1$ and $k \geq  0$ the abelian group $H^i(W_n(X_k))$ is finitely generated. Furthermore, $H^i(W_n(X))$ is finitely generated as an $\FI$-$\bZ$-module for all $i$ and $n \geq 1$. The same is true for co-$\FI\#$-spaces.

\label{whitehead}
\end{lemma}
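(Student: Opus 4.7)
The plan is to proceed by induction on $n$. The base case $n=1$ is immediate: for a $1$-connected $X$, the first Whitehead stage $W_1(X)$ is homotopy equivalent to $X$, so $H^i(W_1(X))$ is finitely generated by hypothesis. For the inductive step, assume $H^i(W_{n-1}(X))$ is a finitely generated $\FI$-$\bZ$-module for all $i$. The central tool is the fibration of semistrict co-$\FI$-spaces $B^{n-1}\pi_n(X) \to W_n(X) \to W_{n-1}(X)$ coming from Proposition \ref{prophofibstruct} with fiber identified by Proposition \ref{fiberisB}.

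First I would deduce that $\Homz{\pi_n(X)}$ and $\Extz{\pi_n(X)}$ are finitely generated $\FI$-$\bZ$-modules. Since $W_{n-1}(X)$ is $(n-1)$-connected, the Hurewicz theorem gives $H_n(W_{n-1}(X)) \cong \pi_n(X)$ as co-$\FI$-$\bZ$-modules, and universal coefficients identifies $\Homz{\pi_n(X)}$ with $H^n(W_{n-1}(X))$ while exhibiting $\Extz{\pi_n(X)}$ as a submodule of $H^{n+1}(W_{n-1}(X))$; the induction hypothesis together with Proposition \ref{propfgprops}(iii) (Noetherianity of $\bZ$) then gives finite generation. Next I would show that $H^*(B^{n-1}\pi_n(X))$ is a finitely generated $\FI$-$\bZ$-module. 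The natural short exact sequence $0 \to \pi_n(X)[\mr{tors}] \to \pi_n(X) \to \pi_n(X)[\mr{free}] \to 0$ of co-$\FI$-$\bZ$-modules induces a fibration $B^{n-1}\pi_n(X)[\mr{tors}] \to B^{n-1}\pi_n(X) \to B^{n-1}\pi_n(X)[\mr{free}]$. Since each $\pi_n(X_k)$ is a finitely generated abelian group, there are natural isomorphisms $\pi_n(X)[\mr{free}] \cong \Homz{\Homz{\pi_n(X)}}$ (the evaluation map) and $\pi_n(X)[\mr{tors}] \cong \Extz{\Extz{\pi_n(X)}}$ (coming from Pontryagin double duality, using $\Extz{T} \cong \mr{Hom}(T,\bQ/\bZ)$ for finite $T$), so Proposition \ref{EM} applied to the $\FI$-$\bZ$-modules $\Homz{\pi_n(X)}$ and $\Extz{\pi_n(X)}$ shows both the base and fiber have finitely generated cohomology. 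Corollary \ref{corssfg} applied to the Serre spectral sequence of this fibration then completes the sub-step.

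Finally I would apply the Serre spectral sequence to $B^{n-1}\pi_n(X) \to W_n(X) \to W_{n-1}(X)$. The base $W_{n-1}(X)$ is $(n-1)$-connected, hence simply connected for $n \geq 2$, so the local coefficient system is trivial. The $E_2$-page $H^p(W_{n-1}(X); H^q(B^{n-1}\pi_n(X)))$ is built by universal coefficients from tensor products and $\mr{Tor}$ of the finitely generated $\FI$-$\bZ$-modules already established, hence is finitely generated by Proposition \ref{propfgprops}. Corollary \ref{corssfg} then yields that $H^i(W_n(X))$ is finitely generated as an $\FI$-$\bZ$-module, from which pointwise finite generation of $H^i(W_n(X_k))$ as an abelian group is automatic (a finite generating set has only finitely many images under each of the finitely many $\FI$-morphisms into $[k]$). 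The entire argument carries over with $\FI$ replaced by $\FI\#$. I expect the main obstacle to be ensuring that all of the Serre spectral sequences genuinely assemble into spectral sequences of $\FI$-$\bZ$-modules, carefully tracking base points through the Whitehead tower, and verifying naturality of the Pontryagin-duality identification of the torsion subgroup.
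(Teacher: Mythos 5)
Your proof is correct and follows essentially the same strategy as the paper: induction on $n$, the Serre spectral sequence of the fibration $\mr{hofib}(w^n_{n-1}) \to W_n(X) \to W_{n-1}(X)$, identification of the fiber as $B^{n-1}\pi_n(X)$ via Proposition~\ref{fiberisB}, the torsion/free splitting of $\pi_n(X)$ fed into Proposition~\ref{EM}, and Corollary~\ref{corssfg}. The one cosmetic difference is that you first extract $\Homz{\pi_n(X)}$ and $\Extz{\pi_n(X)}$ (invoking Noetherianity for the $\Extz{-}$ piece) and then double-dualize, whereas the paper's proof works directly with $\Homz{H^n(W_{n-1}(X))}$ and $\Extz{H^{n+1}(W_{n-1}(X))}$, which are already in the form $\Homz{F}$, $\Extz{F}$ with $F$ finitely generated by the inductive hypothesis, so no intermediate Noetherianity step is needed; your variant is equally valid.
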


\begin{proof}We only give the proof of co-$\FI$-spaces and $\FI$-$\bZ$-modules. We remark that $w_1: W_1(X)\to X$ is a homotopy equivalence since $X$ is $1$-connected. This proves the result for $n = 1$. Suppose we have proven the result for $n-1$, then we will prove it for $n$. First of all, note that the Hurewicz map $\pi_n(X) \to H_n (W_{n-1}(X))$ is an isomorphism of co-$\FI$-$\bZ$-modules.

Proposition \ref{PWisSemiStrict} applies to the fibration 
\[\mr{hofib}(w^n_{n-1}) \to \tilde{W}_n(X) \to W_{n-1}(X)\]
where $\tilde{W}_n(X)$ denotes the result of replacing $W_n(X)$ by $W_n(X) \times_{w_n^{n-1}} W_{n-1}(X)^I$. Also recall there is a homotopy equivalence $\tilde{W}_n(X) \to W_n(X)$ of semistrict co-$\I$-spaces. We thus get a Serre spectral of $\FI$-$\bZ$-modules given by
\[E_2^{p,q} = H^p(W_{n-1} (X),H^q(\mr{hofib}(w^n_{n-1}))) \Rightarrow H^{p+q}(W_n(X))\]

In Proposition \ref{fiberisB} we proved that $H^q(\mr{hofib}(w^n_{n-1})) \cong H^q(B^{n-1}\pi_n(X))$ as an $\FI$-$\bZ$-module. So we would done by an application of Corollary \ref{corssfg} if we could prove that $H^q(B^{n-1}\pi_n(X))$ was finitely generated as an abelian group and an $\FI$-$\bZ$-module for all $q \geq 0$. Because the abelian groups are finitely generated, there is a natural short exact sequence of co-$\FI$-$\bZ$-modules:
\[0 \to \Extz{H^{n+1}(W_{n-1}(X))} \to H_n (W_{n-1}(X)) \cong \pi_n(X,*) \to \Homz{H^n (W_{n-1}(X))} \to 0\]
leading to a Serre spectral sequence of $\FI$-$\bZ$-modules
\[E_2^{p,q} = H^p(B^{n-1}(\Homz{H^n (W_{n-1}(X))}),H^q(B^{n-1} \Extz{H^{n+1}(W_{n-1}(X))}))\]
\[\Rightarrow H^{p+q}(B^{n-1}\pi_n(X))\]
By combining Corollary \ref{corssfg} and Proposition \ref{EM} we conclude that $H^q(B^{n-1}\pi_n(X))$ and $H^{p+q}(W_n(X))$ are both finitely generated as abelian groups and as $\FI$-$\bZ$-modules.\end{proof}

\begin{lemma}
\label{lempostnikov}
Let $X$ be a $1$-connected semistrict co-$\FI$-space with $\Homz{\pi_i(X)}$ and $\Extz{\pi_i(X)}$ finitely generated for all $i$. Then for each $i \geq 0$, $n \geq 1$ and $k \geq 0$ the abelian group $H^i(P_n(X_k))$ is finitely generated. Furthermore $H^i(P_n(X))$ is finitely generated as an $\FI$-$\Z$-module for all $i \geq 0$ and $n \geq 1$. The same is true for co-$\FI\#$-$\Z$-spaces.

\label{postnikov}
\end{lemma}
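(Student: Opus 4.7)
The plan is to argue by induction on $n$, paralleling the structure of the proof of Lemma \ref{whitehead} but replacing the Whitehead tower with the Postnikov tower. The base case $n = 1$ is immediate: since $X$ is $1$-connected, each $P_1(X_k)$ is contractible and $H^i(P_1(X))$ is trivially finitely generated. For the inductive step, fix $n \geq 2$ and assume the result for $n-1$. By Proposition \ref{PWisSemiStrict}, the Postnikov map $p_n^{n-1} \colon P_n(X) \to P_{n-1}(X)$ is a semistrict map of semistrict co-$\FI$-spaces, and Proposition \ref{fiberisB} identifies the cohomology of its homotopy fiber with that of the co-$\FI$-space $B^n \pi_n(X)$. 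Since $P_{n-1}(X)$ is $1$-connected, the associated Serre spectral sequence of $\FI$-$\bZ$-modules
\[E_2^{p,q} = H^p(P_{n-1}(X), H^q(B^n \pi_n(X))) \Rightarrow H^{p+q}(P_n(X))\]
has trivial local coefficients, so by Corollary \ref{corssfg} the inductive step reduces to showing that $H^q(B^n \pi_n(X))$ is a finitely generated $\FI$-$\bZ$-module for every $q$.

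To handle $H^*(B^n \pi_n(X))$, I would use the natural short exact sequence of co-$\FI$-$\bZ$-modules
\[0 \to \pi_n(X)[\mr{tors}] \to \pi_n(X) \to \pi_n(X)[\mr{free}] \to 0,\]
which is well-defined since each $\pi_n(X_k)$ is a finitely generated abelian group under the standing finite type hypothesis. Applying a bar construction model of $B^n$ produces a fibration sequence of co-$\FI$-spaces
\[B^n \pi_n(X)[\mr{tors}] \to B^n \pi_n(X) \to B^n \pi_n(X)[\mr{free}],\]
whose base is $(n-1)$-connected and hence simply-connected as $n \geq 2$, so its Serre spectral sequence again has trivial local coefficients. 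The crucial observation is that for a finitely generated abelian group $A$ there are natural isomorphisms $A[\mr{free}] \cong \Homz{\Homz{A}}$ and $A[\mr{tors}] \cong \Extz{\Extz{A}}$. Applied levelwise, these give isomorphisms of co-$\FI$-$\bZ$-modules $\pi_n(X)[\mr{free}] \cong \Homz{\Homz{\pi_n(X)}}$ and $\pi_n(X)[\mr{tors}] \cong \Extz{\Extz{\pi_n(X)}}$. Since $\Homz{\pi_n(X)}$ and $\Extz{\pi_n(X)}$ are finitely generated $\FI$-$\bZ$-modules by hypothesis, Proposition \ref{EM} applied respectively to $F = \Homz{\pi_n(X)}$ and $F = \Extz{\pi_n(X)}$ shows that the cohomologies $H^*(B^n \pi_n(X)[\mr{free}])$ and $H^*(B^n \pi_n(X)[\mr{tors}])$ are finitely generated $\FI$-$\bZ$-modules. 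A first application of Corollary \ref{corssfg} then gives finite generation of $H^*(B^n \pi_n(X))$, and a second application to the outer spectral sequence closes the induction. Repeating these arguments levelwise yields finite generation of $H^i(P_n(X_k))$ as an abelian group, and the co-$\FI\#$ version is formally identical because Proposition \ref{EM} is stated in both settings.

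The main technical obstacle I anticipate is ensuring that the torsion/free short exact sequence and the double-dualization identifications are sufficiently natural at the level of semistrict co-$\FI$-spaces, so that applying the bar construction yields a semistrict map to which Propositions \ref{propsemistrictfiber} and \ref{propsemistrictfibernat} apply and from which a well-behaved Serre spectral sequence of $\FI$-$\bZ$-modules can be extracted. This is resolved by the functoriality of $A \mapsto A[\mr{tors}]$ and $A \mapsto A/A[\mr{tors}]$ on finitely generated abelian groups, which makes both the short exact sequence and the double-dualization maps genuine natural transformations and hence maps of co-$\FI$-$\bZ$-modules.
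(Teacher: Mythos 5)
Your proposal is correct and matches the paper's own proof in all essentials: the induction on $n$ starting from the contractibility of $P_1(X)$, the identification of $H^*(\mr{hofib}(p_n^{n-1}))$ with $H^*(B^n\pi_n(X))$ via Proposition \ref{fiberisB}, the torsion/free decomposition of $\pi_n(X)$ rewritten through double duals $\Extz{\Extz{\pi_n(X)}}$ and $\Homz{\Homz{\pi_n(X)}}$, and the applications of Proposition \ref{EM} and Corollary \ref{corssfg}. The only thing you make more explicit than the paper is the intermediate Serre spectral sequence for $B^n\pi_n(X)[\mr{tors}] \to B^n\pi_n(X) \to B^n\pi_n(X)[\mr{free}]$, which the paper leaves implicit in the phrase ``it suffices to show.''
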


\begin{proof}We again only give the proof for co-$\FI$-spaces and $\FI$-$\bZ$-modules. We will proceed via an induction on $n$. Since $X$ is $1$-connected, $P_1(X)$ is contractible so the case $n=1$ is trivial. Suppose for the purposes of induction we have shown that $H^i(P_{n-1} (X))$ is finitely generated for all $i$. Using Proposition \ref{fiberisB}, we can identify the cohomology of the homotopy fiber of $p^{n-1}_n: P_n(X) \to P_{n-1}(X)$ with the cohomology of $B^n \pi_n(X)$. If we can show that $H^i(B^n \pi_n(X))$ is a finitely generated $\FI$-$\Z$-module for all $i$, the induction step will follow by applying Corollary \ref{corssfg} to the Serre spectral sequence.

Because we are dealing with finitely generated abelian groups, the short exact sequence \[0 \to \pi_n(X)[\mr{tors}] \to \pi_n(X) \to \pi_n(X)[\mr{free}] \to 0\] can naturally be identified with \[0 \to  \Extz{\Extz{\pi_n(X)}} \to \pi_n(X) \to \Homz{\Homz{\pi_n(X)}} \to 0\] Thus it suffices to show that the cohomology groups of 
\[B^n  \Extz{\Extz{\pi_n(X)}} \qquad \text{and} \qquad B^n  \Homz{\Homz{\pi_n(X)}}\] are finitely generated $\FI$-$\Z$-modules. By assumption, $\Homz{\pi_n(X)}$ and $\Extz{\pi_n(X)}$ are finitely generated $\FI$-$\Z$-modules. By Proposition \ref{EM}, the cohomology of both $B^n \Extz{\Extz{\pi_n(X)}}$ and  $B^n \Homz{\Homz{\pi_n(X)}}$ consists of finitely generated $\FI$-$\Z$-modules.\end{proof}

The following is exactly the integral part of Theorem \ref{thmequiv}.

\begin{theorem}\label{thmequivint}
Let $X$ be a $1$-connected semistrict co-$\FI$-space. Then $H^i(X)$ is a finitely generated $\FI$-$\Z$-module for all $i$ if and only if $\Homz{\pi_i(X)}$ and $\Extz{\pi_i(X)}$ are finitely generated $\FI$-$\Z$-modules for all $i$. The same is true for co-$\FI\#$-spaces and $\FI\#$-$\bZ$-modules.
\end{theorem}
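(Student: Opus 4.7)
The plan is to derive Theorem \ref{thmequivint} as a clean consequence of the two Whitehead/Postnikov computations in Lemmas \ref{whitehead} and \ref{lempostnikov}, using the universal coefficient theorem in one direction and the edge homomorphism of a Serre spectral sequence in the other. Both directions should work identically for co-$\FI$-spaces and co-$\FI\#$-spaces, since each auxiliary result cited is available in both settings.

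For the forward implication, assume $H^i(X)$ is a finitely generated $\FI$-$\bZ$-module for every $i$. Lemma \ref{whitehead} then implies that each $H^i(W_n(X))$ is a finitely generated $\FI$-$\bZ$-module. Since $W_{n-1}(X)$ is $(n-1)$-connected, naturality of the Hurewicz map yields an isomorphism of co-$\FI$-$\bZ$-modules $\pi_n(X) \cong \pi_n(W_{n-1}(X)) \cong H_n(W_{n-1}(X))$. The universal coefficient theorem, being natural, supplies short exact sequences of $\FI$-$\bZ$-modules
\[0 \to \Extz{H_{n-1}(W_{n-1}(X))} \to H^n(W_{n-1}(X)) \to \Homz{H_n(W_{n-1}(X))} \to 0\]
and
\[0 \to \Extz{H_n(W_{n-1}(X))} \to H^{n+1}(W_{n-1}(X)) \to \Homz{H_{n+1}(W_{n-1}(X))} \to 0.\]
The Ext term in the first sequence vanishes by connectivity, so $\Homz{\pi_n(X)}$ is isomorphic to the finitely generated $\FI$-$\bZ$-module $H^n(W_{n-1}(X))$. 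In the second sequence, $\Extz{\pi_n(X)}$ sits as a sub-$\FI$-$\bZ$-module of the finitely generated $H^{n+1}(W_{n-1}(X))$, and since $\bZ$ is Noetherian, Proposition \ref{propfgprops}(iii) gives its finite generation.

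For the converse, suppose that $\Homz{\pi_i(X)}$ and $\Extz{\pi_i(X)}$ are finitely generated for all $i$. Lemma \ref{lempostnikov} then yields finite generation of every $H^i(P_n(X))$. Applying Proposition \ref{prophofibstruct} to $p_n: X \to P_n(X)$ realizes $W_n(X) \to X \to P_n(X)$ as a fibration of semistrict co-$\FI$-spaces, whence the naturality discussion of Section \ref{secbasepoints} produces a Serre spectral sequence of $\FI$-$\bZ$-modules
\[E_2^{p,q} = H^p(P_n(X); H^q(W_n(X))) \Rightarrow H^{p+q}(X).\]
Because $W_n(X)$ is $n$-connected, $H^q(W_n(X)) = 0$ for $1 \leq q \leq n$, so the edge homomorphism $H^p(P_n(X)) \to H^p(X)$ is an isomorphism for $p \leq n$. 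Taking $n = i$ gives $H^i(X) \cong H^i(P_i(X))$, which is finitely generated.

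The hard work has already been done in Lemmas \ref{whitehead} and \ref{lempostnikov} and in the base-point bookkeeping of Section \ref{secbasepoints}; the main obstacle, namely ensuring every comparison map truly respects the $\FI$-structure (so that the universal coefficient sequences, Hurewicz isomorphisms, and Serre spectral sequence live in $\FI$-$\bZ$-modules rather than merely in abelian groups), has been addressed there. The $\FI\#$-case is identical, since Lemmas \ref{whitehead} and \ref{lempostnikov} each include that version, and every other ingredient used is purely natural.
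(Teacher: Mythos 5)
Your proof is correct and follows essentially the same route as the paper: Lemma \ref{whitehead} plus Hurewicz and the universal coefficient theorem for the forward direction, and Lemma \ref{lempostnikov} for the converse. Two small remarks: your indexing $\Homz{\pi_n(X)}\cong H^n(W_{n-1}(X))$ is the careful version (the paper's $H^i(W_i(X))$ is off by one, since $W_i(X)$ is $i$-connected); and in the converse you invoke a Serre spectral sequence for $W_n(X)\to X\to P_n(X)$ where the paper simply observes that the semistrict map $p_n\colon X\to P_n(X)$ is a cohomology isomorphism in degrees $\le n$ by connectivity -- this shortcut avoids having to re-establish the fibration structure via Proposition \ref{prophofibstruct}, but your spectral sequence argument reaches the same conclusion.
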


\begin{proof}We again only give the proof for co-$\FI$-spaces and $\FI$-$\bZ$-modules. First assume that $H^i(X)$ is finitely generated for all $i$. By Lemma \ref{whitehead}, $H^i(W_n(X))$ is finitely generated for all $i \geq 0$ and $n \geq 1$. By the universal coefficient theorem and the Hurewicz isomorphism, $H^i(W_i(X)) \cong \Homz{\pi_i(X)}$ and so $\Homz{\pi_i(X)}$ is finitely generated for all $i$. Again by the universal coefficient theorem and the Hurewicz isomorphism, $\Extz{\pi_i(X)}$ injects into $H^{i+1}(W_i(X))$. By Proposition \ref{propfgprops}, $\Extz{\pi_i(X)}$ is finitely generated. 

Conversely assume that $\Homz{\pi_i(X)}$ and $\Extz{\pi_i(X)}$ are finitely generated for all $i$. For $n \geq i+1$, the map $X \to P_n(X)$ induces an isomorphism $H^i(X) \to H^i(P_n(X))$. By Lemma \ref{lempostnikov},  $H^i(P_n(X))$ is finitely generated.
\end{proof}

\subsection{Rational portion of the main theorem}
\label{secequiv}
The previous proof also works in the rational case. However, to get better ranges, we instead opt to work with the Eilenberg-Moore spectral sequence, the Milnor-Moore theorem and Quillen's rational homotopy theory.

\begin{digression}[Naturality of the Eilenberg-Moore spectral sequence] Let $\pi: E \to B$ be a Serre fibration with $B$ a $1$-connected space. The Eilenberg-Moore spectral gives us for a pullback square
\[\xymatrix{E_\pi \ar[r] \ar[d] & E \ar[d]^\pi \\
X \ar[r] & B}\]
a spectral sequence 
\[E_2^{p,q} = \mr{Tor}^{p,q}_{H^*(B)}(H^*(E),H^*(X)) \Rightarrow H^{p+q}(E_\pi)\]
which is natural in maps of such commuting diagrams. Here cohomology is graded negatively, so that this is a second-quadrant spectral sequence. We will be interested in the case
\[\xymatrix{\Omega_* X \ar[r] \ar[d] & P_* X \ar[d]^{\mr{ev}_1} \\
\ast \ar[r] & X}\]
where $\Omega_*X$ are the loop based at $*$ and $P_* X$ are the paths starting at $*$. In particular, we want to apply this to a $1$-connected semistrict co-$\FI$-space $X$ with an arbitrary base point $*_k \in X_k$.
\end{digression}

It is not hard to see that the same techniques as for the Serre spectral sequence allow one to make the Eilenberg-Moore spectral sequences into a spectral sequence of $\FI$-$\bQ$-modules, where the $\FI$-$\bQ$-module structure is induced by that on $H^*(X;\bQ)$. This is the main tool in the proof of the following proposition.

\begin{proposition}\label{proplooprat} Let $X$ be a $1$-connected semistrict co-$\FI$-space with $H^i(X;\Q)$ having weight $\leq c_1 i$ and stability degree $\leq c_2i$. Then $\Omega X$ can again be given the structure of semistrict co-$\FI$-space and $H^i(\Omega X;\bQ)$ will have weight $\leq 2c_1i$ and stability degree $\leq 2(c_1+c_2)i$. If we suppose that $c_1 = c = c_2$, this can be improved to weight $\leq 2ci$ and stability degree $\leq 2ci$. Similarly, if $X$ is a $1$-connected semistrict co-$\FI\#$-space with $H^i(X;\bQ)$ with weight $\leq c i$, then $H^i(\Omega X;\bQ)$ will have weight $\leq 2c i$.\end{proposition}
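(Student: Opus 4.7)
The plan is to apply the Eilenberg--Moore spectral sequence together with the tensor-product estimates of Proposition \ref{proptensorrat}. First I would construct the semistrict co-$\FI$-structure on $\Omega X$ by applying Proposition \ref{propsemistrictfiber} to the path-space Hurewicz fibration $\mr{ev}_1: PX \to X$: taking $PX_k$ to be paths in $X_k$ starting at a chosen basepoint $*_k$, the pointwise construction makes $PX$ into a semistrict co-$\FI$-space over $X$ with compatible standard path-lifting functions, so the fiber $\Omega X$ inherits a semistrict co-$\FI$-structure.

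Next, the Eilenberg--Moore spectral sequence
\[E_2^{-s,t} = \mr{Tor}^{-s,t}_{H^*(X;\bQ)}(\bQ,\bQ) \Rightarrow H^{t-s}(\Omega X;\bQ)\]
is, as indicated in the preceding digression, a spectral sequence of $\FI$-$\bQ$-modules. Computed via the bar resolution, $E_2^{-s,t}$ is a subquotient of a direct sum of tensor products $H^{i_1}(X;\bQ) \otimes \cdots \otimes H^{i_s}(X;\bQ)$ with $i_j \geq 2$ (using $1$-connectedness of $X$) and $\sum_j i_j = t$. Setting $n = t - s$, the constraint $t \geq 2s$ forces $s \leq n$, so $t = n + s \leq 2n$.

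For weight, Proposition \ref{proptensorgeneweight}(ii) bounds each tensor product's weight by $c_1 t \leq 2c_1 n$, and this bound passes to $E_2^{-s,t}$ and then through the spectral sequence to $H^n(\Omega X;\bQ)$ by Propositions \ref{propkerimrat}(ii) and \ref{propextrat}(ii). For stability degree, Proposition \ref{proptensorrat} bounds the tensor product's stability degree by $\max_j((c_1+c_2) i_j,\; c_1 t) \leq (c_1+c_2) t \leq 2(c_1+c_2) n$, propagated analogously by Propositions \ref{propkerimrat}(i) and \ref{propextrat}(i). The refined bound when $c_1 = c_2 = c$ comes from observing that for $s \geq 2$ and $s \leq n$ one has $\max_j i_j \leq t - 2(s-1)$, so $2c \max_j i_j \leq 2c(n - s + 2) \leq 2cn$ while $c_1 t = c(n+s) \leq 2cn$; the case $s = 1$ uses only the single-factor stability degree bound $ct = c(n+1) \leq 2cn$ for $n \geq 1$. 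The $\FI\#$ statement is an immediate consequence of the same spectral-sequence argument, since the tensor product of $\FI\#$-modules remains $\FI\#$ and weight is additive over tensor products.

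The main obstacle is verifying that $E_2$ carries the expected $\FI$-$\bQ$-module structure and that the spectral sequence is natural in morphisms of $\FI$ despite the basepoint subtleties of the Eilenberg--Moore construction. The preceding digression asserts this can be established by the same techniques used for the Serre spectral sequence, and I would work it out explicitly by following the analogous zigzag argument using the $1$-connectedness of each $X_k$ for basepoint independence.
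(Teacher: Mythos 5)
Your proposal follows the paper's proof essentially exactly: construct the semistrict co-$\FI$-structure on $\Omega X$ via Proposition \ref{propsemistrictfiber} applied to the path fibration, form the Eilenberg--Moore spectral sequence of $\FI$-$\bQ$-modules, identify the reduced bar $E_1$-page with tensor powers of $\tilde{H}^*(X;\bQ)$, exploit $1$-connectedness to bound the cohomological degree $t$ by $2n$ on the line $t-s=n$, feed the tensor-product estimates from Propositions \ref{proptensorgeneweight} and \ref{proptensorrat}, and propagate via Propositions \ref{propkerimrat} and \ref{propextrat}. The case analysis for the improved bound when $c_1=c_2=c$ (separating $s=1$ from $s\geq 2$ and using $\max_j i_j \leq t-2(s-1)$) also matches the paper's ``$2c\max(2,i)$'' observation.

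One point where the word ``analogously'' is doing too much work: propagating weight through the spectral sequence is immediate because Proposition \ref{propkerimrat}(ii) only constrains the middle term of $F\to G\to H$, so the bound $c_1 t \leq 2c_1 n$ on $E_1^{-s,t}$ passes to all subquotients at $(-s,t)$ with no further argument. Propagating stability degree via Proposition \ref{propkerimrat}(i) instead requires a bound on \emph{all three} terms, and the source and target of $d_r$ at $(-s,t)$ lie on the lines $n\mp 1$, not on line $n$. One must therefore argue that every $E_1$-position reachable by repeatedly passing to sources and targets (while remaining in the region $t'\geq 2s'$ where entries are nonzero) still satisfies $t'\leq 2n$. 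The paper sketches exactly this via the constraint $(-p',q')=(-p,q)-\sum_n(r_n,1-r_n)$ and $1$-connectedness; in your write-up this step is asserted but not justified. The conclusion is correct, but this is the one place where the argument is genuinely not a mirror image of the weight case and deserves an explicit sentence.
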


\begin{proof}As remarked in the previous discussion, the Eilenberg-Moore spectral sequence is a second quadrant spectral sequence of $\FI$-$\bQ$-modules given by
\[E_2^{p,q} = \mr{Tor}^{p,q}_{H^*(X;\bQ)}(\bQ,\bQ) \Rightarrow H^{p+q}(\Omega X;\bQ)\]
where the $\FI$-$\bQ$-module structure is induced by that of $H^*(X;\bQ)$. 

The reduced bar construction gives a choice of $E_1$-page with $E_1^{0,0} = \Q$, $E_1^{0,q}=0$ for $q>0$ and $E_1^{-p,q}$ equal to the degree $q$ part of $\tilde{H}^*(X,\bQ)^{\otimes p}$ if $p$ is positive. This has a differential $E_1^{-p,q} \to E_1^{-p+1,q}$ given by an alternating sum of multiplications. Note that this is $E_1$-page respects the $\FI$-module structure. We note that by Proposition \ref{proptensorrat}, $E_1^{-p,q}$ has weight $\leq c_1 q$ and stability degree $\leq (c_1+c_2)q$. 

Since $X$ is 1-connected, $E_{r}^{-p,q} \cong 0$ unless $q \geq 2p$. Similarly $E_{r}^{p,q} \cong 0$ for $p \geq 0$ unless $(p,q) = (0,0)$. This implies that on the lines of constant $-p+q$, we have that $E_{r}^{-p,q}$ stabilizes when $r = -p+q$. Note that taking homology does not increase weight. Since $E_1^{-p,q}$ has weight $\leq c_1 q$, $E_r^{-p,q}$ also has weight $\leq c_1 q$ for all $r \geq 1$. Thus the worst weight on lines of constant $-p+q$ occurs at $E_{r}^{-(-p+q),2(-p+q)}$ and is given by $\leq 2c_1(-p+q)$.

By Proposition \ref{proptensorrat}, the stability degree of $E_{r+1}^{-p,q}$  will be bounded by the maximum of the stability degrees of $E_r^{-p+r,q-r+1}$, $E_{r}^{-p,q}$ and $E_r^{-p-r,q+r-1}$. We will prove that on lines of constant $-p+q$, the worst stability degree always occurs at $E_{r}^{-(-p+q),2(-p+q)}$. This worst stability degree is given by $\leq 2(c_1+c_2)(-p+q)$ for all $r$, since there are no differentials going in and all differentials go to entries with better stability degree. To see this, note that $E_{r}^{-p,q}$ is certainly bounded by the maximum of the stability degrees of $E_{1}^{-p',q'}$ with $(-p',q')=(-p,q)-\sum_n (r_n,1-r_n)$ over all finite length sequences $\{r_n\}$ with $r_n \geq 1$. Since $(1-r_n)/(r_n) \geq -1$, if $E_{1}^{-p',q'}$ is not zero we must have $q' \leq 2(-p+q)$.

If $-p+q=i$, these results show that $E_{\infty}^{-p,q}$ has weight $\leq 2c_1 i$ and stability degree $\leq 2(c_1+c_2)i$. To get from a result about the $E_{\infty}$-page to a result about $H^i(\Omega X;\bQ)$, we use Proposition \ref{propextrat}. 

If $c_1 = c = c_2$, we can drop these ranges to $\leq 2c i$. In fact, it is $2c\max(2,i)$ a priori, but in the cases $i=0$ or $1$, the cohomology groups are $0$ by the $1$-connectedness assumption.\end{proof}

\begin{digression}[The naturality of the Milnor-Moore theorem] The Milnor-Moore theorem \cite[page 263]{milnormoore} says in particular that if $X$ is a finite type $1$-connected space with base point $*$, then 
\[U(\pi_{*+1}(X,*) \otimes \bQ) \to H_*(\Omega X;\bQ)\]
is an isomorphism of Hopf algebras, where $U$ is the universal enveloping algebra and the Whitehead bracket makes $\pi_{*+1}(X,*) \otimes \bQ$ into a Lie algebra. This implies there are isomorphisms $\pi_{*+1}(X,*) \otimes \bQ \cong \mr{Prim}( H_*(\Omega X;\bQ))$ and $\mr{Hom}_\bZ(\pi_{*+1}(X,*),\bQ) \cong \mr{Ind}( H^*(\Omega X;\bQ))$, where $\mr{Prim}$ denotes primitives and $\mr{Ind}$ indecomposables. All these isomorphism are natural in pointed maps, but by the same trick as for the Serre and Eilenberg-Moore spectral sequence, can be made natural for maps between $1$-connected spaces that do not preserve the base point.
\end{digression}

\begin{digression}[The naturality of the Quillen spectral sequence] In \cite{quillenrat}, Quillen described a functor $\lambda$ from pointed finite type $1$-connected spaces to the category of differential-graded Lie algebras over $\bQ$. This has the property that the $i$th homology of $\lambda(X)$ is naturally isomorphic to $\pi_{i+1}(X) \otimes \bQ$, and the $i$th dg-Lie algebra cohomology $H^i_{CE}(\lambda(X))$ of $\lambda(X)$ is naturally isomorphic to $H^i(X;\bQ)$. The dg-Lie algebra cohomology can be defined using the Chevalley-Eilenberg complex. Filtering this complex by the homological degree of $\lambda(X)$, we obtain the Quillen spectral sequence
\[E^{p,q}_2 = H^q_{CE}(\pi_{p+1}(X) \otimes \bQ) \Rightarrow H^{p+q}(X;\bQ)\]
where the Lie bracket on $\pi_{*+1}(X) \otimes \bQ$ is given by the Whitehead bracket. The groups $H^*_{CE}(\pi_{*+1}(X) \otimes \bQ)$ are naturally isomorphic to the homology a chain complex whose underlying graded vector spaces are given by the free graded-commutative algebra on $\mr{Hom}_\bZ(\pi_{*}(X),\bQ)$, with differential coming from the Lie bracket. All of this is natural in pointed maps. Like the Serre and Eilenberg-Moore spectral sequences and the Milnor-Moore theorem, the Quillen spectral sequence can be made natural for maps between $1$-connected spaces that do not preserve the base point.
\end{digression}

The following is the rational part of Theorem \ref{thmequiv}.

\begin{theorem}\label{thmequivrat} Let $X$ be a $1$-connected semistrict co-$\FI$-space.
\begin{enumerate}[(i)]
\item Suppose that $H^i(X;\Q)$ has weight $\leq c_1 i$ and stability degree $\leq c_2i$. Then $\mr{Hom}_{\bZ}(\pi_i(X),\bQ)$ has weight $\leq 2c_1(i-1)$ and stability degree $\leq (4c_1+2c_2)(i-1)$. If $c_1 = c = c_2$ we can improve this to weight $\leq 2c(i-1)$ and stability degree $\leq 4c(i-1)$.
\item Suppose $\mr{Hom}_\bZ(\pi_i(X),\bQ)$ has weight $\leq c_1i$ and stability degree $\leq c_2i$. Then $H^i(X;\bQ)$ has weight $\leq c_1i$ and stability degree $\leq (c_1+c_2)(2i+1)$. If $c_1 = c = c_2$, this can be improved to weight $\leq ci$ and stability degree $\leq c(2i+1)$.
\end{enumerate}
The same is true for $\FI\#$-$\bQ$-modules, and in that case only weight is relevant.\end{theorem}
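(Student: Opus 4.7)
The plan is to replace the Postnikov/Whitehead machinery of the integral case by more efficient tools: the Eilenberg--Moore spectral sequence together with the Milnor--Moore theorem for Part (i), and the Quillen spectral sequence for Part (ii). The $\FI\#$ variant of each part is a shadow of the $\FI$ argument, because under $\FI\#$ only weight is relevant (Proposition \ref{propdegweight}), and weight is additive over tensor products and preserved under quotients and kernel/image (Propositions \ref{proptensorgeneweight} and \ref{propkerimrat}(ii)).

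For Part (i), I would first apply Proposition \ref{proplooprat} to obtain bounds on $H^*(\Omega X;\bQ)$: in degree $i$ its weight is $\leq 2c_1 i$ and its stability degree is $\leq 2(c_1+c_2)i$, both sharpening to $\leq 2ci$ when $c_1 = c = c_2$. The Milnor--Moore theorem recalled in the preceding discussion provides a natural identification
\[\mr{Hom}_\bZ(\pi_i(X),\bQ) \cong \mr{Ind}^{i-1}(H^*(\Omega X;\bQ))\]
of $\FI$-$\bQ$-modules, where $\mr{Ind}^{i-1}$ is the cokernel of the cup product
\[\mu\colon \bigoplus_{\substack{a+b=i-1\\ a,b\geq 1}} H^a(\Omega X;\bQ) \otimes H^b(\Omega X;\bQ) \longrightarrow H^{i-1}(\Omega X;\bQ).\]
Applying Proposition \ref{propkerimrat} to the three-term complex $(\text{source of }\mu) \to H^{i-1}(\Omega X;\bQ) \to 0$, weight is inherited from $H^{i-1}(\Omega X;\bQ)$, giving $\leq 2c_1(i-1)$. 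For stability degree, Proposition \ref{proptensorrat} computes each tensor summand's stability degree as $\leq \max((4c_1+2c_2)\max(a,b),\,2c_1(i-1)) \leq (4c_1+2c_2)(i-2)$ since $\max(a,b) \leq i-2$, and direct sums preserve stability degree by Proposition \ref{propextrat}(i). Combining with the $2(c_1+c_2)(i-1)$ bound on $H^{i-1}(\Omega X;\bQ)$ then yields the claimed stability degree $\leq (4c_1+2c_2)(i-1)$, which collapses to $4c(i-1)$ in the symmetric case.

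For Part (ii), I would instead analyse the Quillen spectral sequence
\[E_2^{p,q} = H^q_{CE}(\pi_{p+1}(X)\otimes\bQ) \;\Rightarrow\; H^{p+q}(X;\bQ),\]
whose $E_2$-page entries are assembled as $\FI$-$\bQ$-modules from free graded-commutative (hence tensor-product) expressions in $\mr{Hom}_\bZ(\pi_*(X),\bQ)$. Proposition \ref{proptensorrat} controls the weight and stability degree of each such tensor product, and in characteristic zero these bounds descend to their direct summands (Schur functors) and to finite direct sums. Subsequent differentials are handled by Proposition \ref{propkerimrat}(i): each $E_{r+1}^{p,q}$ is the $\ker/\mr{im}$ of a three-term sequence whose entries lie in the total degrees $p+q-1,\, p+q,\, p+q+1$, so the $E_2$-bounds persist on all later pages up to a bounded additive shift. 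Convergence reconstructs $H^i(X;\bQ)$ as a finite filtration of the $E_\infty^{p,q}$ with $p+q=i$, across which Proposition \ref{propextrat} preserves both weight and stability degree. Careful accounting of the constants then delivers weight $\leq c_1 i$ and stability degree $\leq (c_1+c_2)(2i+1)$, specialising to $c(2i+1)$ in the symmetric case.

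The main obstacle is the bookkeeping of stability degree across spectral sequences: it is not preserved by arbitrary subobjects or quotients, only by the kernel-mod-image of a three-term sequence whose outer terms are themselves bounded (Proposition \ref{propkerimrat}(i)). This is precisely what forces the tensor-product estimate to enter as $r_l + s_l = (c_1+c_2) j_l$ rather than merely $c_2 j_l$, and accounts for both the $(4c_1+2c_2)$ coefficient in Part (i) and the $(2i+1)$ factor in Part (ii). The $\FI\#$ case avoids this entirely because weight is preserved under both submodules and quotients, so no spectral-sequence spread occurs and both parts collapse to a single additivity computation.
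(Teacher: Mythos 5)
Your proposal follows essentially the same route as the paper: Proposition \ref{proplooprat} plus Milnor--Moore for Part (i), the Quillen spectral sequence for Part (ii), and for the $\FI\#$ variant the reduction to weight alone via Proposition \ref{propdegweight}. The bookkeeping in Part (i) is correct (and in fact slightly sharper at the tensor step, using $\max(a,b)\le i-2$, though the final bound is unchanged). The one soft spot is the end of Part (ii): ``careful accounting of the constants then delivers... stability degree $\le (c_1+c_2)(2i+1)$'' is asserted rather than derived; the paper makes the same step slightly more explicit by tracking that passing from the Chevalley--Eilenberg cochains to their cohomology raises $(c_1+c_2)i$ to $(c_1+c_2)(i+1)$, and that the spectral sequence differentials then double the slope to $(c_1+c_2)(2i+1)$, paralleling the degree count in the proof of Proposition \ref{proplooprat}. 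That arithmetic is the whole content of the stability-degree claim in Part (ii), so it is worth spelling out, but your overall approach and use of Propositions \ref{proptensorrat}, \ref{propkerimrat}, and \ref{propextrat} match the paper.
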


\begin{proof}The Milnor-Moore theorem says that $\mr{Hom}_{\bZ}(\pi_{i+1}(X),\bQ)$ is given by the degree $i$ indecomposables of $A:= H^i(\Omega X;\bQ)$, i.e. the degree $i$ part of the cokernel of the map $A_+^{\otimes 2} \to A$. We computed the weight and stability degree of $A$ in Proposition \ref{proplooprat}. It now suffices to remark that the degree $i$ part of $A_+^{\otimes 2}$ has weight $\leq 2c_1i$ and stability degree $\leq (4c_1+2c_2)i$ by Proposition \ref{proptensorrat}. If $c_1 = c = c_2$ this drops to $\leq 2ci$ and $\leq 4ci$ respectively.

Quillen's approach to rational homotopy theory gives us a spectral sequence computing $H^*(X;\bQ)$ from the graded-commutative algebra on $\mr{Hom}_\bZ(\pi_{*}(X),\bQ)$. In degree $i$ the free graded commutative algebra then has weight $\leq c_1 i$ and stability degree $\leq (c_1+c_2)i$. A similar argument about the stability ranges of various entries of the spectral sequence as the one in Proposition \ref{proplooprat} gives the desired result. More precisely, we first lose some range when computing the cohomology of the Chevalley-Eilenberg complex; it goes down from $\leq (c_1+c_2)i$ to $\leq (c_1+c_2)(i+1)$. Secondly the possibility of differentials in the spectral sequence makes it go down to $\leq (c_1+c_2)(2i+1)$.
\end{proof}

\begin{remark}The previous theorem takes linear ranges as input, but has affine ranges as output. However, since by assumption $H^0$ is isomorphic to $\bQ$ and $H^1$ is $0$, an affine range of $\leq ci+a$ implies a linear range of $\leq (c+\frac{a}{2})i$ if $a>0$ and $\leq ci$ if $a \leq 0$. This will allow us to iterate the theorem.\end{remark}

\section{Applications}\label{secap}
In this section we discuss applications of Theorem \ref{thmequiv}, both in topology and algebra.

\subsection{Configuration spaces} 
\label{subconfig}

Let $F(M)$ be the co-$\FI$-space whose value on a set is the space of embeddings of that set into $M$. When $M$ is non-compact, in addition to deleting points, one can also bring points in from infinity. This allows one to define a semistrict co-$\FI\#$-space structure on $F(M)$ (see Section 6.4 of \cite{CEF}). The following combines Application 2 of \cite{CEFN} and Theorem 6.3.1 of \cite{CEF}. 

\begin{theorem}For $M$ a connected orientable manifold of dimension at least $2$, $H^i(F(M))$ is finitely generated and the weight and stability degree of $H^i(F(M);\Q)$ are $\leq i$.
\label{homconf}
\end{theorem}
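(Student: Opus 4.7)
The plan is to derive the theorem from the Cohen--Taylor--Totaro spectral sequence associated with the open inclusion $F_\bullet(M) \hookrightarrow M^\bullet$, combined with the $\FI$-module machinery of Section \ref{secFI}. For an orientable $d$-manifold $M$, this takes the form
\[E_2^{p,q} = H^p(M^\bullet;\mathcal{H}^q(F_\bullet(\bR^d))) \Longrightarrow H^{p+q}(F_\bullet(M)),\]
with the local system trivial by orientability. Since every step of its construction is natural in labelled inclusions of finite sets into $M$, the whole spectral sequence assembles into one of $\FI$-$\bZ$-modules, and refines to a spectral sequence of $\FI\#$-$\bZ$-modules when $M$ is non-compact using the $\FI\#$-structure on $F(M)$ recalled in Section 6.4 of \cite{CEF}.

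The first step is to analyse the $E_2$-page. By K\"unneth each $E_2^{p,q}$ is a tensor product of an $\FI\#$-module built from $H^*(M)$, of weight $\leq p$, with a piece of the Arnold algebra $H^*(F_\bullet(\bR^d))$. As an $\FI\#$-module the Arnold algebra is generated by the single class $G_{12}$ in bidegree $(2,d-1)$; the Arnold three-term relations yield a forest basis which one can then use to bound the weight and stability degree of $H^q(F_\bullet(\bR^d))$ linearly in $q$. Applying Proposition \ref{proptensorrat} on the $E_2$-page, then propagating through the spectral sequence via Propositions \ref{propkerimrat}, \ref{propextrat} and Corollary \ref{corssfg}, gives the bound on weight and stability degree of $H^i(F(M);\bQ)$. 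Finite generation of $H^i(F(M))$ as an $\FI$-$\bZ$-module follows by combining the same argument with the Noetherian property of $\FI$-$\bZ$-modules (Theorem A of \cite{CEFN}), which lets finite generation propagate from $E_2$ to the abutment without any quantitative bookkeeping.

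The main obstacle is squeezing out the sharp bound weight $\leq i$ in cohomological degree $i$. For $d \geq 3$ the degree $d-1$ of the Arnold classes is large enough that a crude count of indices from the forest basis immediately yields this bound. When $d=2$, however, the Arnold classes live in degree $1$ and the naive count only gives weight $\leq 2i$; extra care is needed here, and one must use the three-term relations more carefully to cut down which irreducibles $V(\lambda)_n$ with $|\lambda|>i$ can actually appear in a given cohomological degree, as done in Theorem 6.3.1 of \cite{CEF}. A secondary point is that in the closed case the $\FI\#$-structure is unavailable, so the naturality of the spectral sequence as a sequence of $\FI$-modules is slightly more delicate; this is handled uniformly via the Leray spectral sequence of the open inclusion and the excision properties of compactly supported cohomology, as in Application 2 of \cite{CEFN}.
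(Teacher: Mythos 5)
The paper gives no actual proof of Theorem \ref{homconf}: its one-sentence justification is simply a citation of Application 2 of \cite{CEFN} (integral finite generation) and Theorem 6.3.1 of \cite{CEF} (rational weight and stability degree). Your proposal is, in substance, an outline of how those cited references establish the result --- the Leray (Cohen--Taylor--Totaro) spectral sequence of $F(M) \hookrightarrow M^\bullet$, the Arnold algebra controlling the $E_2$-page, and Noetherianity of $\FI$-$\bZ$-modules --- so the overall strategy is the right one and tracks the literature rather than being a genuinely different route.

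There is, however, a real gap in your handling of $d=2$, and correspondingly a small error in the paper's statement of the theorem. The bound $\text{weight} \leq i$ does not hold when $\dim M = 2$: already $H^1(F_\bullet(\bR^2);\bQ)$ is the permutation representation of $\fS_n$ on $2$-element subsets, which for $n \geq 4$ contains $V(2)_n$ and hence has weight $2 > 1$. Your proposed remedy --- squeezing out $\text{weight} \leq i$ via the three-term Arnold relations --- cannot work, because those relations first constrain products of Arnold classes in cohomological degree $\geq 2$, and the offending degree-$1$ classes are untouched by them. What Theorem 6.3.1 of \cite{CEF} actually supplies in dimension $2$ is $\text{weight} \leq 2i$; the sharper bound $\leq i$ is available only for $\dim M \geq 3$. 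The statement of Theorem \ref{homconf} should therefore read ``dimension at least $3$'' for the $\leq i$ bounds (or state $\leq 2i$ in dimension $2$). This does not propagate to the rest of the paper, which only invokes the theorem for $1$-connected manifolds of dimension $\geq 3$, but your proposal should either restrict to $d \geq 3$ or settle for the weaker bound in the two-dimensional case rather than claim to recover $\leq i$ there.
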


We now restrict our attention to the case that $M$ is $1$-connected (hence orientable) and of dimension at least $3$. In this case, the fact that codimension of the fat diagonal is at least 3 implies that $F_k(M)$ is $1$-connected for all $k$. Combining Theorem \ref{homconf} with Theorem \ref{thmequiv} gives the following. 

\begin{corollary}Let $M$ be a $1$-connected manifold of dimension at least $3$.
\begin{enumerate}[(i)]
\item The $\FI$-$\Z$-modules $\Homz{\pi_i(F(M))}$ and $\Extz{\pi_i(F(M))}$ are finitely generated.
\item  The $\FI$-$\Q$-module $\mr{Hom}_\Z(\pi_i(F(M)), \bQ)$ has weight $\leq 2(i-1)$ and stability degree $\leq 4(i-1)$. If $M$ is non-compact, the $\FI\#$-$\Q$-module $\mr{Hom}_\Z(\pi_i(F(M)), \bQ)$ has weight $\leq 2(i-1)$.
\end{enumerate}
\end{corollary}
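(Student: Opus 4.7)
The plan is to deduce both parts directly from Theorem \ref{thmequiv} (equivalently, Theorems \ref{thmequivint} and \ref{thmequivrat}) applied to the co-$\FI$-space $F(M)$, using Theorem \ref{homconf} as the input on cohomology. The main setup step is checking the hypotheses: $F(M)$ is a genuine co-$\FI$-space (and a co-$\FI\#$-space when $M$ is non-compact, as recalled in the paragraph preceding Theorem \ref{homconf}), its values are of finite type, and most importantly each $F_k(M)$ is $1$-connected. For the last point, I would cite the fact that $M$ is $1$-connected and that the fat diagonal in $M^k$ has codimension $\dim M \geq 3$, so that general position makes $\pi_0$ and $\pi_1$ of $F_k(M)$ agree with those of $M^k$, which are trivial.

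For part (i), Theorem \ref{homconf} gives that $H^i(F(M);\bZ)$ is a finitely generated $\FI$-$\bZ$-module for every $i$. Since $F(M)$ is a $1$-connected co-$\FI$-space of finite type, Theorem \ref{thmequivint} applies and yields that $\Homz{\pi_i(F(M))}$ and $\Extz{\pi_i(F(M))}$ are finitely generated $\FI$-$\bZ$-modules, which is exactly the content of part (i). No quantitative bookkeeping is needed here.

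For part (ii), the input is again Theorem \ref{homconf}, which now says that $H^i(F(M);\bQ)$ has weight $\leq i$ and stability degree $\leq i$; that is, the hypothesis of Theorem \ref{thmequivrat}(i) holds with $c_1 = c_2 = c = 1$. The improved ``$c_1 = c = c_2$'' branch of that theorem then gives weight $\leq 2(i-1)$ and stability degree $\leq 4(i-1)$ for $\mr{Hom}_{\bZ}(\pi_i(F(M)),\bQ)$, which is the first assertion of (ii). When $M$ is non-compact, $F(M)$ acquires its semistrict co-$\FI\#$-space structure (points may be brought in from infinity), and I would invoke the $\FI\#$ statement of Theorem \ref{thmequivrat}: if $H^i(X;\bQ)$ has weight $\leq ci$ as an $\FI\#$-$\bQ$-module, then $\mr{Hom}_{\bZ}(\pi_i(X),\bQ)$ has weight $\leq 2c(i-1)$ (in the $\FI\#$-setting only weight is relevant, by Proposition \ref{propdegweight}). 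Taking $c=1$ from Theorem \ref{homconf} yields the $\FI\#$-weight bound $\leq 2(i-1)$.

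The proof is essentially an assembly, and the only place that requires any care is the reduction from cohomology to dual homotopy: one must verify that the finite-type, $1$-connected semistrict co-$\FI$(-or-$\FI\#$)-space hypotheses of Theorem \ref{thmequiv} really hold for $F(M)$, so that the Postnikov and Whitehead towers of Section \ref{secbasepoints} and the spectral sequences used in the proofs of Theorems \ref{thmequivint} and \ref{thmequivrat} are available. The main obstacle, if any, is purely a verification obstacle rather than a new argument: one should make sure that the semistrict co-$\FI\#$-structure constructed in \cite{CEF}, Section 6.4, is indeed of the form required to plug into Theorem \ref{thmequivrat} in the $\FI\#$ case. Once that is granted, both parts of the corollary follow immediately.
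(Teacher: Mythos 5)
Your proposal is correct and matches the paper's own reasoning: the paper likewise records that $F_k(M)$ is $1$-connected when $\dim M \geq 3$ (codimension-$\geq 3$ fat diagonal), cites Theorem \ref{homconf} for the cohomological input (finite generation, weight $\leq i$, stability degree $\leq i$), and then simply feeds this into Theorem \ref{thmequiv} — Theorem \ref{thmequivint} for part (i) and the $c_1=c_2=1$ case of Theorem \ref{thmequivrat} (including its $\FI\#$ clause) for part (ii). No divergence from the paper's route.
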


Combining this with Lemma \ref{FIrepstab} and the isomorphism $\pi_i(C_k(M)) \cong \pi_i(F_k(M))$ for $i \geq 2$ proves Theorems \ref{config} and \ref{configint}. 

\begin{example}
We now return to the example of $\pi_{n-1}(C_k(\R^n))$ considered in the introduction. Since $\pi_{n-1}(C_k(\R^n)) \otimes \Q \cong \Q^{k \choose 2}$ for $n \geq 3$, these groups do not stabilize in the traditional sense. However, using the calculations of Cohen in III.4 and III.5 of \cite{CLM}, the techniques of Church and Farb in Chapter 4 of \cite{CF} and the Hurewicz isomorphism, one can compute the representations that appear and observe representation stability. For $k \geq 4$ and $n \geq 3$, we have: $$\pi_{n-1}(C_k(\R^n)) \otimes \Q \cong  \begin{cases}
 V(0)_k \oplus V(1)_k \oplus V(2)_k  & \text{if n even} \\
V(1)_k \oplus V(1 \geq 1)_k & \text{if n odd} \end{cases} $$

\end{example}

\begin{remark}\label{remdim2} For completeness we also note that the higher homotopy groups of configuration spaces of points in $1$-connected manifolds of dimension $1$ and $2$ also stabilize. In dimension $1$, $\R$ is the only $1$-connected manifold and $C_k(\R)$ is contractible. In dimension $2$, there are two $1$-connected manifolds, $\R^2$ and $S^2$. It is well known that $C_k(\R^2)$ is aspherical \cite{FN}. From \cite{FV}, it follows that, for $i \geq 2$, we have $\pi_i(C_k(S^2)) \cong \pi_i(S^2)$ if $k =1$ or $2$ and $\pi_i(C_k(S^2)) \cong \pi_i(S^3)$ for $k \geq 3$. These explicit calculations imply that in low dimensions the higher homotopy groups stabilize in the traditional sense.

%For manifolds of dimension $1$ and $2$ the following is the case for the higher homotopy groups. In the $1$-dimensional case, for all $k \geq 1$ we have that $C_k(\bR)$ is contractible and $C_k(S^1)$ is homotopy equivalent to $S^1$. 

%In the $2$-dimensional case of a connected surface $\Sigma$, one can use the fiber sequences
%\[\Sigma \backslash \{k\text{ pts}\} \to F_{k+1}(\Sigma) \to F_k(\Sigma)\]
%and remark that $\Sigma \backslash \{k\text{ pts}\}$ is homotopy equivalent to a wedge of circles for $k \geq 1$. This implies that $\pi_i(F_{k}(\Sigma)) \cong \pi_i(\Sigma)$ is an isomorphism for $i \geq 3$ and $\pi_2(F_{k+1}(\Sigma)) \to \pi_2(F_k(\Sigma))$ is an injection for all $k$. Thus in these case the higher homotopy groups stabilize as abelian groups. Also remark that unless $\Sigma = S^2$, $\pi_i(\Sigma) = 0$ for $i \geq 2$.
\end{remark}

\subsection{Other fundamental groups}

\label{secG}

In this subsection, we sketch an extension of our results to the homotopy groups of configuration space of points in manifolds with certain non-trivial fundamental groups. In this context, the relevant category is not $\FI$ but $\FI(G)$. 

\begin{definition}
Let $\FI(G)$ denote the category whose objects are free $G$-sets with finitely many orbits and whose morphisms are injective G-equivariant maps.
\end{definition}

From now let $G=\pi_1(M)$ for some fixed manifold $M$ of dimension $\geq 3$. The category $\FI(G)$ is relevant because the groups $\pi_i(C_k(M))$ for $i \geq 2$ naturally assemble to form a co-$\FI(G)$-module. To see this, we will describe a co-$\FI(G)$-space structure on the universal covers of $C_k(M)$, denote by $\smash{\widetilde{C_k(M)}}$. Let $F^{G}(M)$ be the co-$\FI(G)$-space whose value on a $G$-set $X$ is $\mr{Emb}^G(X,\tilde M)$, the space of $G$-equivariant embeddings of $X$ into $\tilde M$, the universal cover of $M$. 

\begin{proposition}

If the dimension of $M$ is at least 3, then the value of $F^{G}(M)$ on the $G^k$ is homeomorphic to $\smash{\widetilde{C_k(M)}}$.

\end{proposition}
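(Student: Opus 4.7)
The plan is to unpack the defining formula and recognize $\mr{Emb}^G(G^k,\widetilde M)$ as the universal cover of $F_k(M)$, which agrees with $\widetilde{C_k(M)}$ since $F_k(M)\to C_k(M)$ is a finite regular covering.

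First I would observe that specifying a $G$-equivariant map $G^k\to\widetilde M$ is the same as specifying the image $\widetilde m_i\in\widetilde M$ of the identity in the $i$th copy of $G$ for $i=1,\ldots,k$. This gives a natural homeomorphism $\mr{Map}^G(G^k,\widetilde M)\cong\widetilde M^k$. Writing $p\colon\widetilde M\to M$ for the covering projection and $p^k\colon\widetilde M^k\to M^k$ for its $k$-fold product, such a $G$-equivariant map is an embedding iff (i) it is injective on each $G$-orbit, which is automatic because $G$ acts freely on $\widetilde M$, and (ii) the orbits $G\cdot\widetilde m_i$ are pairwise disjoint, which is equivalent to requiring that $p(\widetilde m_1),\ldots,p(\widetilde m_k)$ be pairwise distinct points of $M$. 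Hence
\[
\mr{Emb}^G(G^k,\widetilde M)\;\cong\;(p^k)^{-1}(F_k(M))\;\subset\;\widetilde M^k.
\]

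Next I would use that $p^k$ is the universal cover of $M^k$ with deck group $G^k$, so its restriction over $F_k(M)$ is a covering of $F_k(M)$ with deck group $G^k$. To upgrade this to the universal cover, I would invoke the hypothesis $\dim M\geq 3$: the fat diagonal in $M^k$ has codimension $\geq\dim M\geq 3$, and so does its preimage in $\widetilde M^k$. A standard transversality argument shows that removing a closed subset of codimension $\geq 3$ from a connected manifold preserves both connectedness and simple connectivity; since $\widetilde M^k$ is simply connected, so is $(p^k)^{-1}(F_k(M))$, and this covering is therefore the universal cover $\widetilde{F_k(M)}$.

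Finally, the free $\fS_k$-action on $F_k(M)$ exhibits $F_k(M)\to C_k(M)$ as a finite regular covering, so $\widetilde{F_k(M)}\cong\widetilde{C_k(M)}$, completing the identification. The main technical point is the codimension argument for simple connectivity; to avoid it one can instead iterate the Fadell--Neuwirth fibration $F_k(M)\to F_{k-1}(M)$ with fiber $M$ minus $k-1$ points and conclude $\pi_1(F_k(M))\cong G^k$ directly by induction on $k$, reaching the same conclusion.
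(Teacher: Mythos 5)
Your proof is correct and follows essentially the same route as the paper: identify $F^G(M)(G^k)$ as a covering of $F_k(M)$ with deck group $G^k$ (you do so via the explicit pullback from $\widetilde M^k \to M^k$), use the codimension $\geq 3$ of the fat diagonal to conclude it is the universal cover of $F_k(M)$, and observe that $F_k(M)$ and $C_k(M)$ share a universal cover. The paper phrases the last step via the long exact sequence of the fibration with fiber $G^k$ rather than checking simple connectivity of the total space directly, but this is a cosmetic difference.
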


\begin{proof}

The fiber of the natural map $F^G(M)(G^k) \to F_k(M)$ is $G^k$. When $M$ is  at least $3$ dimensional, $G^k \cong \pi_1(F_k(M))$ and $F^G(M)(G^k)$ is $0$-connected. From the long exact sequence of homotopy groups we conclude that $F^{G}(M)(G^k)$ is the universal cover of $F_k(M)$. Since $F_k(M)$ covers $C_k(M)$ and is $0$-connected, we conclude that $F^{G}(M)(G^k)$ is also the universal cover of $C_k(M)$. 
\end{proof}

Since $F^G(M)$ is $1$-connected, its derived dual homotopy groups are naturally $\FI(G)$-modules and these homotopy groups agree with those of unordered configuration spaces. Recall that a group is called \emph{polycyclic} if it has a finite length filtration by normal subgroups whose filtration quotients are cyclic and a group is called \emph{polycyclic-by-finite} if it is has a finite index polycyclic subgroup. In particular, all finite groups as well as $\Z^k$ are polycyclic-by-finite. In \cite{SS2}, Sam and Snowden proved the following analogue of part (iii) of \ref{propfgprops}.

\begin{theorem}
Let $G$ be polycyclic-by-finite and $A$ be a sub-$\FI(G)$-$\Z$-module of $B$. If $B$ is finitely generated, so is $A$.
\label{FIGnoeth}
\end{theorem}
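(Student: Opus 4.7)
The plan is to adapt the Gröbner-basis approach to $\FI$-Noetherianity (Theorem A of \cite{CEFN}) while incorporating the group $G$. First I would reduce to the free case: since $B$ is finitely generated, it is a quotient of a finite direct sum of principal projective $\FI(G)$-$\Z$-modules $P(n,W) := \Z[\mr{Hom}_{\FI(G)}(G^n,-)] \otimes_{\Z[G \wr \fS_n]} W$, and a submodule $A \hookrightarrow B$ pulls back to a submodule of this sum. So it suffices to prove the statement when $B$ is such a principal projective, and in fact it suffices to treat $P(n,\Z[G \wr \fS_n])$ for each fixed $n$, since any $W$ is a quotient of a finite sum of copies of $\Z[G \wr \fS_n]$.

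The core of the argument is a Gröbner/well-quasi-order setup for $\FI(G)$. For $n \leq k$, the morphism set $\mr{Hom}_{\FI(G)}(G^n, G^k)$ decomposes as $\mr{Inj}([n],[k]) \times G^n$. I would put a term order on these morphisms: a lex-like order on the underlying $\FI$-injection together with an auxiliary well-ordering coming from the group $G$. The argument then rests on two inputs, one combinatorial and one algebraic:
\begin{enumerate}[(i)]
\item \textbf{Combinatorial.} The Higman-style well-quasi-order on $\FI$-injections used in \cite{CEFN} extends to a wqo on $G$-equivariant injections, provided $G$ itself admits a compatible well-quasi-order structure — e.g.\ one pulled back along a faithful embedding of $G$ into a finitely generated abelian group after reducing to a polycyclic normal subgroup of finite index.
\item \textbf{Algebraic.} P.\ Hall's theorem: $\Z[H]$ is left and right Noetherian whenever $H$ is polycyclic-by-finite. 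This applies to $H = G \wr \fS_n$, which inherits polycyclicity-by-finiteness from $G$ since $G^n \rtimes \fS_n$ is polycyclic-by-finite when $G$ is.
\end{enumerate}
Granting these, the ``leading term'' submodule of any sub-$\FI(G)$-module $A \subset P(n,\Z[G \wr \fS_n])$ stabilizes at a finite generating set, and the usual Gröbner division argument lifts this back to a finite generating set of $A$ itself.

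The hardest step will be (i): formalizing the wqo on $G$-labeled $\FI$-morphisms so that it interacts correctly with the $\Z[G \wr \fS_n]$-module structure. This is precisely the statement that $\FI(G)$ is a \emph{quasi-Gröbner category} in the sense of Sam--Snowden \cite{SS2}, and it requires a careful Higman's lemma argument applied to sequences whose entries carry $G$-labels. Without the polycyclic-by-finite hypothesis the algebraic input (ii) already fails — e.g.\ for $G$ a non-abelian free group, $\Z[G]$ is not Noetherian — so the hypothesis enters the proof in an essential way, both combinatorially (to ensure a sufficiently strong wqo on $G$) and algebraically (to ensure Noetherianity of the coefficient ring).
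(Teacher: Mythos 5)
Note first that the paper does not prove this theorem: the sentence immediately preceding it reads ``In \cite{SS2}, Sam and Snowden proved the following analogue,'' and no argument is supplied. So there is no in-paper proof to compare against; the question is whether your sketch is sound as a reconstruction of the Sam--Snowden argument it is modeled on.

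Your reduction to principal projectives and your algebraic input (ii) --- Hall's theorem giving Noetherianity of $\Z[G \wr \fS_n]$ --- are both correct, and the Gr\"obner framework is the right one. The gap is in step (i). You propose to extend Higman's lemma to $G$-labeled injections by equipping $G$ with a well-quasi-order pulled back from an embedding of a finite-index polycyclic subgroup into a finitely generated abelian group. This fails for two independent reasons. First, polycyclic groups need not embed into abelian groups at all: the discrete Heisenberg group is polycyclic with nontrivial commutators. Second, even for $G = \Z$ there is no usable wqo on the group: the discrete order has infinite antichains, the usual total order is not well-founded, and any genuine well-ordering of $\Z$ fails to be translation-invariant, so it is not compatible with the left-multiplication that the Gr\"obner division step must respect. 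Concretely, $\mr{Hom}_{\FI(G)}(G,G) \cong G$ would already have to be a wqo under your scheme, and for infinite $G$ it is not. Your closing sentence, which asserts that the polycyclic-by-finite hypothesis enters ``combinatorially (to ensure a sufficiently strong wqo on $G$),'' encodes exactly this misconception.

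The way to repair (i) is to drop the search for a wqo on $G$ entirely. The well-quasi-order (Higman) is placed only on the underlying $\FI$-shapes of the morphisms, exactly as in \cite{CEFN}, where the label set is finite and the lemma applies without modification. The $G^n$-label coordinate is instead treated as a \emph{coefficient} in the Noetherian ring $\Z[G^n]$, and one runs a Hilbert-basis-theorem-style induction: for each fixed leading $\FI$-shape, the coefficients appearing in elements of $A$ form an ascending chain of $\Z[G^n]$-submodules that must stabilize. Thus your two inputs are both genuinely present, but they act on disjoint pieces of the morphism datum --- combinatorics on the injection, ring theory on the label --- rather than being fused into a single wqo on $G$-labeled sequences. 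The polycyclic-by-finite hypothesis enters only through input (ii), not through the combinatorics at all.
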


By copying the proof of Theorem E of \cite{CEFN}, replacing $F(M)$ with $F^G(M)$, and invoking the Noetherian property of $\FI(G)$, i.e. Theorem \ref{FIGnoeth}, one gets the following proposition.

\begin{proposition}
\label{homfgG}
Let $M$ be a connected manifold of dimension at least $3$ with polycyclic-by-finite fundamental group $G$. Then for all $i$, $H^i(F^G(M);\Z)$ is finitely generated as an $\FI(G)$-module.
\end{proposition}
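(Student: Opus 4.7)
The plan is to imitate the proof of Theorem E of \cite{CEFN} essentially verbatim, with two substitutions: replace the category $\FI$ by $\FI(G)$ and the co-$\FI$-space $F(M)$ by the co-$\FI(G)$-space $F^G(M)$. The engine of that proof is a spectral sequence (built from the stratification of $M^k$ by the fat diagonal, equivalently a Leray spectral sequence for the open inclusion $F_k(M) \hookrightarrow M^k$) that converges to $H^*(F_k(M);\Z)$ and whose $E_2$-page is expressible in terms of $H^*(M;\Z)$ together with combinatorial data attached to partitions of $[k]$; Noetherianity of $\FI$-$\Z$-modules together with the $\FI$-analogue of Corollary~\ref{corssfg} then propagates finite generation on the $E_2$-page to finite generation of the abutment.

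In our setting, I would first observe that $F^G(M)(S)$ sits inside $\mr{Map}^G(S,\tilde{M})$, which is canonically a product of copies of $\tilde M$ indexed by the orbit set $S/G$, and that the complement is the equivariant fat diagonal, stratified by equivariant partial partitions of $S$ (equivalently, $G$-invariant equivalence relations on $S$ that refine the orbit decomposition). The corresponding Leray/stratification spectral sequence then has $E_2$-page expressible in terms of $H^*(\tilde M;\Z)$ and combinatorial data built from equivariant partitions of free $G$-sets. One verifies stratum by stratum that this $E_2$-page, organized with respect to varying $S \in \FI(G)$, assembles into finitely generated $\FI(G)$-$\Z$-modules in each bidegree: each stratum contributes (up to tensoring with a fixed finite piece of $H^*(\tilde M;\Z)$) a free $\FI(G)$-$\Z$-module on a single orbit type. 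With Theorem \ref{FIGnoeth} providing the Noetherian property for $\FI(G)$-$\Z$-modules, the $\FI(G)$-analogue of Corollary~\ref{corssfg} (which goes through unchanged, requiring only Noetherianity and closure of finite generation under extensions, subobjects and quotients) then propagates finite generation through higher pages to $H^i(F^G(M);\Z)$.

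The main obstacle will be the verification, at the level of the $E_2$-page, that the equivariant combinatorics of partitions of free $G$-sets still produces $\FI(G)$-$\Z$-modules that are finitely generated. Once one identifies each stratum with a free $\FI(G)$-orbit indexed by a fixed finite combinatorial datum (an equivariant partition shape), the rest — checking that functoriality in $\FI(G)$ is the expected one, that $H^*(\tilde M;\Z)$ enters only as a finitely generated abelian group coefficient, and that boundedness of the spectral sequence in each cohomological degree is preserved — is essentially bookkeeping that mirrors the $\FI$ case of \cite{CEFN} line by line.
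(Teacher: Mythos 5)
Your proposal follows the same route as the paper, whose proof of this proposition is essentially a one-line recipe: copy the proof of Theorem E of \cite{CEFN}, replacing $\FI$ by $\FI(G)$ and $F(M)$ by $F^G(M)$, and invoke the Noetherianity of $\FI(G)$-modules (Theorem \ref{FIGnoeth}). However, there is a genuine gap in the step you wave off as bookkeeping, namely the assertion that ``$H^*(\tilde M;\Z)$ enters only as a finitely generated abelian group coefficient.'' When $G=\pi_1(M)$ is infinite, $\tilde M$ is an infinite cover of $M$, and $H^i(\tilde M;\Z)$ is in general \emph{not} finitely generated as an abelian group (it is only finitely generated as a $\Z[G]$-module when $M$ has a finite CW structure, and for cohomology even that can fail). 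Concretely, take $M = (S^1 \times S^2) \setminus \{\mr{pt}\}$, a finite-type $3$-manifold with $\pi_1(M)\cong\Z$ polycyclic. Then $M \simeq S^1 \vee S^2$, so $\tilde M \simeq \bigvee_{\Z} S^2$ and $H^2(\tilde M;\Z) \cong \prod_{\Z}\Z$, which is not finitely generated as a $\Z[\Z]$-module (it is uncountable, while any finitely generated module over the countable ring $\Z[\Z]$ is countable). Since $F^G(M)$ evaluated on the single free orbit $G$ is exactly $\tilde M$, and a finitely generated $\FI(G)$-$\Z$-module must have its value at each object finitely generated over the group ring of that object's automorphism group, a finitely generated structure on $H^2(F^G(M);\Z)$ would force $H^2(\tilde M;\Z)$ to be finitely generated over $\Z[\mr{Aut}_{\FI(G)}(G)]=\Z[G]$, which fails here.

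So the ``stratum by stratum'' verification you defer does not reduce to the $\FI$ case line by line: the $E_2$-page entries involve $H^*(\tilde M^{S/G};\Z)$, and one would have to argue that these assemble into finitely generated $\FI(G)$-$\Z$-modules even though the individual groups are huge (or else rework the argument so that $\tilde M^{S/G}$ never appears as ambient space). The example above suggests the proposition as stated needs an extra hypothesis controlling $H^*(\tilde M;\Z)$, for instance that $M$ is aspherical so that $\tilde M$ is contractible. To be fair, the paper's own proof is the same bare citation and does not address this point; the gap you have is the one latent in the paper.
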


By \cite{SS2},  $\FI(G)$ satisfies all conditions in Remark \ref{remintegralreqs} when $G$ is polycyclic-by-finite. Thus one can generalize Theorem \ref{thmequivint} as follows.

\begin{proposition}\label{thmequivG}
Let $G$ be polycyclic-by-finite and $X$ be a $1$-connected semistrict co-$\FI(G)$-space. Then $H^i(X)$ is a finitely generated $\FI(G)$-$\Z$-module for all $i$ if and only if $\Homz{\pi_i(X)}$ and $\Extz{\pi_i(X)}$ are finitely generated $\FI(G)$-$\Z$-modules for all $i$.
\end{proposition}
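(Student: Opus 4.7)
The plan is to follow the proof of Theorem \ref{thmequivint} verbatim, replacing $\FI$ with $\FI(G)$ throughout, and invoking in place of each $\FI$-$\bZ$-module structural fact its $\FI(G)$-$\bZ$-module analogue. As Remark \ref{remintegralreqs} makes explicit, the entire argument of Section \ref{secmain} depends only on the following formal properties of finite generation for $\I$-$\bZ$-modules: preservation under tensor products, quotients, extensions and submodules; and the existence of a resolution of every finitely generated module by finitely generated levelwise-free projectives. Both ingredients are supplied for $\FI(G)$ when $G$ is polycyclic-by-finite by Sam and Snowden \cite{SS2}; the Noetherianness of $\FI(G)$-$\bZ$ is recalled in the excerpt as Theorem \ref{FIGnoeth}. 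I would begin by the trivial observation that all of Section \ref{secbasepoints} applies unchanged with $\I = \FI(G)$: nothing in the development of semistrict co-$\I$-spaces, their homotopy fibers, Postnikov and Whitehead towers, or the naturality of the Serre and Eilenberg-Moore spectral sequences uses anything about the indexing category beyond the fact that it is small.

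Next I would establish the $\FI(G)$-analogue of Proposition \ref{propemfg}: for a finitely generated $\FI(G)$-$\bZ$-module $F$ and any $n \geq 1$, $i \geq 0$, the $\FI(G)$-$\bZ$-modules $H^i(B^n\Homz{F})$ and $H^i(B^n\Extz{F})$ are finitely generated. The strategy of Proposition \ref{propemfg} goes through step by step. Split $F$ into its free and torsion parts (both finitely generated by Noetherianness); use Lemma \ref{lemcohfree} to identify $H^*(B\Homz{F[\mathrm{free}]})$ with the free exterior algebra, which remains finitely generated because tensor products of finitely generated $\FI(G)$-$\bZ$-modules are finitely generated; induct on $n$ via the geometric realization spectral sequence together with the K\"unneth formula, where the Noetherian property of $\FI(G)$-$\bZ$ controls both the kernel terms and the $\mr{Tor}$ terms that appear; and for the $\Extz{-}$ portion, use the finitely generated levelwise-free presentation of $F[\mathrm{tors}]$ coupled with the Rothenberg-Steenrod spectral sequence exactly as in Proposition \ref{propemfg}. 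The $\I$-analogue of Corollary \ref{corssfg} is then immediate from the preservation properties.

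With these tools in place, Lemmas \ref{whitehead} and \ref{lempostnikov} transfer without change: at each stage of the Whitehead (respectively Postnikov) tower, we use Proposition \ref{prophofibstruct} to assemble the tower into a semistrict co-$\FI(G)$-space, identify the cohomology of each successive fiber with that of an iterated classifying space via Proposition \ref{fiberisB}, and feed this into the Serre spectral sequence of $\FI(G)$-$\bZ$-modules. The proof then concludes by the same two arguments as Theorem \ref{thmequivint}: for the forward implication, invoke the universal coefficient theorem and the Hurewicz isomorphism to extract $\Homz{\pi_i(X)}$ from $H^i(W_i(X))$ and $\Extz{\pi_i(X)}$ as a submodule of $H^{i+1}(W_i(X))$, using the Noetherian property one last time; for the converse, use that $X \to P_n(X)$ induces an isomorphism on $H^i$ for $n \geq i+1$.

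The main obstacle is essentially clerical: one must keep careful track of which properties of $\FI$-$\bZ$-modules are used and verify that each is supplied for $\FI(G)$ by \cite{SS2}. The only substantive mathematical input beyond the Noetherian property is the existence of projective resolutions by finitely generated levelwise-free $\FI(G)$-$\bZ$-modules; this is what powers both the K\"unneth and Rothenberg-Steenrod steps in the $\Extz{-}$ part of the analogue of Proposition \ref{propemfg}. Once these are in hand, no new homotopy-theoretic or spectral-sequence work is required.
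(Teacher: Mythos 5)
Your proposal is correct and takes the same route the paper does: the paper's entire justification for Proposition \ref{thmequivG} is to cite Remark \ref{remintegralreqs} and assert that Sam--Snowden \cite{SS2} supply the required structural properties of $\FI(G)$-$\bZ$-modules when $G$ is polycyclic-by-finite. You have usefully unpacked that assertion, tracing the two ingredients (preservation of finite generation under tensor/quotient/extension/submodule, and the existence of finitely generated levelwise-free projective resolutions powering the $\FI(G)$-analogue of Proposition \ref{propemfg}) through the Whitehead and Postnikov arguments, and correctly observing that nothing in Section \ref{secbasepoints} depends on the indexing category beyond smallness -- but the underlying argument is identical to the paper's.
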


Combining Propositions \ref{homfgG} and \ref{thmequivG}, we get the following. 

\begin{theorem} \label{stabG}
Let $M$ be a connected manifold of dimension at least $3$ with polycyclic-by-finite fundamental group $G$. For $i \geq 2$, $\Homz{\pi_i(C_k(M))}$ and $\Extz{\pi_i(C_k(M))}$ are finitely generated as $\FI(G)$-$\bZ$-modules.
\end{theorem}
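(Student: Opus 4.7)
The plan is to apply Propositions \ref{homfgG} and \ref{thmequivG} to the co-$\FI(G)$-space $F^G(M)$, in direct analogy with how Theorems \ref{config} and \ref{configint} were deduced in Subsection \ref{subconfig} from Theorem \ref{homconf} and Theorem \ref{thmequivint}. Since both ingredients are already in hand in the excerpt, the theorem amounts to chaining them together and then identifying the output with the dual homotopy groups of unordered configuration spaces.

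First I would check that $F^G(M)$ fits the hypotheses of Proposition \ref{thmequivG}. The semistrict co-$\FI(G)$-space structure is immediate: each $G$-equivariant injection $X \hookrightarrow Y$ of free $G$-sets induces a restriction map $\mr{Emb}^G(Y,\tilde M) \to \mr{Emb}^G(X,\tilde M)$, and these maps compose strictly. The $1$-connectedness is exactly the proposition preceding Theorem \ref{FIGnoeth}: the value $F^G(M)(G^k) \cong \widetilde{C_k(M)}$ is a universal cover and hence simply connected. Finite type follows from our standing assumption on $M$ together with the observation that each $\widetilde{C_k(M)}$ has finite-type homotopy groups.

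Next, Proposition \ref{homfgG} tells us that $H^i(F^G(M); \bZ)$ is a finitely generated $\FI(G)$-$\bZ$-module for every $i$. Feeding this into Proposition \ref{thmequivG} yields that $\Homz{\pi_i(F^G(M))}$ and $\Extz{\pi_i(F^G(M))}$ are finitely generated $\FI(G)$-$\bZ$-modules for every $i$. For $i \geq 2$, the covering map $\widetilde{C_k(M)} \to C_k(M)$ induces an isomorphism of homotopy groups, so $\pi_i(F^G(M)(G^k)) \cong \pi_i(C_k(M))$, and the conclusion of the theorem follows by transport.

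The only step that requires genuine attention is verifying that this last isomorphism respects the $\FI(G)$-module structures: the structure on the left is constructed via the base-point formalism of Section \ref{secbasepoints} applied to the semistrict co-$\FI(G)$-space $F^G(M)$, while on the right one wants the action induced by the natural maps between universal covers, equivalently the wreath-product $\fS_k \wr G$ action on $\pi_i(C_k(M))$ restricted along an inclusion. Matching these is mostly bookkeeping, handled in the same spirit as the identification $\pi_i(C_k(M)) \cong \pi_i(F_k(M))$ used in the simply connected case at the end of Subsection \ref{subconfig}, but it is the one place where genuine care is needed.
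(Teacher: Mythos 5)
Your proposal is correct and matches the paper's own route: the paper simply states that Theorem \ref{stabG} follows by combining Propositions \ref{homfgG} and \ref{thmequivG}, exactly as you outline. The additional care you flag — checking $1$-connectedness of $F^G(M)$ via the preceding proposition and verifying that the $\FI(G)$-module structures on the two sides of the covering-space isomorphism agree — is implicit in the paper's terse statement, and you have correctly identified it as the only substantive bookkeeping to be done.
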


When working with rational coefficients and $G$ finite, we believe that finite generation is equivalent to some version of representation stability and that a quantitative version of Theorem \ref{stabG} holds.

\subsection{Other approaches for configuration spaces}\label{subconfother}

We will now discuss related results, which can be adapted to deduce particular cases of the results in the previous two sections.

\subsubsection{Work of Cohen and Gitler} Cohen and Gitler have explicitly determined the rational homotopy groups of certain configuration spaces. Theorem 2.3 of \cite{cohengitler} says that the primitive elements of $H_*(\Omega F_k(\bR^n))$ can be identified with a graded Lie algebra $\mathcal L_k(n-2)$, obtained as the quotient of a direct sum of free Lie algebras by infinitesimal braid relations. Using the Milnor-Moore theorem, this identifies the rational homotopy groups and from their description one can read off finite generation. More recently, Berglund gave an elegant approach to this calculation using Koszul algebras (see Example 32 of  \cite{berglundkoszul}). Cohen and Gitler's results extend to more general classes of manifolds, in particular $1$-connected $p$-manifolds of dimension at least 3 (Theorem 2.4 of \cite{cohengitler}). A $p$-manifold is a manifold obtained by removing a point from some manifold.

\subsubsection{Work of Levitt}In Corollary 1.4 of \cite{levitt}, Levitt shows that if $M$ is (i) non-compact and connected, or (ii) is compact and has Euler characteristic zero, we have an isomorphism \[\pi_i(F_k(M)) \cong \bigoplus_{0 \leq j \leq k-1} \pi_i(M \backslash \{j \text{ pts}\})\]

This manifestly breaks the $\fS_k$-symmetry. However, in special circumstances we can still recover information from it. In particular in case (i), we have $M \backslash \{j \text{ pts}\} \simeq M \vee \bigvee_{j} S^{\dim M-1}$, so one might be able to use this to bound the number of generators of $\pi_i(F_k(M))$ by a polynomial. If $M$ were additionally $1$-connected, such a bound implies finite generation, as the homotopy groups of ordered configuration spaces of non-compact manifolds are $\FI\#$-modules and we can thus apply Theorem 4.1.7 of \cite{CEF}.

\begin{example}One can prove finite generation integrally for non-compact manifolds homotopy equivalent to wedges of spheres of dimension $\geq 2$ using Hilton's theorem \cite{hilton}, which determines the homotopy groups of a wedge of spheres in terms of the unstable homotopy groups of spheres and a basis for the free Lie algebra.\end{example}

\begin{example}One can prove finite generation over $\Q$ for any non-compact $1$-connected $M$, using Quillen's rational homotopy theory \cite{quillenrat}. Quillen's theory says that every pointed $1$-connected space $X$ of finite type has a minimal dg-Lie algebra model $\mathcal L(X)$, whose homology groups are the rational homotopy groups of $\Omega X$. For example, an $n$-sphere has as minimal dg-Lie algebra model the free graded Lie algebra $\mathbb L[e_{n-1}]$ on a generator of degree $n-1$. A dg-Lie algebra model for $X \vee Y$ is given the coproduct $\mathcal L(X)*\mathcal L(Y)$ of the minimal models of $X$ and $Y$. Thus it suffices to note that in fixed degree the dimension of $\mathcal L(M) * \mathbb L[e_{n-1}^1,\ldots,e_{n-1}^k]$ is polynomial in $k$. 

In fact, expanding on this approach allows one to prove Theorem \ref{config} without explicit ranges, by the following reduction from the closed to the non-compact case. There is a shift map $\mr{sh}: \FI \to \FI$ that sends a finite $S$ to $S \sqcup *$. To prove that an $\cat{FI}$-$R$-module $F$ is finitely generated, it suffices to prove that $F \circ \mr{sh}$ is finitely generated. In the case of configuration spaces -- that is, $F = H^i(F_k(M);\bQ)$ -- the $\FI$-module $F \circ \mr{sh}$ in degree $k$ consists of the $i$th rational cohomology group of $F_{k+1}(M)$, but we consider one point as being labeled by the special element $*$. Remembering just that special point gives a fibration 
\[F_k(M \backslash \{\mr{pt}\}) \to F_{k+1}(M) \to M\]
This induces a long exact sequences of $\FI$-modules on dual rational homotopy groups, from which one computes the weight and stability degree of the $\FI$-modules $\mr{Hom}_\bZ(\pi_i(F_k(M)),\bQ)$ composed with $\mr{sh}$.\end{example}

\subsection{Subgroups of automorphism groups of manifolds} Let $G$ be a topological group that acts on $M$. Writing $F_k(M) = \mr{Emb}([k],M)$ one sees that the actions of $\fS_k$ and $G$ on $F_k(M)$ commute. Using this one concludes that
\[F_k(M) \to F_k(M) \times_G EG \to BG\]
is a fiber sequence of $\fS_k$-spaces, if the action of $\fS_k$ on $BG$ is set to be trivial. These fiber sequences assemble into a fiber sequence of co-$\FI$-spaces.

If we pick arbitrary base points $*_k$ in $F_k(M)$ and a base point $* \in EG$, we get a base point $*_k$ in $F_k(M) \times_G EG$ as the image of $(*_k,*)$. The base point in $EG$ gives a base point $*$ in $BG$ as well. Even though $F_k(M) \times_G EG$ might not be $1$-connected, the groups $\pi_i(F_k(M) \times_G EG,*_k)$ for $i \geq 1$ can still be given the structure of a co-$\FI$-$\Z$-module without any additional choices. To see this, note that for each injection $\tau: [k] \to [k']$ the map $\tau^*: F_k(M) \times_G EG \to F_k(M) \times_G EG$ preserve the fibers and these are $1$-connected.

\begin{corollary}Let $M$ be a $1$-connected manifold of dimension at least $3$.
\begin{enumerate}[(i)]\item If $\pi_i(BG,*)$ is a finitely generated abelian group for $1 \leq i \leq j$, then for $1 \leq i \leq j-1$ the groups $\Homz{\pi_i(F_k(M) \times_G EG,*_k)}$ and $\Extz{\pi_i(F_k(M) \times_G EG,*_k)}$ assemble to finitely generated $\FI$-$\bZ$-modules.
\item If $\pi_i(BG,*) \otimes \bQ$ is finite dimensional for $1 \leq i \leq j$, then for $1 \leq i \leq j-1$ the vector spaces $\Homq{\pi_i(F_k(M) \times_G EG,*_k)}$ assemble to $\FI$-$\bQ$-modules with weight $\leq 2(i-1)$ and stability degree $\leq 4(i-1)$.
\end{enumerate}
\label{groups}
\end{corollary}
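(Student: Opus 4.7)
The plan is to reduce to the $1$-connected setting and apply Theorem \ref{thmequiv}. Since the fiber $F_k(M)$ of $T_k := F_k(M) \times_G EG \to BG$ is $1$-connected, the long exact sequence gives an isomorphism $\pi_1(T_k) \cong \pi_1(BG)$, so $\pi_1(T_k)$ is a constant finitely generated $\FI$-$\bZ$-module and the case $i = 1$ is immediate in both parts. For $i \geq 2$, I would pull back the fibration along the universal cover $\widetilde{BG} \to BG$ to obtain a semistrict fiber sequence of co-$\FI$-spaces
\[F_k(M) \to \widetilde{T_k} \to \widetilde{BG},\]
with base points handled as in Section \ref{secbasepoints}. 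Here $\widetilde{T_k}$ is $1$-connected and satisfies $\pi_i(\widetilde{T_k}) \cong \pi_i(T_k)$ as co-$\FI$-$\bZ$-modules for $i \geq 2$.

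I would then estimate $H^*(\widetilde{T_k}; \bZ)$ via the Serre spectral sequence of $\FI$-$\bZ$-modules
\[E_2^{p,q} = H^p(\widetilde{BG}; H^q(F_k(M); \bZ)) \Rightarrow H^{p+q}(\widetilde{T_k}; \bZ).\]
Since $\widetilde{BG}$ is $1$-connected with $\pi_i(\widetilde{BG}) \cong \pi_i(BG)$ finitely generated abelian for $2 \leq i \leq j$, the constant-$k$ specialization of the Postnikov-tower argument of Lemma \ref{lempostnikov} shows that $H^p(\widetilde{BG}; \bZ)$ is finitely generated abelian for $p \leq j$. Combined with the finite generation of $H^q(F_k(M); \bZ)$ from Theorem \ref{homconf} and closure of finitely generated $\FI$-$\bZ$-modules under tensor products and $\mr{Tor}$ (Proposition \ref{propfgprops}), the $E_2$-page is a finitely generated $\FI$-$\bZ$-module in the region $p \leq j$. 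Corollary \ref{corssfg} then yields finite generation of $H^n(\widetilde{T_k}; \bZ)$ for $n \leq j$, and a ranged version of Theorem \ref{thmequivint}, obtained by running the inductions in Lemmas \ref{whitehead} and \ref{lempostnikov} up to the cutoff $j$, delivers part (i).

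Part (ii) follows by the same strategy over $\bQ$, while tracking quantitative bounds. The key observation is that tensoring with a constant $\FI$-$\bQ$-module $F$ with trivial $\fS$-action preserves both weight and stability degree exactly: this follows from the identification $\Phi_q(F \otimes G)_k = F_k \otimes \Phi_q(G)_k$, which sharpens Proposition \ref{proptensorrat} in this special case. Since $H^p(\widetilde{BG}; \bQ)$ is constant and finite-dimensional for $p \leq j$ under the rational hypothesis, and $H^q(F_k(M); \bQ)$ has weight and stability degree $\leq q$ by Theorem \ref{homconf}, the $E_2$-page has weight and stability degree $\leq q \leq p + q$; Proposition \ref{propextrat} then gives $H^n(\widetilde{T_k}; \bQ)$ weight and stability degree $\leq n$. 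Feeding these into Theorem \ref{thmequivrat}(i) with $c_1 = c_2 = 1$ produces the claimed bounds weight $\leq 2(i-1)$ and stability degree $\leq 4(i-1)$ on $\Homq{\pi_i(T_k)}$. The main technical obstacle will be rigorously establishing the ranged variants of Theorems \ref{thmequivint} and \ref{thmequivrat}, since they are stated under hypotheses in all degrees; however, the Whitehead- and Postnikov-tower inductions in Section \ref{secmain} consult cohomology only in a bounded range at each step, and the needed ranges can be read off directly from those arguments.
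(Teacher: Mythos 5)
Your argument is correct in outline but takes a genuinely different route from the paper's. The paper works directly with the long exact sequence of homotopy groups for $F_k(M) \to F_k(M) \times_G EG \to BG$, views it as an exact chain complex $C_*$ of co-$\FI$-$\bZ$-modules, and applies the hyperhomology spectral sequences for the functor $\Homz{-}$ (with derived functors $\Homz{-}$ and $\Extz{-}$). Exactness of $C_*$ forces the hyperhomology to vanish, so the derived duals of $\pi_*(F_k(M) \times_G EG)$ are controlled by differentials to and from the already-known finitely generated modules $\Homz{\pi_*(F_k(M))}$, $\Extz{\pi_*(F_k(M))}$ and their constant $BG$-counterparts; closure of finite generation under subobjects, quotients and extensions then finishes the integral case (and the same LES gives the quantitative rational bounds, since the $BG$-terms are constant $\FI$-$\bQ$-modules of weight and stability degree $0$). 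Crucially, Theorem~\ref{thmequiv} is invoked only once, on $F_k(M)$, where it has already been applied in Section~\ref{subconfig}. You instead pass to the pullback along $\widetilde{BG}\to BG$ to create a $1$-connected total space $\widetilde{T_k}$, compute $H^*(\widetilde{T_k})$ via the Serre spectral sequence, and then re-run Theorem~\ref{thmequivint}/\ref{thmequivrat} on $\widetilde{T_k}$. This works in principle, and your sharpening of Proposition~\ref{proptensorrat} for tensoring with a constant $\FI$-$\bQ$-module (which preserves stability degree exactly, not merely up to the generic loss) is a correct and useful observation. However, your route requires establishing ranged variants of Theorems~\ref{thmequivint} and~\ref{thmequivrat} — a nontrivial bookkeeping task you rightly flag as the main technical obstacle but do not carry out — whereas the paper's LES/derived-duality argument avoids recomputing cohomology of the total space entirely, and so sidesteps any need for ranged versions of the main theorem. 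The paper's approach is thus more modular and substantially shorter; yours would be preferable only if one did not already have finite generation of the derived dual homotopy groups of the fiber in hand.
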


\begin{proof}We prove the integral case, leaving the rational case to the reader. Consider again the fiber sequence of co-$\FI$-spaces
\[F_k(M) \to F_k(M) \times_G EG \to BG\]
The long exact sequences of homotopy groups combine into a long exact sequence of co-$\FI$-$\Z$ modules, which we think of as a chain complex $C_*$. Recall that $\Homz{-}$ and $\Extz{-}$ are the derived functors of $F:= \Homz{-}$. We can try to compute the hyperhomology $\mathbb H_*(F,C_*)$ of $F$ applied to $C_*$; this is by definition the result of resolving $C_*$ by injectives $I_{**}$, applying $F$ and taking total cohomology. We remark that a presheaf category with values in $\cat{Ab}$ has enough injectives, and these are furthermore objectwise injectives. Filtering $I_{**}$ in either direction gives rise to two hyperhomology spectral sequences
\begin{align*}E^2_{p,q} &= R^p F(H_q(C_*)) \Rightarrow \mathbb H_{p+q}(F,C_*) \\
E^1_{p,q} &= R^p F(C_q) \Rightarrow \mathbb H_{p+q}(F,C_*)\end{align*}

Since $C_*$ is exact, the first one tells use that the hyperhomology vanishes. This means that the second one converges to zero. Since $F = \Homz{-}$ only has non-zero derived functors $R^0 F= \Homz{-}$ and $R^1 F= \Extz{-}$, all differentials going into and out of $\Homz{\pi_*(F_k(M) \times_G EG)}$ and $\Extz{\pi_*(F_k(M) \times_G EG)}$ come from or go into the groups $\Homz{\pi_*(F_k(M))}$, $\Extz{\pi_*(F_k(M))}$, $\Homz{\pi_*(BG)}$ or $\Extz{\pi_*(BG)}$, all of which are finitely generated. This implies that $\Homz{\pi_*(F_k(M) \times_G EG)}$ and $\Extz{\pi_*(F_k(M) \times_G EG)}$ are finitely generated using that finite generation is preserved by taking subgroups, quotients and extensions.

Alternatively, an elementary proof can be given by splitting the long exact sequence into short exact sequences and remarking that every short exact sequence gives a six-term long exact sequence involving $\Homz{-}$ and $\Extz{-}$.\end{proof}

We fix an infinite sequence of distinct points $m_1,m_2, \ldots$ in $M$ and let $G_k$ denote the closed subgroup of $G$ fixing $m_1,\ldots,m_k$. Important examples come from those $G$ with the following two properties:
\begin{enumerate}[(i)]
\item $G$ acts $k$-transitively on $M$ for all $k \geq 1$, which means that the induced action on $F_k(M)$ is transitive for all $k \geq 1$. This implies that $G_k$ does not depend on the points $m_1,\ldots,m_k$ up to homeomorphism.
\item There are local sections to the quotient map $G \to G/G_k$. This implies that $F_k(M) \cong G/G_k$ and $F_k(M) \times_G EG \simeq BG_k$.
\end{enumerate}

Two examples of groups $G$ having these properties are given by $\mr{Diff}(M)$ and $\mr{Homeo}(M)$, defined as follows. If $M$ is a compact smooth manifold, $\Diff(M)$ is the group of diffeomorphisms of $M$ with the $C^\infty$ topology. If $M$ is a compact topological manifold, $\Homeo(M)$ is the group of homeomorphisms of $M$ with the compact-open topology. One could also consider PL-homeomorphisms of a PL-manifold by working with simplicial groups.

\begin{corollary}Let $M$ be a $1$-connected manifold of dimension at least $3$ and let $\mr{Aut}$ denote either $\Diff$ or $\Homeo$.
\begin{enumerate}[(i)]\item If $\pi_i(B\mr{Aut}(M),*)$ is a finitely generated abelian group for $1 \leq i \leq j$, then both sequences $\Homz{\pi_i(\mr{Aut}(M)_k,*_k)}$ and $\Extz{\pi_i(\mr{Aut}(M)_k,*_k)}$ are finitely generated $\FI$-$\bZ$-modules for $1 \leq i \leq j-1$.
\item If $\pi_i(B\mr{Aut}(M),*) \otimes \bQ$ is finite dimensional for $1 \leq i \leq j$, then $\Homq{\pi_i(\mr{Aut}(M)_k,*_k)}$ are $\FI$-$\Q$-modules of weight $\leq 2(i-1)$ and stability degree $\leq 4(i-1)$ for $1 \leq i \leq j-1$.
\end{enumerate}
\label{diff}
\end{corollary}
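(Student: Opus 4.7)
The plan is to reduce Corollary \ref{diff} to Corollary \ref{groups} applied with $G = \mr{Aut}(M)$, and then to translate the resulting statements about $\pi_*(F_k(M) \times_G EG)$ into statements about $\pi_*(\mr{Aut}(M)_k)$. For this I must verify, for both $G = \Diff(M)$ and $G = \Homeo(M)$, the two hypotheses listed just before the corollary: that $G$ acts $k$-transitively on $M$ for every $k \geq 1$, and that the quotient map $G \to G/G_k$ admits local sections. Granted these, the orbit map identifies $F_k(M)$ with $G/G_k$, and the Borel construction $F_k(M) \times_G EG$ is a model for $BG_k = B\mr{Aut}(M)_k$.

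The $k$-transitivity I would obtain from the isotopy extension theorem. Two ordered $k$-tuples of distinct points in $M$ (which is connected and of dimension $\geq 3$, so certainly $\geq 2$) can first be joined by a path in $F_k(M)$, i.e.\ by a continuous family of embeddings $[k] \hookrightarrow M$; isotopy extension in the smooth category for $\Diff$, and in the topological category of Edwards--Kirby for $\Homeo$, then upgrades such a path to a compactly supported ambient isotopy of $M$ whose time-one value is an element of $\mr{Aut}(M)$ carrying one tuple to the other.

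The local-section property is classical in the smooth case: Palais's theorem asserts that for any compact submanifold $N \subset M$ the restriction map $\Diff(M) \to \mr{Emb}(N,M)$ is a locally trivial principal bundle with structure group the subgroup fixing $N$ pointwise, and applying this with $N = \{m_1,\ldots,m_k\}$ gives exactly what we need. In the topological case one invokes the local contractibility of $\Homeo(M)$ due to Edwards and Kirby to produce analogous local sections of $\Homeo(M) \to F_k(M)$. I expect this to be the main obstacle: although the statements are standard in the literature, the topological case is noticeably more delicate than the smooth one, and some care is required to convert local contractibility, together with the transitive action, into the genuine principal-$G_k$-bundle structure needed to invoke the Borel-construction formalism of Corollary \ref{groups}.

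With both properties established, Corollary \ref{groups} applied to $G = \mr{Aut}(M)$ gives finite generation of $\Homz{\pi_i(B\mr{Aut}(M)_k,*_k)}$ and $\Extz{\pi_i(B\mr{Aut}(M)_k,*_k)}$ as $\FI$-$\bZ$-modules in the claimed range, and part (i) follows via the natural isomorphism $\pi_i(BH,*) \cong \pi_{i-1}(H,*)$ for a topological group $H$. Part (ii) is the same argument using the quantitative half of Corollary \ref{groups} in place of the integral one, and reads off the stated bounds $\leq 2(i-1)$ for the weight and $\leq 4(i-1)$ for the stability degree.
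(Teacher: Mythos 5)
Your approach is essentially the same one the paper implicitly takes: the paper states conditions (i) $k$-transitivity and (ii) local sections of $G \to G/G_k$ in the paragraph preceding the corollary, asserts without proof that $\Diff(M)$ and $\Homeo(M)$ satisfy them (so that $F_k(M) \cong G/G_k$ and $F_k(M) \times_G EG \simeq BG_k$), and then presents Corollary \ref{diff} as an immediate specialization of Corollary \ref{groups}. You correctly identify this route, and you fill in what the paper omits: isotopy extension for $k$-transitivity, Palais for local sections in the smooth category, and Edwards--Kirby in the topological category. That supplementary detail is accurate and welcome, though you should add that the paper's definitions require $M$ compact for $\Diff(M)$ and $\Homeo(M)$ to carry the given topologies.

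One point worth flagging: you close by invoking $\pi_i(BH,*) \cong \pi_{i-1}(H,*)$ to pass from the Borel construction to $\mr{Aut}(M)_k$, but you do not track what this does to the range. Corollary \ref{groups} gives finitely generated $\FI$-$\bZ$-modules $\Homz{\pi_i(F_k(M) \times_G EG,*_k)}$ and $\Extz{\pi_i(F_k(M) \times_G EG,*_k)}$ for $1 \leq i \leq j-1$, i.e.\ for $\pi_i(B\mr{Aut}(M)_k)$ in that range. Applying the shift $\pi_i(BH) \cong \pi_{i-1}(H)$ produces statements about $\pi_i(\mr{Aut}(M)_k)$ only for $0 \leq i \leq j-2$, not $1 \leq i \leq j-1$ as the corollary claims. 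This mismatch appears to originate in the paper itself (the remark following Corollary \ref{diff} asserts the isomorphism with the index shift reversed, and the $*_k$ notation in the corollary statement looks like a remnant of $B\mr{Aut}(M)_k$), so your proof faithfully reproduces the ambiguity rather than introducing a new error; but you should either present the conclusion for $B\mr{Aut}(M)_k$ in the range $1 \leq i \leq j-1$ and then shift, or explicitly state the degree-shifted range for $\mr{Aut}(M)_k$.
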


\begin{remark}The diffeomorphism and homeomorphism groups of $1$-connected manifolds of high dimension are known to have finite dimensional rational homotopy groups in a range. This was proven for spheres in \cite{FHs} and for general manifolds in \cite{burghelea}. Also we remark that Hatcher's proof of Smale's conjecture \cite{SmaleConjecture} implies that $\pi_i(\Diff(S^3),\mr{id})$ is finitely generated for all $i$.
\end{remark}

\begin{remark}Since $\pi_{i}(B\mr{Aut}(M)_k,*) \cong \pi_{i+1}(\mr{Aut}(M)_k,\mr{id})$, Corollary \ref{diff} applies equally well to $\mr{Aut}(M)_k$. Let $\mr{Aut}(M)_{(n)}$ be the subgroup of $\mr{Aut}(M)$ fixing $\{m_1, \ldots, m_n\}$ set-wise but not necessarily point-wise. Since $B\mr{Aut}(M)_n \to B\mr{Aut}(M)_{(n)}$ is a cover, the higher homotopy groups of $B\mr{Aut}(M)_{(n)}$ also exhibit representation stability. \end{remark}

\begin{remark}In \cite{JR}, Jimenez Rolland proved that $H^*(B\Diff(M)_k;\bQ)$ has representation stability under the condition that $H^*(B\Diff(M);\bQ)$ is finite dimensional in each degree. We discussed above that the spaces $B\Diff(M)_k$ assemble to a co-$\FI$-space. Therefore, Corollary \ref{diff} would follow from her work and Theorem \ref{thmequiv} in the case that each $B\Diff(M)_k$ is $1$-connected. We do not know of any example where this is the case, though there are examples for manifolds with boundary, e.g. $B\mr{Diff}(D^3;\partial D^3)$.\end{remark}

\subsection{Products and bounded products}
In Section \ref{subconfig} we used representation stability for  cohomology groups to deduce representation stability for dual  homotopy groups. In this section, we reverse this, giving an example where it is easier to show dual homotopy groups stabilize than to show that the cohomology groups do. 

Given a space $Z$, we consider the co-$\FI\#$-space whose value on a finite set $S$ is $Z^S$. Representation stability for $\{H^*(Z^k;\bQ)\}_{k \geq 0}$ follows from Proposition 6.1.2 of \cite{CEF}. Thus the following is not a new result, but we still give a proof as an example of the dual-homotopy-to-cohomology direction of Theorem \ref{thmequiv}.

\begin{corollary}
Let $Z$ be a $1$-connected space of finite type.
\begin{enumerate}[(i)]
\item For $i \geq 0$, $\FI\#$-$\bZ$-modules $H^i(Z^k)$ are finitely generated. 
\item For $i \geq 0$, $\FI\#$-$\bQ$-modules $H^i(Z^k;\bQ)$ have weight $\leq i$.
\end{enumerate}
\end{corollary}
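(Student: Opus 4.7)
The plan is to apply the $\FI\#$-versions of Theorems~\ref{thmequivint} and~\ref{thmequivrat} to the co-$\FI\#$-space $Z^\bullet$ whose value on a finite set $S$ is the product $Z^S$ and whose structure map along a morphism $(A, B, \phi) \colon S \to T$ is the strict ``project-and-pad'' map $Z^T \to Z^S$ that copies the coordinate at $\phi(s) \in B$ into slot $s$ when $s \in A$ and inserts the basepoint of $Z$ otherwise. Since $Z$ is $1$-connected of finite type, each $Z^k$ is as well, so $Z^\bullet$ is a $1$-connected semistrict co-$\FI\#$-space of the type needed to feed into the main equivalence theorem.

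The central step is to identify the dual homotopy groups as $\FI\#$-modules. Using $\pi_i(Z^S) \cong \bigoplus_{s \in S} \pi_i(Z)$ and dualizing the co-$\FI\#$ structure, I would verify directly that
\[
\Homz{\pi_i(Z^\bullet)} \cong M(\Homz{\pi_i(Z)}) \quad \text{and} \quad \Extz{\pi_i(Z^\bullet)} \cong M(\Extz{\pi_i(Z)}),
\]
where in each case the finitely generated abelian group on the right sits in slot $1$ as a trivial $\fS_1$-module: the dual of ``project-and-pad'' sends the summand indexed by $s \in S$ to the summand indexed by $\phi(s) \in T$ when $s \in A$ and to zero otherwise, which is precisely the $M(-)$ structure map. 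In particular both $\FI\#$-$\bZ$-modules are generated in degree $\leq 1$ by finitely many elements and hence are finitely generated.

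Part (i) follows at once from the $\FI\#$-version of Theorem~\ref{thmequivint}. For part (ii), Proposition~\ref{propdegweight} implies that $\Homq{\pi_i(Z^\bullet)}$ has weight $\leq 1 \leq i$ for every $i \geq 1$, so the $\FI\#$-version of Theorem~\ref{thmequivrat}(ii) with $c = 1$ (where only weight is relevant) yields weight $\leq i$ for $H^i(Z^k;\bQ)$; the cases $i = 0, 1$ are handled trivially by $1$-connectedness, since $H^0(Z^k;\bQ) \cong \bQ$ has weight $0$ and $H^1(Z^k;\bQ) = 0$. The main obstacle is the bookkeeping in the second paragraph --- tracing the duality on the projection-and-padding maps and recognizing the resulting $\FI\#$-structure as an $M(-)$ --- which is routine but easy to slip on; once this identification is in place, both conclusions are formal consequences of the main equivalence theorem.
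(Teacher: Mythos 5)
Your proposal is correct and follows essentially the same route as the paper: identify $\pi_i(Z^\bullet)$ (or, in your phrasing, its derived duals) with an $M(-)$ module generated in degree $1$, note finite generation and weight $\le 1$, and then invoke the dual-homotopy-to-cohomology direction of Theorem~\ref{thmequiv}. The paper states this more tersely ("$\pi_i(Z^k)$ is a direct sum of $M(1)$'s... hence their duals have the same property," implicitly using Propositions~\ref{propfishdualz} and~\ref{propweightdualrat}), whereas you make the duality of the project-and-pad maps and the resulting $M(-)$ identification explicit, which is a harmless unpacking rather than a different argument.
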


\begin{proof}
Since $\pi_i(Z^k)=\oplus_k \pi_i(Z)$, $\pi_i(Z^k)$ is a direct sum of $M(1)$'s. Integrally these are finitely generated and rationally these have weight $\leq 1$, hence their duals have the same property. Now apply the homotopy-to-cohomology part of Theorem \ref{thmequiv}.
\end{proof}

\begin{remark}It follows that the sequence $H^*(Z^n;\bQ)$ has representation stability. Since $H^*(Z^n;\bQ)^{\fS_n} \cong H^*(Z^n/\fS_n;\bQ)$, this gives a new proof of rational homological stability for symmetric products \cite{Srod}. Using results of Kallel and Saihi, this implies a similar result in a range for bounded products. Fix an integer $c \geq 1$ and let $\Delta^c(Z,k)$ be the subspace of $Z^k$ of sequences of points such that no $c$-tuple coincides. For example, $\Delta^1(Z,k) = F_k(Z)$ and the quotient $\Delta^c(Z,k)/\fS_k$ is also known as the bounded symmetric product $\mr{Sym}^{\leq c}_k(Z)$. 

Let $Z$ be a space such that for any $z \in Z$ and open $U$ containing $z$ there is an open $V$ containing $z$ such that $V \backslash \{z\}$ is $r$-connected. In \cite{kalleldiagonal}, Kallel and Saihi show that the inclusion $\Delta^c(Z,k) \to Z^k$ is $(r+1)$-connected, so in that range our results for products extend to bounded products. In unpublished work, they improve this range to $rc + 2c-2$.\end{remark}

\subsection{Loop spaces of suspensions} 

One can also use the cohomology-to-dual-homotopy and dual-homotopy-to-cohomology in tandem. For example, Theorem \ref{thmequiv} has the following corollary for the cohomology and dual homotopy groups of loop spaces of suspensions of co-$\FI$-space.

\begin{corollary}\label{corloopsusp}
Let $X$ be an $(m-n)$-connected semistrict co-$\FI$-space of finite type.
\begin{enumerate}[(i)]
\item Suppose that $H^i(X)$ is a finitely generated $\FI$-$\bZ$-module for all $i$. Then the $\FI$-$\bZ$-modules $H^i(\Omega^m \Sigma^n X)$, $\Extz{\pi_i(\Omega^m \Sigma^n X)}$ and $\Homz{\pi_i(\Omega^m \Sigma^n X)}$ are finitely generated for all $i$. 
\item If $H^i(X;\Q)$ has weight $\leq c_1i$ and stability degree $\leq c_2i$ then $H^i(\Omega^m \Sigma^n X;\Q)$ has weight $\leq (m+2)c_1i$ and stability degree $\leq ((2m+3)c_1+(m+1)c_2)(2i+1)$, and $\Homq{\pi_i(\Omega^m \Sigma^n X)}$ has weight $\leq (m+2)c_1i$ and stability degree $\leq (m+1)(c_1+c_2)i$.
\end{enumerate} 
\end{corollary}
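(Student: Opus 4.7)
The plan is to use Theorem \ref{thmequiv} as a bridge in both directions, linked by two elementary operations: the suspension isomorphism $\tilde H^i(\Sigma^n X) \cong \tilde H^{i-n}(X)$ and the loop-shift identity $\pi_i(\Omega^m Y) = \pi_{i+m}(Y)$. Both are natural, so they transfer finite generation---and, rationally, weight and stability degree---essentially unchanged up to the relevant degree shift. The strategy is therefore: start with $H^*(X)$; use the suspension isomorphism to obtain the same control on $H^*(\Sigma^n X)$; apply Theorem \ref{thmequiv} in its cohomology-to-homotopy direction to get control on the dual homotopy groups of $\Sigma^n X$; shift the index by $m$ to obtain the dual homotopy groups of $\Omega^m \Sigma^n X$; and finally apply Theorem \ref{thmequiv} in its homotopy-to-cohomology direction to $\Omega^m \Sigma^n X$. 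The connectivity hypothesis on $X$ is exactly what makes $\Sigma^n X$, and hence $\Omega^m\Sigma^n X$, sufficiently connected for Theorem \ref{thmequiv} to apply at each stage; that all the intermediate constructions live over co-$\FI$ and are natural up to the base-point choices is a routine verification using Section \ref{secbasepoints}.

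The integral part (i) then follows immediately from Theorem \ref{thmequivint} applied twice. For the rational part (ii), at each use of Theorem \ref{thmequivrat} or Proposition \ref{proplooprat} the bounds acquire affine corrections of the form $c\,i+a$, which are converted back to linear bounds via the remark following Theorem \ref{thmequivrat}. The main obstacle is bookkeeping: a naive application of Theorem \ref{thmequivrat}(i) to $\Sigma^n X$ gives a stability-degree coefficient of the form $4c_1+2c_2$, which after the shift by $m$ and the affine-to-linear conversion is slightly weaker than the claimed $(m+1)(c_1+c_2)$. To recover the sharper coefficient one instead uses Proposition \ref{proplooprat} to bound the weight and stability degree of $H^*(\Omega\Sigma^n X;\bQ)$ by $2c_1$ and $2(c_1+c_2)$ per degree, and then uses the Milnor--Moore identification of $\Homq{\pi_{*+1}(Y)}$ with the indecomposables of $H^*(\Omega Y;\bQ)$, applying Proposition \ref{propkerimrat} to pass the stability-degree bound through the quotient defining indecomposables. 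This avoids the tensor-product factor that inflates the coefficient in Theorem \ref{thmequivrat}(i) and yields precisely weight $\leq (m+2)c_1 i$ and stability degree $\leq (m+1)(c_1+c_2)i$ on $\Homq{\pi_i(\Omega^m\Sigma^n X)}$ after shifting by $m$ and linearizing. With this intermediate bound in hand, the cohomological bound $((2m+3)c_1+(m+1)c_2)(2i+1)$ on $H^i(\Omega^m\Sigma^n X;\bQ)$ follows by direct substitution into Theorem \ref{thmequivrat}(ii).
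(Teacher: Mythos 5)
Your overall strategy — suspend, apply Theorem \ref{thmequiv} in the cohomology-to-homotopy direction, shift by $m$ for the loops, linearize, and finally apply Theorem \ref{thmequiv} in the homotopy-to-cohomology direction — is exactly the paper's approach, and part (i) goes through as you describe. The problem is in how you propose to get the sharper stability-degree coefficient $(m+1)(c_1+c_2)$ in part (ii).

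You write that one can ``apply Proposition \ref{propkerimrat} to pass the stability-degree bound through the quotient defining indecomposables'' and thereby avoid the tensor-product factor. This is not what Proposition \ref{propkerimrat} does. Its part (i) bounds the stability degree of $\ker(g)/\mathrm{im}(f)$ in a complex $F \to G \to H$ only under the hypothesis that \emph{all three} of $F$, $G$, $H$ have stability degree $\leq r$. For the indecomposables $\mathrm{coker}\bigl(A_+^{\otimes 2} \to A\bigr)$ with $A = H^*(\Omega\Sigma^n X;\bQ)$, the relevant complex is $A_+^{\otimes 2} \to A \to 0$, so you must bound the stability degree of $A_+^{\otimes 2}$ as well, and in degree $i$ that is precisely governed by the tensor-product formula, not by the stability degree of $A$ alone. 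Concretely, quotients of $\FI$-$\bQ$-modules do \emph{not} inherit stability degree bounds: exactness of $\Phi_q$ gives surjectivity of $T: \Phi_q(A/A_+^2)_k \to \Phi_q(A/A_+^2)_{k+1}$ when $T$ is an isomorphism on $\Phi_q(A)$, but injectivity can fail unless $T$ is also surjective on the $\Phi_q$ of the submodule being quotiented out — which is exactly the control on $A_+^{\otimes 2}$ you were hoping to avoid. So the tensor-product penalty in the stability degree of $\mr{Hom}_\bZ(\pi_{i+1}(-),\bQ)$ cannot be removed by this device, and your derivation of the coefficient $(m+1)(c_1+c_2)$ does not go through as written. (Note that the weight bound is fine: weight \emph{is} preserved by quotients, so no tensor factor appears there.)
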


\begin{proof}The proof is a repeated application of Theorem \ref{thmequiv} and the fact that $\Sigma$ shifts homology groups and $\Omega$ shifts homotopy groups. In the rational case, we note that $H^i(\Sigma^n X;\bQ)$ still has weight $\leq c_1i$ and stability degree $\leq c_2i$, though one can give better affine ranges. From this we conclude that the dual homotopy groups have weight $\leq c_1i$ and stability degree $\leq 2(c_1+c_2)(i-1)$. Looping $m$ times proves that the dual homotopy groups of $\Omega^m \Sigma^n X$ have weight $\leq 2c_1(i+m)$ and stability degree $\leq 2(c_1+c_2)(i+m-1)$. This gives linear ranges as follows: weight $\leq (m+2)c_1i$ and stability degree $\leq (m+1)(c_1+c_2)i$. Finally, going back to cohomology gives weight $\leq 2(m+2)c_1i$ and stability degree $\leq ((2m+3)c_1+(m+1)c_2)(2i+1)$. \end{proof}

\subsection{Algebraic examples}
\label{algebra}
Finally, we give some algebraic consequences. Any contravariant functor from the category of finite sets with all set maps to the category of $R$-modules is also an $\FI\#$-$R$-module. The first example we consider is $\mathcal G^{d,m}$, the functor which sends a set to dual of the free graded rational Gerstenhaber algebra on the set. Here the $d$ is the degree of the generators and $m-1$ is the degree of the Gerstenhaber bracket. Note that for $m=1$, this is isomorphic as a graded vector space to the free associative algebra. Let $\mathcal L^{d,m}$ as the free graded rational Lie algebra functor on generators of degree $d$ and bracket of degree $m-1$. Let $\mathcal G^{d,m,l}$ and $\mathcal L^{d,m,l}$ denote their $l$th graded pieces. Note that representation stability for free Lie algebras was originally proven in Section 5.3 of \cite{CF}, but without an explicit stability range. 

\begin{corollary}Take $d \geq 2$. 
\begin{enumerate}[(i)]
\item The $\FI\#$-$\bQ$-module $\mathcal G^{d,m,l}$ has weight $\leq \frac{m+2}{m+d}l$.
\item The $\FI\#$-$\bQ$-module $\mathcal L^{d,m,l}$ has weight $\leq \frac{m+2}{m+d}l$
\end{enumerate} 
\end{corollary}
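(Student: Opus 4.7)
The strategy is to realize $\mathcal G^{d,m,l}$ and $\mathcal L^{d,m,l}$ simultaneously as the rational cohomology and dual rational homotopy of the space $\Omega^m X$ for a suitable co-$\FI\#$-space $X$, and then bound their weight directly via Corollary \ref{corloopsusp}.

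I take $X$ to be the semistrict co-$\FI\#$-space whose value on a finite set $S$ is the based wedge $X_S = \bigvee_S S^{d+m}$, with an $\FI\#$-morphism $(A,B,\phi)\colon S\to T$ acting by collapsing to the basepoint every wedge summand of $X_T$ not indexed by $B$ and relabeling via $\phi^{-1}$. Since $d\geq 2$, each $X_S$ is $(d+m-1)$-connected with $d+m-1\geq m+1$, which ensures both that $\Omega^m X$ is levelwise $1$-connected and that the connectivity hypothesis of Corollary \ref{corloopsusp} with $n=0$ is satisfied. The reduced cohomology $\tilde H^*(X;\bQ)$ is concentrated in degree $d+m$, where it is the permutation $\FI\#$-$\bQ$-module $M(\bQ)$ (the module $M(V)$ with $V$ the trivial $\fS_1$-representation), of weight $1$ and stability degree $1$. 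Hence $H^i(X;\bQ)$ has weight $\leq c_1 i$ and stability degree $\leq c_2 i$ with $c_1=c_2=\tfrac{1}{d+m}$.

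Applying Corollary \ref{corloopsusp}(ii) with $n=0$ then yields that both $H^l(\Omega^m X;\bQ)$ and $\Homq{\pi_l(\Omega^m X)}$ are $\FI\#$-$\bQ$-modules of weight $\leq (m+2)c_1 l = \tfrac{m+2}{m+d}\,l$. What remains is the natural-in-$S$ identification of these modules with $\mathcal G^{d,m,l}$ and $\mathcal L^{d,m,l}$. Since $X_S\simeq \Sigma^m\bigvee_S S^d$ naturally in $S$, we have $\Omega^m X_S\simeq \Omega^m\Sigma^m\bigvee_S S^d$, and the Cohen--Lada--May description of the rational homology of iterated loop spaces identifies $H_*(\Omega^m\Sigma^m W;\bQ)$ with the free $e_m$-algebra on $\tilde H_*(W;\bQ)$. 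For $W_S=\bigvee_S S^d$ the input is $\bQ[S]$ concentrated in degree $d$, so we obtain the free graded Gerstenhaber algebra on $S$ with generators of degree $d$ and bracket of degree $m-1$; dualizing yields $H^l(\Omega^m X;\bQ)\cong \mathcal G^{d,m,l}$. By the Milnor--Moore theorem as recalled in the excerpt, $\pi_*(\Omega^m X)\otimes\bQ$ is the Lie algebra of primitives of this Hopf algebra, and by the operadic PBW theorem for $e_m$-algebras this Lie algebra is exactly the free graded Lie algebra with bracket of degree $m-1$ on the same generators; combining with self-duality of rational $\fS_n$-representations (Proposition \ref{propweightdualrat}) gives $\Homq{\pi_l(\Omega^m X)}\cong \mathcal L^{d,m,l}$ with matching weight.

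The main potential obstacle is verifying that the Cohen--Lada--May and Milnor--Moore identifications are genuinely natural with respect to $\FI\#$-morphisms, rather than merely with respect to arbitrary pointed maps: the collapse-and-relabel construction on $\bigvee_S S^{d+m}$ must be shown to induce on primitives and indecomposables the expected $\FI\#$-action on the free Gerstenhaber and Lie algebra functors. This is routine given functoriality of both constructions in pointed maps of wedges of spheres, but should be spelled out explicitly. The remaining quantitative work — the affine-to-linear conversion that produces the factor $m+2$ — is already encapsulated in the statement of Corollary \ref{corloopsusp}.
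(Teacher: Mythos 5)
Your proof is correct and essentially the same as the paper's: both take $X$ to be the co-$\FI\#$-space of wedges of $(d+m)$-spheres, observe that $H^*(X;\bQ)$ is $M(0)\otimes\bQ$ in degree $0$ and $M(1)\otimes\bQ$ in degree $d+m$ (hence weight $\leq \tfrac{i}{m+d}$), apply Corollary~\ref{corloopsusp} with $n=0$, and identify $H^l(\Omega^m X;\bQ)$ and $\Homq{\pi_l(\Omega^m X)}$ with the duals of $\mathcal G^{d,m,l}$ and $\mathcal L^{d,m,l}$ via Proposition~\ref{propweightdualrat}. The only differences are that you spell out the Cohen--Lada--May and Milnor--Moore identifications and the $\FI\#$-naturality caveat more explicitly than the paper does, which is a reasonable expository addition rather than a change in strategy.
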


\begin{proof}
Let $\mathcal S^{d+m}$ be the $(d+m)$-fold suspension functor viewed as co-$\FI\#$-space. That is $\mathcal S^{d+m}([n]) =  \bigvee_{i=1}^n S^{d+m}$. This co-$\FI\#$-space is relevant because $H^l(\Omega^m S^{d+m}([n]);\bQ)$ is the dual of $\mathcal G^{d,m,l}_n$ and $\Homq{\pi_l(\Omega^m S^{d+m}[n])}$ is the dual of $\mathcal L^{d,m,l}_n$. The restriction $d \geq 2$ is guarantee that $\Omega^m S^{d+m}[n]$ is $1$-connected.

Note that $H^i(\mathcal S^{d+m}_n) = 0$ for $i \neq 0$ or $d+m$. As representations of $\fS_n$, $H^0(\mathcal S^{d+m}_n;\bQ)$ is the trivial representation and $H^m(\mathcal S^{d+m}_n;\bQ)$ is the permutation representation. In terms of $\FI$-modules, we have $H^0(\mathcal S^{d+m};\bQ) \cong M(0) \otimes \bQ$ and $H^{d+m}(\mathcal S^{d+m};\bQ) \cong M(1) \otimes \bQ$. By Proposition 3.2.4 of \cite{CEF}, the weight of $M(k)$ is $k$. Therefore $\mathcal S^{d+m}$ is a co-$\FI\#$ space with $H^i(\mathcal S^{d+m};\bQ)$ of weight $\leq \frac{i}{m+d}$. The corollary now follows from Corollary \ref{corloopsusp} and Proposition \ref{propweightdualrat}.\end{proof}

\begin{remark}
Using similar arguments for homology with $\F_p$-coefficients, one can prove finite generation for the graded pieces of the $\FI$-$\F_p$-modules obtained by sending a finite set $S$ to the unstable variants of the Dyer-Lashof algebra considered in \cite{CLM}, freely generated by $S$ and suitably graded. 
\end{remark}

\bibliographystyle{alpha}
\bibliography{kupersmiller}

\end{document}